\newtheorem{theorem}{Theorem}[section]
\newtheorem{lemma}[theorem]{Lemma}
\newtheorem{proposition}[theorem]{Proposition}
\newtheorem{remark}[theorem]{Remark}
\numberwithin{equation}{section}
\newtheorem{definition}[theorem]{Definition}
\newtheorem{example}[theorem]{Example}
\newtheorem{corollary}[theorem]{Corollary}
\begin{document}
	\title[non-ACM line bundles over cubic surfaces]
	{$\ell$-away aCM line bundles on a nonsingular cubic surface}
	
	\author[D. Bhattacharya, A. J. Parameswaran and J. Pine]{Debojyoti Bhattacharya, A.J. Parameswaran and Jagadish Pine}
	\address{School of Mathematics, Tata Institute of Fundamental Research, Homi Bhabha
		Road, Colaba, Mumbai-400005, India}
	\email{debojyoti7054@gmail.com; debojyot@math.tifr.res.in}
	\address{Kerala School of Mathematics, Kunnamangalam PO, Kozhikode-673571, Kerala, India}
	\email{param@math.tifr.res.in}
	\address{School of Mathematics, Tata Institute of Fundamental Research, Homi Bhabha
		Road, Colaba, Mumbai-400005, India}
	\email{jagadish@math.tifr.res.in}


	\begin{abstract}
		Let $X \subset \mathbb P^3$ be a nonsingular cubic hypersurface. Faenzi (\cite{F}) and later Pons-Llopis and Tonini (\cite{PLT}) have completely characterized ACM line bundles over $X$. As a natural continuation of their study in the non-ACM direction, in this paper, we completely classify $\ell$-away ACM line bundles (introduced recently by Gawron and Genc (\cite{GG})) over $X$, when $\ell \leq 2$. For $\ell\geq 3$, we give examples of $\ell$-away ACM line bundles on $X$ and  for each  $\ell \geq 1$, we establish the existence of smooth hypersurfaces $X^{(d)}$ of degree $d >\ell$ in $\mathbb P^3$ admitting $\ell$-away ACM line bundles.\
	\end{abstract}
	
	\keywords{Cubic surface; Line bundle; $\ell$-away ACM bundle}
	
	\subjclass[2020]{Primary 14J60; Secondary 14J45 · 14F05}
	
	\maketitle
	
	\section{Introduction}
	A celebrated theorem due to Grothendieck states that every vector bundle on $\mathbb P^1$ splits into direct sum of line bundles (see \cite{okonek}, Theorem $2.1.1$ and section $2.4$ for historical remarks). This does not generalize to projective spaces of dimension at least $2$. Indeed a seminal classification result due to Horrocks states that a vector bundle $E$ over $\mathbb P^n$ ($n \geq 2$) splits if and only if its intermediate cohomologies vanish i.e. $H^i(\mathbb P^n, E(k))=0$ for all $k \in \mathbb Z$ and $0 <i<n$ (see \cite{okonek}, Theorem $2.3.1$). Although  Knörrer's theorem (see \cite{quadric} or \cite{book}, Theorem $2.3.4$) asserts that even this does not generalize to arbitrary smooth projective varieties (e.g. on smooth quadric hypersurfaces $Q_n \subset \mathbb P^{n+1}, n \geq 3$), Horrocks splitting criteria motivates the definition of ACM bundles as follows: Let $X$ be a smooth projective variety and $H$ be a very ample divisor on $X$ giving the embedding  $X \subset \mathbb P^n$. Then a vector bundle $E$ on $X$ is said to be ACM with respect to $H$ if $H^i(X, E \otimes \mathcal O_X(H)^{\otimes t}) =0$ for all $t \in \mathbb Z$ and $0 < i < \text{dim}(X)$. The folklore guiding philosophy here is that `simple' varieties should support `simple' category of ACM bundles. Consequently, in the past two decades, considerable efforts have been driven towards the study of ACM bundles and Ulrich bundles (which are ACM bundles whose minimal sets of generators have the largest possible cardinality) over a smooth polarized projective variety. See \cite{book}, \cite{Beauville}, \cite{Coskun} and \cite{story} for a comprehensive survey.\
	
	As a first step in the non-ACM direction, it is therefore natural to now turn our attention to the non-ACM bundles on  a smooth polarized projective variety in the sense of Gawron and Genc, (see \cite{GG}, definition $1.1$) defined as follows: Let $X$ be a smooth projective variety and $H$ be a very ample divisor on $X$ giving the embedding  $X \subset \mathbb P^n$. Then a vector bundle $E$ on $X$ is said to be $\ell$-away ACM with respect to $H$ if there are exactly $\ell$ pairs $(i,t) \in \mathbb Z^{2}$ with $0 <i <\text{dim}(X)$ such that $H^i(X, E \otimes \mathcal O_X(H)^{\otimes t}) \neq 0$. With this notation, let $S(E):=\{(i,t) \in \mathbb Z^2 \hspace{1.5mm} | \hspace{1.5mm} 0 <i <\text{dim}(X), H^i(X, E \otimes \mathcal O_X(H)^{\otimes t}) \neq 0  \}$ and $\ell (E):=\# S(E)$. By Serre vanishing and Serre duality, it follows that $S(E)$ is a finite set and thus every vector bundle $E$ on a smooth polarized projective variety is $\ell(E)$-away ACM for some non-negative integer $\ell(E)$. When $E$ is clear from the context, we simply write $\ell$-away ACM instead of $\ell(E)$-away ACM. For $\ell=0$, one recovers the definition of ACM bundles and the interest lies in the case of $\ell$-away ACM bundles with low values of $\ell$. In the present paper, we take up the classification problem  when $\ell=1,2$. Note that, besides being a natural question of study in its own right,  the so called weakly Ulrich (Definition \ref{Wul}) and supernatural bundles (see \cite{GG}, Definition $3.7$) also serve as  crucial motivating factors to study $\ell$-away ACM bundles with $\ell>0$, since these two type of bundles can posses non-vanishing intermediate cohomology groups.\
	
	In the light of the above definition, let us now consider the non-ACM version of Horrocks theorem. Let $E$ be a rank $r \geq 2$, $\ell (>0)$-away ACM bundle on $\mathbb P^n (n \geq 2)$. Then certainly $E$ does not split. Therefore, it is natural to ask for a precise classification of $E$. The case $n=2, r=2$  was studied by Gawron and Genc in \cite{GG}. In our upcoming paper, in \cite{BPP}, we have considered the case of such bundles of higher rank on $\mathbb P^2$. We mention that, in \cite{GG}, the authors also studied $\ell (>0)$-away ACM bundles on certain Fano surfaces other than $\mathbb P^2$.  \
	
	Now we focus our attention to the rank $1$ i.e. line bundle case in this context. If an ACM bundle  splits into a direct sum of line bundles, then the line bundles  appearing in the splitting are also ACM and conversely, extension of ACM line bundles are also ACM. Hence, the classification of ACM line bundles on a smooth projective variety is important. In literature, one can find several results on ACM line bundles over a fixed surface. For instance, when the surface is Del-Pezzo, Pons-Llopis and Tonini have classified ACM line bundles on them in terms of rational normal curves that they contain (see \cite{PLT}). Chindea has classified ACM line bundles over a complex polarized elliptic ruled surface (see \cite{Chindea}).  ACM bundles on a smooth quadric surface were completely classified by Knörrer (see \cite{k}). Faenzi classified ACM line bundles on a smooth cubic surface in $\mathbb P^3$ (see \cite{F}). Watanabe has classified ACM line bundles over a smooth quartic hypersurface in $\mathbb P^3$ (see \cite{quartic}), $K3$ surface (see \cite{K3}) and quintic surface in $\mathbb P^3$ (see \cite{Watanabe}). Bhattacharya (see \cite{Debu}) has studied ACM line bundles over hypersurfaces of degree $d \geq 6$ in $\mathbb P^3$ and obtained the classfication for $d=6$.\
	
	Note that in the non-ACM context, if an $\ell$-away ACM bundle decomposes into smaller rank bundles then each of the bundle appearing in the decomposition is atmost $\ell$-away ACM. Conversely, let $E_1$, $E_2$ be two vector bundles on a smooth polarized projective variety such that $E_1$ is $\ell(E_1)$-away ACM and $E_2$ is $\ell(E_2)$-away ACM. Then $E=E_1 \oplus E_2$ is $\ell(E)$ away ACM, where $\ell(E):=\ell(E_1) +\ell(E_2) - \# (S(E_1) \cap S(E_2))$. Therefore, a characterization of $\ell$-away ACM line bundle on a fixed polarized smooth projective variety is the starting point. Towards this, Gawron and Genc have completely classified $\ell$-away ACM line bundles on a smooth quadric hypersurface in $\mathbb P^3$ using Künneth’s theorem (see \cite{GG}, Theorem $4.1$) and in the same paper the authors have also studied $\ell$-away ACM line bundles on $\mathbb P^2$ blown up at atmost $3$  non-collinear points.  From \cite{GG}, it is known that the unique $1$-away ACM bundle of rank $2$ on $\mathbb{P}^2$ is the pushforward of a $1$-away ACM line bundle on a smooth quadric hypersurface $Q \subset \mathbb{P}^3$ viewed as a degree $2$ covering of $\mathbb{P}^2$. In the $2$-away case, the situation is different: there are two $2$-away ACM line bundles on $Q$, whereas in our upcoming paper (see \cite{BPP}), we have shown that $2$-away ACM bundles of rank $2$ on $\mathbb P^2$ varies in a $3$ dimensional family in the first case of (see \cite{GG}, Theorem $3.3$). 
	Our motivation is to extend this study to $\ell$-away ACM bundles of higher ranks on $\mathbb{P}^2$. Thus, in this context, it is natural to consider $\ell$-away ACM line bundles on a smooth cubic hypersurface $X \subseteq \mathbb{P}^3$ viewed as a degree $3$ covering of $\mathbb{P}^2$. In this article, we will prove classification results of $1$-away and $2$-away ACM line bundles on $X$ and provide examples of $\ell$-away ACM line bundles for $\ell >2$. In addition, we will also establish the existence of higher degree smooth hypersurfaces in $\mathbb P^3$ admitting $\ell$-away ACM line bundles. 
	 This paper can also be considered as a contribution in the series of papers in the literature on several aspects of  line bundles over a smooth cubic surface (e.g see \cite{88}, \cite{90}, \cite{92ii}, \cite{92}, \cite{96}, \cite{F}, \cite{PLT} etc.)\ 
	 
	
	
	\subsection*{Main results and plan of the paper}\
	
	 In section $\S \ref{prelim}$,  we collect few definitions and facts related to the basic objects concerned which will be required in the subsequent sections.\
	 
	 In section $\S \ref{tech}$, we prepare some technical results on the properties of certain line bundles over a cubic surface which are useful for the proofs of the  main theorems of this article in  sections $\S\ref{cl12}$ and $\S\ref{ex}$. For convenience, we divide the later part of this section into two subsections. In the first subsection i.e. in  $\S\ref{DB}$, we establish a necessary degree bound result for $\ell(\geq 2)$-away ACM line bundles on $X$ (see Proposition \ref{degreebound}). In the second subsection i.e in  $\S\ref{12}$, we motivate the studies carried out in the later sections by documenting examples of $\ell$-away ACM line bundles of low degree  (degree $1,2$) on $X$.\ 

	
	  In section $\S \ref{cl12}$, in the first subsection $\S \ref{cl1}$, we  classify  initialized (see Definition \ref{In})  and $1$-away ACM line bundles on a smooth cubic surface as follows:\
	
	\begin{theorem}(= Theorem \ref{T31})
			Let $D$ be a non-zero effective divisor on a nonsingular cubic hypersurface $X \subset \mathbb P^3$. Then the following conditions are equivalent: 
		
		$(a)$ $\mathcal O_X(D)$ is initialized  and $1$-away ACM with respect to $H =\mathcal O_{\mathbb P^3}(1)|_X$.\
		
		$(b)$ either $(D^2, D.H)=(-2,2)$ or $(D^2, D.H)=(2,4)$.
	\end{theorem}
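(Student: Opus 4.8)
The plan is to treat the two implications separately, using throughout that $K_X=-H$ (so $H^2=3$ and $\chi(\mathcal O_X)=1$) and the Riemann--Roch formula
\[
\chi(\mathcal O_X(D+tH))=1+\tfrac12\bigl(D^2+(2t+1)(D\cdot H)+3t(t+1)\bigr),
\]
a convex quadratic in $t$. Put $a=D^2$ and $b=D\cdot H\ge 1$. Since $D$ is effective and $\mathcal O_X(D)$ is initialized, $h^0(\mathcal O_X(D+tH))=0$ for all $t\le -1$; Serre duality gives $h^2(\mathcal O_X(D+tH))=h^0(\mathcal O_X(-D-(t+1)H))$, which vanishes for $t\ge -1$, and $h^1(\mathcal O_X(D+tH))=h^1(\mathcal O_X(-D-(t+1)H))$ always.

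\textbf{The implication $(a)\Rightarrow(b)$.} The $1$-away hypothesis supplies a single $t_0$ with $h^1(\mathcal O_X(D+t_0H))\ne 0$, so $\chi(\mathcal O_X(D+tH))=h^0+h^2\ge 0$ for all $t\ne t_0$. Hence the negative locus of the convex quadratic $\chi(\mathcal O_X(D+tH))$ is an interval containing at most one integer, so it has length $<2$, which translates to $b^2-3a\le 12$. Also $h^0$ and $h^2$ vanish at $t=-1$, so $0\ge \chi(\mathcal O_X(D-H))=1+\tfrac12(a-b)=p_a(D)$, i.e.\ $a\le b-2$; substituting into $b^2-3a\le 12$ gives $b^2-3b-6\le 0$, hence $b\le 4$. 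A short check of the admissible pairs with $1\le b\le 4$ (using $a\le b-2$, $a\ge(b^2-12)/3$, the parity $a\equiv b\pmod 2$, the Hodge index bound $3a\le b^2$, and which classes contain an effective divisor of $H$-degree $b$) leaves exactly $(-1,1)$, $(0,2)$, $(1,3)$, $(-1,3)$, $(-2,2)$, $(2,4)$. The first three are the ACM line bundles of a line, a conic and a twisted cubic by Faenzi's classification \cite{F}, hence $0$-away, contradicting $(a)$. For $(-1,3)$ one has $p_a(D)=-1$, so $h^1(\mathcal O_X(D-H))\ne 0$, and in addition $h^1(\mathcal O_X(D-2H))=h^1(\mathcal O_X(H-D))\ne 0$, since $H-D$ has $H$-degree $0$ and self-intersection $-4$, so $\chi(\mathcal O_X(H-D))=-1$ with vanishing $h^0$ and $h^2$; thus $\mathcal O_X(D)$ would be at least $2$-away. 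Only $(-2,2)$ and $(2,4)$ survive.

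\textbf{The implication $(b)\Rightarrow(a)$.} The crux is the auxiliary statement that for two disjoint lines $\ell_1,\ell_2\subset X$ one has $S(\mathcal O_X(\ell_1+\ell_2))=\{(1,-1)\}$. For $t\ge 0$ this follows from the restriction sequence $0\to\mathcal O_X(\ell_1+tH)\to\mathcal O_X(\ell_1+\ell_2+tH)\to\mathcal O_{\ell_2}(\ell_1+\ell_2+tH)\to 0$, using that $\mathcal O_X(\ell_1)$ is ACM and that $\mathcal O_{\ell_2}(\ell_1+\ell_2+tH)$ has degree $t-1\ge -1$ on $\ell_2\cong\mathbb P^1$. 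For $t=-1$, Riemann--Roch gives $\chi(\mathcal O_X(\ell_1+\ell_2-H))=p_a(\ell_1+\ell_2)=-1$ with $h^0=h^2=0$, so $h^1=1$. For $t\le -2$, Serre duality reduces the claim to $h^1(\mathcal O_X(sH-\ell_1-\ell_2))=0$ for $s=-t-1\ge 1$, which follows from $0\to\mathcal O_X(sH-\ell_1-\ell_2)\to\mathcal O_X(sH)\to\mathcal O_{\ell_1\cup\ell_2}(sH)\to 0$ together with $H^1(\mathcal O_X(sH))=0$ (the cubic $X$ is projectively normal) and the surjectivity of $H^0(\mathbb P^3,\mathcal O(s))\to H^0(\mathcal O_{\ell_1\cup\ell_2}(s))$ for $s\ge 1$ (two skew lines span $\mathbb P^3$, so this is immediate in suitable coordinates). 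Granting the auxiliary statement: if $(a,b)=(-2,2)$, then an effective divisor of $H$-degree $2$ with self-intersection $-2$ is forced to be a sum $\ell_1+\ell_2$ of two disjoint lines, $\mathcal O_X(D)$ is initialized (the computations above give $h^0(\mathcal O_X(D))=1$, while $h^0(\mathcal O_X(D-H))=0$ for degree reasons), and so $\mathcal O_X(D)$ is $1$-away. If $(a,b)=(2,4)$, then $\chi(\mathcal O_X(2H-D))=1$ with $h^2=0$, so $h^0(\mathcal O_X(2H-D))\ge 1$; thus $2H-D$ is effective of class $(-2,2)$, i.e.\ $2H-D=\ell_1+\ell_2$ with $\ell_1,\ell_2$ disjoint lines, whence $D=2H-\ell_1-\ell_2$; one checks directly that $\mathcal O_X(D)$ is initialized, and Serre duality gives $h^1(\mathcal O_X(D+tH))=h^1(\mathcal O_X(\ell_1+\ell_2+(-t-3)H))$, so $S(\mathcal O_X(D))=\{(1,-2)\}$ and $\mathcal O_X(D)$ is $1$-away.

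The main obstacle is the case analysis in $(a)\Rightarrow(b)$: one must first squeeze out a degree bound tight enough to leave only finitely many numerical types, and then eliminate the borderline types — $(-1,1)$, $(0,2)$, $(1,3)$ by invoking Faenzi's classification of ACM line bundles, and $(-1,3)$ by the explicit cohomology computation above. In the converse direction the delicate input is the vanishing $h^1(\mathcal O_X(sH-\ell_1-\ell_2))=0$ for $s\ge 1$, which rests on the projective normality of $X$ together with the fact that two skew lines impose independent conditions on forms of every degree $\ge 1$.
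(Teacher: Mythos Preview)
Your proof is correct and takes a genuinely different route from the paper's. Two small remarks: your clause ``so it has length $<2$'' should read ``length $\le 2$'' (an open interval of length exactly $2$ with integer endpoints contains only one integer), but the resulting bound $b^2-3a\le 12$ is unaffected; and you should note that the numerical candidate $(-3,1)$ is excluded because any effective degree-$1$ divisor on $X$ is a line with self-intersection $-1$.

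For $(a)\Rightarrow(b)$, the paper does not bound the degree via convexity of $\chi$. Instead it invokes the Pons-Llopis--Tonini criterion (Theorem~\ref{Ton}\,(2)) to force the unique nonvanishing $H^1$ to occur at $t=-1$ or $t=-2$, and then applies Riemann--Roch at three consecutive twists in each case to pin down $(D^2,D\cdot H)$ directly. Your convexity argument is more elementary and self-contained, at the cost of a short numerical case analysis and the need to eliminate $(-1,3)$ by hand; the paper's route is shorter once Theorem~\ref{Ton} is available.

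For $(b)\Rightarrow(a)$, the paper appeals to its preparatory machinery: Lemma~\ref{L1} (an inductive bound on $h^0$ propagating the vanishing of $h^1$ along all twists beyond a given one) together with Proposition~\ref{P2} (explicit computation of $h^0$ for a handful of low-degree divisor types) to supply the base case. You instead reduce both numerical types to the configuration $\ell_1+\ell_2$ of two skew lines (using $D=2H-\ell_1-\ell_2$ and Serre duality in the $(2,4)$ case) and compute the cohomology of $\mathcal O_X(\ell_1+\ell_2+tH)$ directly via restriction to a line and the elementary fact that two skew lines in $\mathbb P^3$ impose independent conditions on forms of every positive degree. Your argument is more geometric and stand-alone; the paper's fits into a framework it reuses for the $2$-away classification.
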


In addition, we also give the explicit list of such divisors in Proposition \ref{P32}. In the second subsection $\S \ref{cl2}$,  we obtain a complete classification of initialized and $2$-away ACM line bundles on a smooth cubic surface as follows:\



\begin{theorem}(= Theorem \ref{T35})
	Let $D$ be a non-zero effective divisor on a nonsingular cubic hypersurface $X \subset \mathbb P^3$. Then the following conditions are equivalent: 
	
	$(a)$ $\mathcal O_X(D)$ is initialized  and $2$-away ACM with respect to $H =\mathcal O_{\mathbb P^3}(1)|_X$.\.\
	
	$(b)$ One of the following cases occur:
	
	$(i)$ $(D^2, D.H)=(4,6)$,  $D$ is nef and $|3H-D|=\emptyset$.\
	
	$(ii)$ $(D^2, D.H)=(3,5)$, $D$ is nef.\
	
	$(iii)$ $(D^2, D.H)=(0,4)$, $|2H-D|=\emptyset$. \
	
	$(iv)$ $(D^2, D.H)=(-3,3)$.\
	
	$(v)$ $(D^2, D.H)=(-1,3)$.\

\end{theorem}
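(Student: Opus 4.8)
The plan is to work on the smooth cubic surface $X$, which we recall is $\mathbb P^2$ blown up at six points in general position, so that $\mathrm{Pic}(X) = \mathbb Z^7$ with intersection form given by the standard diagonal $\mathrm{diag}(1,-1,\dots,-1)$ and $H = -K_X$. For an initialized line bundle $\mathcal O_X(D)$ the relevant cohomology groups to test are $H^1(X, \mathcal O_X(D + tH))$ for $t \in \mathbb Z$; by Serre duality on $X$ we have $H^1(X, \mathcal O_X(D+tH)) \cong H^1(X, \mathcal O_X(-D-tH+K_X))^\vee = H^1(X, \mathcal O_X((-1-t)H - D))^\vee$, so the set $S(\mathcal O_X(D))$ has a natural symmetry $t \mapsto -1-t$. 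The strategy is: (1) use Riemann--Roch on $X$, namely $\chi(\mathcal O_X(D+tH)) = 1 + \tfrac12\big((D+tH)^2 + (D+tH)\cdot H\big)$, to control $h^1$ in terms of $h^0$ and $h^2 = h^0((-1-t)H-D)$; (2) show that for $|t|$ large the relevant $h^0$ values are forced by the degree bound of Proposition \ref{degreebound} together with effectivity/nef considerations, so that only finitely many values of $t$ can contribute to $S$; (3) pin down exactly which $(D^2, D\cdot H)$ and which positional conditions (nef, $|2H-D|=\emptyset$, $|3H-D|=\emptyset$) produce exactly two non-vanishing $h^1$'s.

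Concretely, I would first invoke Proposition \ref{degreebound} to get an upper bound on $D\cdot H$ (and hence, via the Hodge index theorem $D^2 (H^2) \le (D\cdot H)^2$, i.e. $3D^2 \le (D\cdot H)^2$, together with effectivity forcing $D^2 \ge -2$ or the relevant lower bound from the adjunction/genus formula $D^2 + D\cdot H = 2p_a(D) - 2$ being even), so that $(D^2, D\cdot H)$ ranges over an explicit short list. For each candidate pair I would compute $\chi(\mathcal O_X(D+tH))$ as an explicit quadratic polynomial in $t$; the initialized hypothesis gives $h^0(\mathcal O_X(D-H)) = 0$ and $h^0(\mathcal O_X(D)) > 0$, and for $t \le -1$ duality converts the problem to the same kind of question for $(-1-t)H - D$. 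The key computation is then: for which $t$ is $h^1 > 0$? One has $h^1(\mathcal O_X(D+tH)) = h^0(\mathcal O_X(D+tH)) + h^0(\mathcal O_X((-1-t)H-D)) - \chi(\mathcal O_X(D+tH))$, and since $h^0$ of an effective-or-zero class on a Del Pezzo surface is computed by $\max(0, \chi)$ precisely when the class has no base locus obstruction (and in general $h^0 \ge \max(0,\chi)$), the positional conditions $|2H-D| = \emptyset$, $|3H-D|=\emptyset$ and nefness are exactly the conditions that make the "boundary" values of $t$ (namely $t = 0$ and the dual value, or $t = -1, -2$) behave as needed to get $\ell = 2$ rather than $\ell = 0, 1, 3, \dots$.

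For the forward direction $(a) \Rightarrow (b)$ I would argue by elimination: the degree bound plus Hodge index restricts to finitely many $(D^2, D\cdot H)$; for each I tabulate $S(\mathcal O_X(D))$ as a function of the positional data, discarding those that never give $\ell = 2$ and recording the precise side conditions for those that can. For the converse $(b) \Rightarrow (a)$, for each of the five listed cases I would exhibit (or cite from the explicit divisor lists analogous to Proposition \ref{P32}) that the line bundle is initialized, then directly compute that exactly two pairs $(i,t) = (1, t_1), (1, t_2)$ have non-vanishing cohomology — this is where the nef and empty-linear-system hypotheses get used to kill the would-be third non-vanishing group. I expect the main obstacle to be step (2)/(3): controlling $h^0$ of the "critical" twists when the naive Euler characteristic is zero or slightly negative, since then $h^0$ is not determined by $\chi$ alone and one must use the geometry of $X$ (the $27$ lines, the conic bundle structures, and when a class of the form $D$, $2H-D$, or $3H-D$ is effective but non-nef) to decide whether $h^0$ or $h^1$ jumps; organizing this case analysis cleanly, rather than the individual Riemann--Roch computations, is the real work. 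A secondary subtlety is ruling out contributions from $|t|$ moderately large but not covered by the crude degree bound, which I would handle by a monotonicity argument showing $\chi(\mathcal O_X(D+tH))$ grows and $h^2$ vanishes once $t$ exceeds an explicit threshold depending only on $D\cdot H$.
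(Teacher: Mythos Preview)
Your overall strategy---invoke Proposition~\ref{degreebound} for a degree bound, then Riemann--Roch case analysis---is the same as the paper's, and you correctly identify the real difficulty (computing $h^0$ at the critical twists via the geometry of the $27$ lines; this is exactly Proposition~\ref{P2}). But several of your stated tools are wrong or insufficient, and one key organizing lemma is missing.

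First, two errors. Your Serre-duality ``symmetry $t \mapsto -1-t$'' of $S(\mathcal O_X(D))$ is not a self-symmetry: $h^1(\mathcal O_X(D+tH)) = h^1(\mathcal O_X(-D-(t+1)H))$ relates $S(\mathcal O_X(D))$ to $S(\mathcal O_X(-D))$, not to itself, so you cannot halve the work this way. Second, neither ``effectivity forcing $D^2 \ge -2$'' nor the Hodge index inequality gives a usable lower bound on $D^2$: case~(iv) has $D^2 = -3$, and Hodge gives an \emph{upper} bound $D^2 \le (D\cdot H)^2/3$. In the paper the lower bounds on $D^2$ come from applying Riemann--Roch at twists where $h^1$ is already known to vanish; for instance $h^1(\mathcal O_X(D))=0$ together with $h^0(\mathcal O_X(D))\ge 1$ and $h^2(\mathcal O_X(D))=0$ forces $D^2 + D\cdot H \ge 0$.

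The more substantive gap is that you never pin down \emph{which} two twists carry the nonvanishing $h^1$. The paper does this via Lemma~\ref{2awaynecessary} (and the underlying Lemma~\ref{general}): using the restriction sequence to a hyperplane section $H$ one shows that the set of $t$ with $h^1(\mathcal O_X(D+tH))\ne 0$ is an interval, and that if $h^1(\mathcal O_X(D-H))\ne 0$ then $h^1(\mathcal O_X(D-2H))\ne 0$, while if $h^1(\mathcal O_X(D-H))=0$ then the nonvanishings are at $t=-2,-3$. This dichotomy into exactly two cases (nonvanishing at $\{-1,-2\}$ or at $\{-2,-3\}$) is what makes the Riemann--Roch bookkeeping finite; without it your ``short list'' of $(D^2, D\cdot H)$ is not determined, because you do not yet know at which $t$ to impose $\chi$-equalities versus $\chi$-inequalities.
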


We then give the explicit list of such divisors in Proposition \ref{P37} and as a consequence of the above two theorems, in Corollary \ref{notUlrich}, we also classify weakly Ulrich line bundles on $X$ that are not Ulrich.\

In section $\S \ref{ex}$, we give examples of $\ell$-away ACM line bundles on a nonsingular cubic surface. In the first example (see \ref{ex1}), we determine the values of $\ell$ for which a non-zero effective divisor $D$ on a smooth cubic surface $X$ with $D.H=3$ becomes $\ell$-away ACM and thereby complete the discussion on $\ell$-away ACM line bundles for line bundles of degree upto $3$ on $X$.\

 Next, let us consider a cubic surface $X$ being isomorphic with $\mathbb{P}^2$ blown up at $6$ points $P_1, P_2, \cdots, P_6$ in general  position. Let $\pi:X \rightarrow \mathbb{P}^2$ be the blow up map. Then any integral divisor in $X$ can be written as $\alpha l + \sum_{i=1}^6\beta_ie_i$, where $\alpha, \beta_i$ are integers, and $l$ is the class of pullback of a line in $\mathbb{P}^2$, and $e_i$ are $6$ exceptional divisors. With this notation, we then produce, examples of $\ell$-away ACM line bundles on $X$ for all natural numbers $\ell$ except the odd multiples of $3$ (i.e., for all $\ell \in \mathbb{N} \setminus 3(2\mathbb{N}+1)$) as in the following theorem:


\begin{theorem}(= Theorem \ref{Ext})
	Let $a \ge 0$ be an integer. Then we have the following:
	\begin{itemize}
		\item [(i)] The divisor $D=2l+a\sum_{i=1}^{6}e_{i}$ is $(6a+2)$-away ACM. The nonvanishing cohomologies are $H^{1}(\mathcal O_X(D-(5a+3)H)), \cdots, H^{1}(\mathcal O_X(D+(a-2)H))$.\ 
		
		\item[(ii)] The divisor $D=2l+(a+1)e_{1}+a\sum_{i=2}^{6}e_{i}$ is $(6a+4)$-away ACM. The nonvanishing cohomologies are $H^{1}(\mathcal O_X(D-(5a+4)H)), \cdots, H^{1}(\mathcal O_X(D+(a-1)H))$.\

		\item [(iii)] The divisor $D=2l+(a+1)e_{1}+(a+1)e_{2}+a\sum_{i=3}^{6}e_{i}$ is $(6a+5)$-away ACM. The nonvanishing cohomologies are $H^{1}(\mathcal O_X(D-(5a+5)H)), \cdots, H^{1}(\mathcal O_X(D+(a-1)H))$.\

		\item [(iv)] The divisor $D=2l+(a+1)e_{1}+(a+1)e_{2}+(a+1)e_{3}+a\sum_{i=4}^{6}e_{i}$ is $(6a+6)$-away ACM. The nonvanishing cohomologies are $H^{1}(\mathcal O_X(D-(5a+6)H)), \cdots, H^{1}(\mathcal O_X(D+(a-1)H))$.  \

		\item [(v)] The divisor $D=2l+(a+1)e_{1}+(a+1)e_{2}+(a+1)e_{3}+(a+1)e_{4}+a\sum_{i=5}^{6}e_{i}$ is $(6a+7)$-away ACM. The nonvanishing cohomologies are $H^{1}(\mathcal O_X(D-(5a+7)H)), \cdots, H^{1}(\mathcal O_X(D+(a-1)H))$. \

		\item [(vi)] The divisor $D=2l+(a+1)e_{1}+(a+1)e_{2}+(a+1)e_{3}+(a+1)e_{4}+(a+1)e_{5}+ae_{6}$ is $(6a+8)$-away ACM. The nonvanishing cohomologies are $H^{1}(\mathcal O_X(D-(5a+8)H)), \cdots, H^{1}(\mathcal O_X(D+(a-1)H))$. \

	\end{itemize}
\end{theorem}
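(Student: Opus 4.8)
The plan is, for each of the six divisor classes $D$ in the list, to compute $h^1(\mathcal O_X(D+tH))$ for every $t\in\mathbb Z$ and read off the answer. Since $\dim X=2$ and $K_X=-H$, Serre duality gives $h^2(\mathcal O_X(M))=h^0(\mathcal O_X(-H-M))$ for any divisor $M$, so Riemann--Roch on the surface yields the identity
\[
h^1(\mathcal O_X(M))=h^0(\mathcal O_X(M))+h^0(\mathcal O_X(-H-M))-\chi(\mathcal O_X(M)),\qquad \chi(\mathcal O_X(M))=1+\tfrac12\bigl(M^2+M\cdot H\bigr).
\]
Applying this with $M=D+tH$ and using $H^2=3$, the quantity $\chi(\mathcal O_X(D+tH))$ is an explicit quadratic in $t$ with leading coefficient $\tfrac32$, symmetric about its vertex. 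First I would record $D^2$ and $D\cdot H$ for each family (they are, respectively, quadratic and linear in $a$), hence this quadratic, and observe that it is negative precisely on the integer interval that forms the ``core'' of the claimed range of twists — the midpoint of the claimed range is exactly the vertex of $\chi$.

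The second, and main, task is to compute $h^0(\mathcal O_X(D+tH))$, and likewise $h^0(\mathcal O_X(-H-D-tH))=h^2(\mathcal O_X(D+tH))$, for all $t$. Writing $D+tH=(3t+2)\,l+\sum_i(a_i-t)e_i$ with $a_i$ the exceptional coefficients of $D$: when all $a_i-t\ge0$, each $e_i$ is a fixed component of $|D+tH|$ of multiplicity exactly $a_i-t$ (because $(D+tH-ke_i)\cdot e_i=k-(a_i-t)<0$ for $0\le k<a_i-t$), so removing them gives $h^0(\mathcal O_X(D+tH))=h^0(\mathbb P^2,\mathcal O(3t+2))$; when some $a_i-t<0$, the class is, up to exceptional components, a linear system of plane curves of degree $3t+2$ with assigned multiplicities at the general points $P_1,\dots,P_6$, whose dimension I would compute by repeatedly subtracting off the $(-1)$-curves sitting in its base locus (the $6$ classes $e_i$, the $15$ classes $l-e_i-e_j$, and the $6$ classes $2l-\sum_{k\ne i}e_k$) until reaching a nef class or the empty system — the effective cone of $X$ being finitely generated by these $27$ lines, the process terminates. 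Tracking the short exact sequences $0\to\mathcal O_X(N)\to\mathcal O_X(N+C)\to\mathcal O_C\bigl((N+C)|_C\bigr)\to0$ along these removals simultaneously computes the contribution to $h^1$ of each $(-1)$-curve of base-locus multiplicity $\ge 2$. Here the generality of $P_1,\dots,P_6$ is used, via the fact that the relevant linear systems are non-special apart from the effect of $(-1)$-curves in their base locus; one may also streamline the reductions using Cremona transformations based at triples of the points, which fix $H$ and all intersection numbers.

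Putting the pieces together, the non-vanishing block is pinned down as follows. On the core range one has $h^0=h^2=0$ and $\chi<0$, so $h^1=-\chi>0$. At the finitely many twists adjacent to the core on either side, exactly one of $h^0,h^2$ turns positive; there I would evaluate $h^0$ (or $h^2$) and $\chi$ explicitly from the base-locus analysis and check the sign of $h^0+h^2-\chi$, which is what fixes the two endpoints $-(5a+c)$ and $a-c'$ of the claimed range. Just beyond these endpoints the pertinent class becomes nef after removing $(-1)$-curves of base-locus multiplicity one, so $h^1=0$ (apply Kawamata--Viehweg vanishing to the stripped nef class on the del Pezzo surface $X$, noting that adding $-K_X=H$ makes it ample, and recall that removing multiplicity-one $(-1)$-curves leaves all cohomology unchanged); and for $t$ far out, $D+tH$ (resp.\ $-H-D-tH$) is itself ample, so $h^1=0$ there too. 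Contiguity of the non-vanishing block is automatic from the shape of the parabola $\chi$ together with the monotonicity of the tail contributions.

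I expect the chief obstacle to be the bookkeeping at the two transition twists, where the naive Euler-characteristic count fails and one must know exactly which $(-1)$-curves lie in the base locus of the relevant plane-curve systems, and with what multiplicity — this is also where the six families genuinely diverge, and where a handful of small values of $a$ may need to be checked by hand. As a more uniform alternative I would keep in mind the sequence $0\to\mathcal O_X(D+(t-1)H)\to\mathcal O_X(D+tH)\to\mathcal O_C(D+tH)\to0$ for a smooth hyperplane section $C$ (a plane cubic, hence elliptic), in which $h^i(\mathcal O_C(D+tH))$ depends only on the degree $(D+tH)\cdot H$; solving the resulting recursion downward from the range where $D+tH$ is ample recovers all the $h^1(\mathcal O_X(D+tH))$ at once.
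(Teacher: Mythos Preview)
Your plan is correct and overlaps substantially with the paper's argument: both compute $h^0$ of the relevant twists by stripping off exceptional components until one reaches a pullback from $\mathbb P^2$, both use Kawamata--Viehweg on a nef and big class to obtain vanishing just outside the endpoints, and both compare against the Riemann--Roch quadratic $\chi$ to detect nonvanishing. The paper, however, handles two points more cleanly than you do. For the left endpoint it does not compute $h^0$ of the dual twist directly; instead it produces a single line $L$ (namely $2l-\sum_{i\le 5}e_i$) with $((5a+c)H-D)\cdot L=-2$, and a one-line argument with the sequence $0\to\mathcal O_X(\,\cdot\,-L)\to\mathcal O_X(\,\cdot\,)\to\mathcal O_L\to 0$ plus an $H^2$-check forces $h^1\ge 1$ at that twist in all six cases uniformly. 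For contiguity, your sentence ``automatic from the shape of the parabola together with the monotonicity of the tail contributions'' is the weakest link as written: the parabola only controls $\chi$, not $h^1$, and monotonicity of the tails is not enough without a further argument. The paper instead isolates the hyperplane-section recursion you mention as an alternative into a standalone lemma (if $H^1(\mathcal L)=0$ then, according to the sign of $\deg(\mathcal L|_H)$, all $H^1(\mathcal L\pm nH)$ vanish on one side), and invokes it by contraposition to conclude that no intermediate twist can have $h^1=0$. Promoting that recursion from ``alternative'' to the main contiguity device would make your argument airtight and would also spare you most of the endpoint bookkeeping you flag as the chief obstacle.
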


 Finally, in section $\S \ref{hyp}$, we establish the following theorem, which guarantees the existence of higher degree smooth hypersurfaces in $\mathbb P^3$ admitting initialized and $\ell$-away ACM line bundles.\

\begin{theorem}(=Theorem \ref{T41})
	Let $\ell \geq 1$ be an integer. Then for any $d> \ell$, there exists a smooth hypersurface $X^{(d)} \subset \mathbb P^3$ of degree $d$  admitting  initialized and $\ell$-away ACM line bundles.
	
\end{theorem}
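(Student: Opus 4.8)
The plan is to trade the problem for a question about space curves. Let $X\subset\mathbb P^3$ be any smooth surface of degree $d$ and let $C\subset X$ be a smooth curve. The two exact sequences
\[0\to\mathcal O_X(tH-C)\to\mathcal O_X(tH)\to\mathcal O_C(tH)\to 0,\qquad 0\to\mathcal I_{C/\mathbb P^3}(t)\to\mathcal O_{\mathbb P^3}(t)\to\mathcal O_C(t)\to 0,\]
combined with the standard hypersurface facts $H^1(X,\mathcal O_X(tH))=0$ and $H^0(\mathbb P^3,\mathcal O_{\mathbb P^3}(t))\twoheadrightarrow H^0(X,\mathcal O_X(t))$, identify both $H^1(X,\mathcal O_X(tH-C))$ and $H^1(\mathbb P^3,\mathcal I_{C/\mathbb P^3}(t))$ with the cokernel of $H^0(\mathbb P^3,\mathcal O_{\mathbb P^3}(t))\to H^0(C,\mathcal O_C(t))$. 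Hence $H^1(X,\mathcal O_X(tH-C))\cong H^1(\mathbb P^3,\mathcal I_{C/\mathbb P^3}(t))$ for every $t$, so that $\mathcal O_X(-C)$ — replaced by $\mathcal O_X(mH-C)$ with $m$ minimal subject to $H^0(X,\mathcal O_X(mH-C))\ne 0$, in order to make it initialized, which only translates the set of $t$ at which $H^1$ is nonzero and does not change $\ell$ — is $\ell$-away ACM on $X$ as soon as the Hartshorne--Rao module $\bigoplus_{t\in\mathbb Z}H^1(\mathbb P^3,\mathcal I_{C/\mathbb P^3}(t))$ is nonzero in exactly $\ell$ degrees. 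It therefore suffices, for each $d>\ell$, to exhibit such a curve $C$ lying on a smooth surface of degree $d$.

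For the curve I would take $C=C_1\sqcup C_2$, a disjoint union of two general plane curves $C_1=\{x_0=F_1=0\}$ and $C_2=\{x_1=F_2=0\}$ of degrees $a_1$ and $a_2$ with $a_1+a_2=\ell+1$ (for instance $a_2=1$ and $a_1=\ell$, so that $C_2$ is a line; when $\ell=1$ this is a pair of skew lines). Using the Mayer--Vietoris sequence $0\to\mathcal I_C\to\mathcal I_{C_1}\oplus\mathcal I_{C_2}\to\mathcal O_{\mathbb P^3}\to 0$ (valid because $C_1$ and $C_2$ are disjoint) together with the fact that plane curves are ACM, one checks that $H^1(\mathbb P^3,\mathcal I_C(t))$ is isomorphic to the degree-$t$ graded piece of the Artinian complete intersection $k[x_2,x_3]/(\bar F_1,\bar F_2)$, where $\bar F_i=F_i|_{\{x_0=x_1=0\}}$; this is nonzero exactly for $0\le t\le a_1+a_2-2$, that is, in precisely $\ell$ degrees. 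So $C$ has the required deficiency.

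Finally I would produce the surface by Bertini. Since $d>\ell=a_1+a_2-1$, the forms of the shape $x_0x_1A+x_1F_1B+x_0F_2P+F_1F_2Q$, with $A,B,P,Q$ of degrees $d-2,\ d-a_1-1,\ d-a_2-1,\ d-a_1-a_2$ respectively, all have non-negative degree and span a linear subsystem of $|\mathcal I_C(d)|$ whose base locus is exactly $C_1\cup C_2=C$; hence a general member $X^{(d)}$ of this subsystem is smooth away from $C$. Smoothness of $X^{(d)}$ along $C$ is then checked componentwise: the defining equation of $X^{(d)}$ induces a section of the globally generated twisted conormal bundle $N^{\vee}_{C_i/\mathbb P^3}(d)$ on each $C_i$, and for a general choice this section is nowhere vanishing, since a general section of a globally generated rank-two bundle on a curve has no zeros. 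Then $\mathcal O_{X^{(d)}}(mH-C)$, with $m$ as above, is initialized and, by the isomorphism of the first paragraph, $\ell$-away ACM. I expect the genuine difficulty to be this last point: guaranteeing that the general surface through $C$ is smooth \emph{including at the points of $C$} for every $d\ge\ell+1$, in particular in the tight case $d=\ell+1$ where the coefficients $A,B,P,Q$ have the smallest possible degrees. Alternatively one may invoke the general principle that a smooth local complete intersection curve lies on a smooth surface of every degree at least its Castelnuovo--Mumford regularity, which for the present $C$ equals $\ell+1$.
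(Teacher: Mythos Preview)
Your argument is correct and follows the same overarching strategy as the paper: reduce the problem to the Hartshorne--Rao module of a space curve via the identification $H^1(X,\mathcal O_X(tH-C))\cong H^1(\mathbb P^3,\mathcal I_C(t))$, and then place $C$ on a smooth surface of degree $d$ for every $d>\ell$. The paper carries this out with a different curve: it takes a reduced $D\in|(\ell+1)h_1|$ on the quadric $X^{(2)}\cong\mathbb P^1\times\mathbb P^1$, i.e.\ a disjoint union of $\ell+1$ lines of one ruling, and invokes the classification in \cite{GG} to know that $\mathcal O_{X^{(2)}}(D)$ is $\ell$-away ACM. Your choice of two disjoint plane curves of degrees summing to $\ell+1$ is more elementary in that the Rao module is computed directly (as the Artinian complete intersection $k[x_2,x_3]/(\bar F_1,\bar F_2)$) without appealing to an external classification. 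For the smoothness of the containing surface the paper simply checks that $\mathcal I_D(d)$ is $0$-regular and then cites \cite{HMR}; this is exactly your ``alternative'' route via Castelnuovo--Mumford regularity, and indeed your curve satisfies $\operatorname{reg}(\mathcal I_C)=\ell+1$. Your direct Bertini/normal-bundle argument also goes through: the key point is that at every $p\in C_i$ one has $(x_1(p),F_2(p))\neq(0,0)$ (since $C_1\cap C_2=\emptyset$), so the induced sections of $N^{\vee}_{C_i/\mathbb P^3}(d)$ coming from your linear subsystem generate the bundle fibrewise, and the usual incidence count shows a general member is smooth along $C_i$. One cosmetic difference: the paper verifies that $\mathcal O_{X^{(d)}}(D)$ itself is already initialized, whereas you pass to a suitable twist; for the statement as phrased this is harmless.
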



\section*{Notation and Convention}
\begin{itemize}
	\item We work over the field of complex numbers.\

	\item 	Let $X$ be a smooth projective variety and $H$ be a very ample divisor on $X$ giving the embedding $X \subset \mathbb P^n$. For a vector bundle $E$ on $X$, $E(t)$ for some $t \in \mathbb Z$ means $E \otimes \mathcal O_X(H)^{\otimes t}$.\
	
	\item For a vector bundle $E$ over a smooth projective variety $X$, $H^i(X, E)$ stands for the i'th cohomology group of $E$ and $h^i(X, E):= \text{dim}_{\mathbb C}(H^i(X, E))$. We will simply write $H^i(E)$ (or $h^i(E)$) when $X$ is clear from the context.\
	
	\item Let $m \in \mathbb N$. Then the phrase `$E$ is $\ell (\geq m)$-away ACM' means that $E$ is $\ell$-away ACM where $\ell \geq m$.\
	
	\item For a line bundle $\mathcal L$ on a smooth projective variety, $\mathcal L^*$ stands for its dual.\
	
	\item We freely interchange between a non-zero effective divisor $D$ and its associated line bundle $\mathcal O_X(D)$ on a smooth projective variety $X$.\
	
	\item For a non-zero effective divisor $D$ on a smooth projective surface $X$, we mean by $p_a(D)$ the arithmetic genus of $D$ given by $p_a(D) =\frac{D(D+K_X)}{2}+1$, where $K_X$ is the canonical divisor of $X$.\

	\item In sections $\S \ref{tech}, \S \ref{cl12}$ and $\S \ref{ex}$, $X$ will always stand for a smooth cubic hypersurface in $\mathbb P^3$ and in section  $\S \ref{hyp}$, $X^{(d)}$ will stand for a smooth hypersurface of degree $d$ in $\mathbb P^3$.\
	
	\item In section  $\S \ref{hyp}$, for a divisor $D$ on a smooth hypersurface $X^{(d)} \subset \mathbb P^3$, $\mathcal I_D:= \mathcal I_{D, \hspace{0.5mm} \mathbb P^3}$ i.e. ideal sheaf of $D$ on $\mathbb P^3$.\

	
	
	\item By S.E.S, we mean a short exact sequence.\
\end{itemize}

	\section{Preliminaries}\label{prelim}
	In this section, we  recall the basic definitions and facts related to the main objects concerned, e.g. cubic surface, nef divisors on cubic surface, ACM bundles, $\ell$-away ACM bundles, weakly Ulrich bundles etc.\\

	
	
	 Note that a nonsingular cubic hypersurface $X \subset \mathbb P^3$ is isomorphic to $\mathbb P^2$ blown up at $6$ points $P_1, P_2, \cdots, P_6$ in general  position. Let $\pi: X \to \mathbb P^2$ be the projection. Let $E_1,...,E_6 \subset X$  be the exceptional curves, and
	let $e_1,...e_6 \in \text{Pic}(X)$ be their linear equivalence classes. Let $l \in \text{Pic}(X)$ be the class of $\pi^*$ of a line in $\mathbb P^2$. With this set up, let us note down the following crucial theorem.\
	
	\begin{theorem}\label{cubic}
		Let $X$ be a smooth cubic surface in $\mathbb P^3$. Then :
		
		$(a)$ $\text{Pic}(X) \cong \mathbb Z^7$.\
		
		$(b)$ the intersection pairing on $X$ is given by $l^2=1, e_i^2=-1, l.e_i=0, e_i.e_j=0$ for $i \neq j$.\
		
		$(c)$ the hyperplane section $H$ is $3l-\sum_{i=1}^{6} e_i$.\
		
		$(d)$ the canonical class is $K_X=-H=-3l+\sum_{i=1}^{6} e_i$.\
		
		$(e)$ if $D$ is any effective divisor on $X$, $D \sim al-\sum_{i=1}^{6} b_ie_i$, then the degree of $D$, as a curve in $\mathbb P^3$, is $d=3a-\sum_{i=1}^{6} b_i$.\
		
		$(f)$ the self-intersection of $D$ is $D^2=a^2-\sum_{i=1}^{6}b_i^2$.\
	\end{theorem}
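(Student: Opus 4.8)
The plan is to derive all six statements from the isomorphism $X \cong \mathrm{Bl}_{P_1,\dots,P_6}\mathbb{P}^2$ recorded just before the theorem, combined with the standard functorial behaviour of intersection theory under a blow-up and the adjunction formula for a surface in $\mathbb{P}^3$. For $(a)$, I would invoke the classical description of the Picard group of a blow-up: blowing up a single reduced point $p$ on a smooth surface $S$ yields $\mathrm{Pic}(\mathrm{Bl}_p S) \cong \pi^*\mathrm{Pic}(S) \oplus \mathbb{Z}[E]$, where $E$ is the exceptional curve. Iterating this over the six centres $P_1,\dots,P_6$ and starting from $\mathrm{Pic}(\mathbb{P}^2) \cong \mathbb{Z}$, generated by the class of a line, produces $\mathrm{Pic}(X) \cong \mathbb{Z}^7$ with free generators $l, e_1, \dots, e_6$. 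For $(b)$, the four intersection numbers are each a direct consequence of standard blow-up facts: $l^2 = 1$ and $l \cdot e_i = 0$ follow from the projection formula together with $\pi_* e_i = 0$ and $(\text{line})^2 = 1$ in $\mathbb{P}^2$; $e_i^2 = -1$ is the defining self-intersection of the exceptional curve over a smooth point; and $e_i \cdot e_j = 0$ for $i \neq j$ holds because the exceptional curves sitting over distinct centres are disjoint.

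For the canonical and hyperplane classes in $(c)$ and $(d)$, I would first compute $K_X$ intrinsically from the blow-up. The canonical bundle formula for a blow-up of smooth points gives $K_X = \pi^* K_{\mathbb{P}^2} + \sum_{i=1}^6 e_i$; since $K_{\mathbb{P}^2} = -3(\text{line})$ pulls back to $-3l$, this yields $K_X = -3l + \sum_{i=1}^6 e_i$. To tie this to the hyperplane class, I would apply adjunction to $X$ as a degree-$3$ hypersurface in $\mathbb{P}^3$: writing $H_{\mathbb{P}^3}$ for the hyperplane class of the ambient space, one has $X \sim 3H_{\mathbb{P}^3}$ and $K_{\mathbb{P}^3} = -4H_{\mathbb{P}^3}$, so $K_X = (K_{\mathbb{P}^3} + X)|_X = -H_{\mathbb{P}^3}|_X = -H$. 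Comparing the two expressions for $K_X$ gives simultaneously $(d)$, namely $K_X = -H = -3l + \sum_{i=1}^6 e_i$, and $(c)$, namely $H = -K_X = 3l - \sum_{i=1}^6 e_i$.

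Finally, $(e)$ and $(f)$ are pure computations in the ring determined by $(b)$ and $(c)$. For $(e)$, the degree of $D$ as a curve in $\mathbb{P}^3$ equals $D \cdot H$, so for $D \sim al - \sum_{i=1}^6 b_i e_i$ I would expand $D\cdot H = (al - \sum b_i e_i)\cdot(3l - \sum e_j)$ and read off $3a - \sum_{i=1}^6 b_i$ once the mixed terms vanish and $e_i^2 = -1$ is used. For $(f)$, expanding $D^2 = (al - \sum b_i e_i)^2$ and again using orthogonality of the generators together with $l^2 = 1$ and $e_i^2 = -1$ gives $a^2 - \sum_{i=1}^6 b_i^2$. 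The only step that is more than bookkeeping is the identification $H = -K_X$ underlying $(c)$ and $(d)$: everything else follows mechanically once the blow-up structure is in hand, so the adjunction comparison is where I would concentrate the argument, while the deeper classical input—that every smooth cubic surface actually arises as such a blow-up—is taken as given from the discussion preceding the statement.
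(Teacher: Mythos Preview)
Your argument is correct and is precisely the standard one: the paper does not supply its own proof but simply cites \cite{H}, Chapter~V, Proposition~4.8, whose content is exactly the blow-up/adjunction computation you outline. So you are in full agreement with the paper's approach, only more explicit.
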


    \begin{proof}
    	See \cite{H}, Chapter-$V$, Proposition $4.8$.\
    	\end{proof}
    
    At this point, let us note down the Riemann-Roch theorem for line bundles over a nonsingular cubic hypersurface $X \subset \mathbb P^3$. Let $D$ be divisor on $X$, then the Riemann-Roch theorem for $\mathcal O_X(D)$ is given by
    \begin{align}\label{RR}
    	\chi(\mathcal O_X(D))= \frac{D(D-K_X)}{2}+1
    \end{align}
    where $K_X$ is the canonical class of $X$.\\
    
The following nefness criteria will follow as a corollary of the ampleness criteria \cite[Chapter-$V$, Corollary $4.13(a)$]{H}. Note that this nefness criteria  will be useful for preparing the explicit list of $1$-away and $2$-away ACM divisors on a smooth cubic surface in section $\S \ref{cl12}$.\
\begin{theorem}\label{N}
	Let $D \sim al-\sum_{i=1}^{6} b_ie_i$ be a divisor class on a nonsingular cubic surface $X \subset \mathbb P^3$. Then $D$ is nef  $\iff$ $b_i \ge 0$ for each $i$, and $a \ge b_i+b_j$ for each $i,j$, and $2a \ge \sum_{i\neq j} b_i$ for each $j$
\end{theorem}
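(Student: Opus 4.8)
The plan is to obtain the nefness criterion as the ``closure'' of the ampleness criterion, using the standard fact that the nef cone of a projective variety is the closure of its ample cone. First I would record the three families of inequalities in $(b)$ as intersection numbers against the $27$ lines on $X$. Writing $D \sim al - \sum_{i=1}^6 b_i e_i$, a direct computation with the intersection pairing of Theorem \ref{cubic}$(b)$ gives
\begin{align*}
D \cdot e_i &= b_i, \\
D \cdot (l - e_i - e_j) &= a - b_i - b_j \quad (i \neq j), \\
D \cdot \Big(2l - \sum_{k \neq j} e_k\Big) &= 2a - \sum_{i \neq j} b_i.
\end{align*}
The classes $e_i$, $l - e_i - e_j$ and $2l - \sum_{k\neq j} e_k$ are precisely the $6 + 15 + 6 = 27$ lines on the cubic surface, so the three systems of inequalities in $(b)$ are exactly the conditions $D \cdot \gamma \ge 0$ as $\gamma$ runs over all $27$ lines.

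Next I would invoke \cite[Chapter-$V$, Corollary $4.13(a)$]{H}: $D$ is ample if and only if the corresponding \emph{strict} inequalities hold, i.e. $b_i > 0$, $a > b_i + b_j$ for $i \neq j$, and $2a > \sum_{i \neq j} b_i$ for each $j$. Passing to the real N\'eron--Severi space $N^1(X)_{\mathbb R} \cong \mathbb R^7$ (recall $\mathrm{Pic}(X) \cong \mathbb Z^7$ by Theorem \ref{cubic}$(a)$), this says that the ample cone $\mathcal A$ is the open polyhedral cone cut out by the finitely many strict linear inequalities $\ell_\gamma(D) := D \cdot \gamma > 0$. This cone is nonempty: the hyperplane class $H = 3l - \sum_{i=1}^6 e_i$ has $b_i = 1 > 0$, $a - b_i - b_j = 1 > 0$ and $2a - \sum_{i\neq j} b_i = 1 > 0$, so $H \in \mathcal A$.

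I would then combine two inputs. By Kleiman's theorem the nef cone is the closure of the ample cone, $\mathrm{Nef}(X) = \overline{\mathcal A}$. And by an elementary convexity fact, the closure of a nonempty set of the form $\{x : \ell_\gamma(x) > 0 \text{ for all } \gamma\}$ equals $\{x : \ell_\gamma(x) \ge 0 \text{ for all } \gamma\}$: one inclusion is immediate since each half-space $\{\ell_\gamma \ge 0\}$ is closed and contains $\mathcal A$, and for the reverse, given $x_0$ with all $\ell_\gamma(x_0) \ge 0$ and any fixed $u \in \mathcal A$, the segment $x_t = (1-t) x_0 + t u$ satisfies $\ell_\gamma(x_t) \ge t\,\ell_\gamma(u) > 0$ for $t \in (0,1]$, hence lies in $\mathcal A$ and tends to $x_0$. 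Therefore $\mathrm{Nef}(X)$ is exactly the closed cone defined by the non-strict inequalities $D \cdot \gamma \ge 0$ over the $27$ lines, which are the three families in $(b)$. Since $D$ is nef if and only if its class lies in $\mathrm{Nef}(X)$, this proves the equivalence in both directions simultaneously.

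The only point requiring care --- and the step I would expect to be the main, if minor, obstacle --- is the passage from the strict to the non-strict inequalities, i.e. verifying that taking closures neither drops a facet nor enlarges the cone beyond the non-strict system. This is exactly what the convexity fact guarantees once the ample cone is known to be nonempty, so the nonemptiness check via $H$ is what makes the argument go through cleanly. Alternatively one could bypass Kleiman and argue directly through Nakai--Moishezon, using that the Mori cone $\overline{NE}(X)$ of the del Pezzo surface $X$ is generated by the classes of the $27$ lines; but deriving the statement as a corollary of the ampleness criterion, as suggested by the cited reference, is the most economical route.
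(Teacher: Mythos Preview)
Your proof is correct and follows essentially the same route as the paper: both invoke the ampleness criterion \cite[Chapter-$V$, Corollary $4.13(a)$]{H} and then pass to the nef cone via Kleiman's theorem $\mathrm{Nef}(X)=\overline{\mathrm{Amp}(X)}$. The only difference is in how the closure is computed: the paper writes a nef class explicitly as a limit of ample $\mathbb Q$-classes (and conversely perturbs by $\frac{1}{n}H$-like terms), whereas you handle both directions at once by the general convexity fact that the closure of a nonempty open polyhedral cone $\{\ell_\gamma>0\}$ is $\{\ell_\gamma\ge 0\}$, after checking nonemptiness with $H$; your added observation that the three families of inequalities are precisely $D\cdot\gamma\ge 0$ over the $27$ lines is a clean way to package the statement.
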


\begin{proof}
	By the ampleness criterion in \cite[Chapter-$V$, Corollary $4.13(a)$]{H}, we have $D$ is ample  $\iff$ $b_i > 0$ for each $i$, and $a > b_i+b_j$ for each $i,j$, and $2a > \sum_{i\neq j} b_i$ for each $j$. By Kleiman's criterion, the nef cone $\text{Nef}(X)$ is the closure of the ample cone $\overline{\text{Amp}(X)}$. Thus any integral nef divisor $D$ can be written as $D=\lim_{n \to \infty}D_n$, where $D_n=a_nl-\sum_{i=1}^{6}b_n(i)e_i$ are ample $\mathbb{Q}$-divisors. There exists natural number $N_n$ such that $N_n \cdot D_n=N_na_nl-N_n(\sum_{i=1}^{6}b_n(i)e_i)$ is an integral ample divisor. By \cite[Chapter-$V$, Corollary $4.13(a)$]{H}, ampleness of $N_n \cdot D_n$ is equivalent to $N_n \cdot b_n(i) >0$ for each $i$, and $N_na_n > N_nb_n(i)+N_nb_n(j)$ for each $i,j$, and $2N_na_n > \sum_{i\neq j} N_nb_n(i)$ for each $j$. Since $N_n \ge 1$, we conclude that $b_n(i) >0$ for each $i$, and $a_n > b_n(i)+b_n(j)$ for each $i,j$, and $2a_n > \sum_{i\neq j} b_n(i)$ for each $j$. From $D=\lim_{n \to \infty}D_n$, we get the following
	\begin{equation}
		a=\lim_{n \to \infty}a_n, b_i=\lim_{n \to \infty}b_n(i).
	\end{equation}
	Hence, it follows that $b_i \ge 0$, and $a-b_i-b_j= \lim_{n \to \infty}(a_n-b_n(i)-b_n(j)) \ge 0$, and $2a-\sum_{i\neq j}b_i=\lim_{n \to \infty}(2a_n-\sum_{i\neq j}b_n(i)) \ge 0$.\\
	
	For the converse, we assume that for a divisor $D\sim al-\sum_{i=1}^{6} b_ie_i$, we have $b_i \ge 0$ for each $i$, and $a \ge b_i+b_j$ for each $i,j$, and $2a \ge \sum_{i\neq j} b_i$ for each $j$. Then we consider the $\mathbb{Q}$-divisors $D_n=(a+\frac{1}{n})l-\sum_{i=1}^{6}(b+\frac{1}{5n})e_i$ for all $n \in \mathbb{N}$. Then from the ampleness criterion \cite[Chapter-$V$, Corollary $4.13(a)$]{H}, it follows that $D_n$'s are ample $\mathbb{Q}$-divisors for all $n \in \mathbb{N}$. We also have $D=\lim_{n \to \infty}D_n$. Hence, it follows that $D$ is nef.

	\
\end{proof}




Next, we recall the generalized version of Castelnuovo-Mumford regularity criteria for any smooth projective variety.\

\begin{definition}(Regularity) \normalfont  Let $X$ be a projective variety and $B$ an ample line bundle on $X$ that is generated by its global sections. A coherent sheaf $\mathcal F$ on $X$ is $m$-regular with respect to $B$ if $H^i(X, \mathcal F \otimes B^{\otimes (m-i)})=0$ for $i >0$.\
	
\end{definition}

\begin{theorem}\label{CM}
	Let $\mathcal F$ be an $m$-regular sheaf with respect to $B$. Then for every $k \geq 0$.\
	
	$(i)$ $\mathcal F \otimes B^{\otimes (m+k)}$ is generated by its global sections.\
	
	$(ii)$ The natural maps $H^0(X, \mathcal F \otimes B^{\otimes m}) \otimes H^0(X,  B^{\otimes k}) \to H^0(X, \mathcal F \otimes B^{\otimes (m+k)}) $ are surjective.\
	
	$(iii)$ $\mathcal F$ is $(m+k)$-regular with respect to $B$.
\end{theorem}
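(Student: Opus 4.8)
The plan is to recognize this as the theorem of Mumford and to reduce it to the classical case $X=\mathbb P^N$, $B=\mathcal O(1)$, which I then prove by induction on the dimension of the support. First I would exploit that $B$ is simultaneously ample and globally generated: the complete linear system $|B|$ defines a morphism $\phi\colon X\to\mathbb P^N$ with $\phi^*\mathcal O_{\mathbb P^N}(1)=B$ and $H^0(\mathbb P^N,\mathcal O(1))\xrightarrow{\sim}H^0(X,B)$. Ampleness forces $\phi$ to be finite, since a curve contracted by $\phi$ would have $B$-degree zero; being proper with finite fibres, $\phi$ is finite. Writing $\mathcal G:=\phi_*\mathcal F$, finiteness (so $R^q\phi_*=0$ for $q>0$) together with the projection formula yields natural isomorphisms $H^i(X,\mathcal F\otimes B^{\otimes t})\cong H^i(\mathbb P^N,\mathcal G\otimes\mathcal O(t))$ for all $i,t$. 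Hence $\mathcal F$ is $m$-regular with respect to $B$ iff $\mathcal G$ is $m$-regular with respect to $\mathcal O(1)$, and $(iii)$ transfers verbatim. For $(i)$ I would use that the counit $\phi^*\phi_*(-)\to(-)$ is surjective for a finite (affine) morphism, so global generation of $\mathcal G\otimes\mathcal O(m+k)$ descends to $\mathcal F\otimes B^{\otimes(m+k)}$; for $(ii)$ the surjectivity descends through the commutative square relating $H^0(\mathcal G(m))\otimes H^0(\mathcal O(k))$ and $H^0(\mathcal F\otimes B^m)\otimes H^0(B^{\otimes k})$, since $H^0(\mathcal O(k))$ pulls back into $H^0(B^{\otimes k})$ compatibly with multiplication and the right-hand vertical map is an isomorphism.

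It then remains to prove the three statements on $\mathbb P^N$ for the coherent sheaf $\mathcal G$, which I would do by induction on $\dim\operatorname{Supp}\mathcal G$. The base case $\dim\operatorname{Supp}\mathcal G=0$ is immediate: all higher cohomology vanishes, so $\mathcal G$ is regular for every twist, while $(i)$ and $(ii)$ follow by choosing a section of $\mathcal O(1)$ (resp. $\mathcal O(k)$) nonvanishing on the finite support. For the inductive step I would choose a hyperplane $H=Z(\ell)$ with $\ell\in H^0(\mathcal O(1))$ a non-zero-divisor on $\mathcal G$; such $\ell$ exists because $\mathcal G$ has only finitely many associated points and the base field $\mathbb C$ is infinite. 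This produces the restriction sequence $0\to\mathcal G(-1)\xrightarrow{\ell}\mathcal G\to\mathcal G_H\to0$ with $\dim\operatorname{Supp}\mathcal G_H<\dim\operatorname{Supp}\mathcal G$. Feeding the $m$-regularity of $\mathcal G$ into the long exact cohomology sequence shows $\mathcal G_H$ is $m$-regular on $H\cong\mathbb P^{N-1}$, so the inductive hypothesis supplies $(i)$, $(ii)$, $(iii)$ for $\mathcal G_H$.

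From here the three conclusions for $\mathcal G$ come out in order. For $(iii)$ I would twist the restriction sequence by $\mathcal O(m+1-i)$ and read off $H^i(\mathcal G(m+1-i))$ as squeezed between $H^i(\mathcal G(m-i))=0$ (the $m$-regularity of $\mathcal G$) and $H^i(\mathcal G_H(m+1-i))=0$ (the $(m+1)$-regularity of $\mathcal G_H$ from induction); this proves $m$-regular $\Rightarrow(m+1)$-regular, and iterating gives $(iii)$ for all $k$. For $(ii)$ I would run a diagram chase on the square comparing the one-step multiplication maps for $\mathcal G$ and $\mathcal G_H$: the restriction $H^0(\mathcal G(m))\to H^0(\mathcal G_H(m))$ is surjective because $H^1(\mathcal G(m-1))=0$, the restriction $H^0(\mathcal O(1))\to H^0(\mathcal O_H(1))$ is surjective because $H^1(\mathcal O_{\mathbb P^N})=0$, and the bottom map is surjective by induction; given $t\in H^0(\mathcal G(m+1))$ one corrects it by an element of the image so that the difference lies in $\ell\cdot H^0(\mathcal G(m))\subseteq\operatorname{im}$, giving one-step surjectivity, and $(iii)$ then lets me iterate to arbitrary $k$. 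Finally I would deduce $(i)$ formally: since $\mathcal G(m+k)$ is $0$-regular by $(iii)$, it suffices to show that a $0$-regular sheaf is globally generated, and this follows by examining the cokernel $\mathcal C$ of $H^0(\mathcal G)\otimes\mathcal O\to\mathcal G$, where surjectivity of multiplication forces $H^0(\mathcal C(k))=0$ for $k\gg0$, whence $\mathcal C=0$ by Serre.

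The step I expect to be the main obstacle is the interlocking inductive bookkeeping rather than any isolated computation: one must verify that the general hyperplane can simultaneously be taken a non-zero-divisor (which is precisely where the infinite ground field and the global generation of $B$ enter), that $m$-regularity is genuinely inherited by $\mathcal G_H$, and—most delicately—that the multiplier-lifting in $(ii)$ goes through. This last point is exactly where passing to $\mathbb P^N$ pays off, because there $H^1(\mathcal O_{\mathbb P^N})=0$ makes $H^0(\mathcal O(1))\to H^0(\mathcal O_H(1))$ surjective, a surjectivity that can fail on a general variety $X$ and is the reason the reduction, not a direct induction on $X$, is the right route.
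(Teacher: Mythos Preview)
Your proof is correct and complete; it is the classical Mumford argument as presented, for instance, in Lazarsfeld's \emph{Positivity in Algebraic Geometry}, Theorem~1.8.5. The paper does not actually prove this statement: its entire proof is the single line ``See \cite{PAG}, Theorem~1.8.5.''\ Since the result is quoted only as a standard tool (used repeatedly to pass from $0$-regularity to global generation and to propagate $H^1$-vanishing), the authors treat it as background.

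Your write-up in fact supplies what the reference contains: the reduction to $\mathbb P^N$ via the finite morphism $\phi$ induced by $|B|$, followed by Mumford's hyperplane-restriction induction. One small remark on your descent of statement~$(ii)$: the square you describe has $H^0(\mathcal O_{\mathbb P^N}(k))\to H^0(X,B^{\otimes k})$ only injective in general (not an isomorphism for $k\geq 2$), but, as you implicitly use, surjectivity of the top map composed with the right-hand isomorphism already forces the bottom map to be surjective, so the argument goes through. Likewise your deduction of~$(i)$ from~$(ii)$ via the cokernel $\mathcal C$ is fine once one notes that for $k\gg 0$ the relevant $H^1$ terms vanish by Serre, so that $H^0(\mathcal C(k))$ really is the cokernel of the multiplication map.
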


\begin{proof}
See \cite{PAG}, Theorem $1.8.5$.\
\end{proof}


Next, we recall the definitions of the different types of bundles concerned. Although all these definitions can be given more generally i.e. for polarized projective varieties, for our purpose (i.e. for application on a smooth cubic surface), we only note down the definitions for smooth embedded projective varieties.\

\begin{definition}\label{In}(Initialized) \normalfont Let $X$ be a smooth projective variety and $H$ be a very ample divisor on $X$ giving the embedding $X \subset \mathbb P^n$. A vector bundle $E$ on $X$ is called initialized with respect to $H$ if $H^0(X, E) \neq 0$ and $H^0(X, E\otimes \mathcal O_X(-H)) =0$.\
\end{definition}

\begin{remark}
\normalfont	Note that by Serre duality and Serre vanishing, any vector bundle on a smooth embedded projective variety $(X,H)$ can be made initialized.
\end{remark}

\begin{definition}(ACM)
\normalfont	Let $X$ be a smooth projective variety and $H$ be a very ample divisor on $X$ giving the embedding $X \subset \mathbb P^n$. A vector bundle $E$ on $X$ is called ACM with respect to $H$ if $H^i(X, E \otimes \mathcal O_X(H)^{\otimes t}) =0$ for all $t \in \mathbb Z$ and $0 < i < \text{dim}(X)$.\
\end{definition}

Now we recall the classification theorem of ACM line bundles on Del-pezzo surfaces by Pons-Llopis and Tonini. This theorem is a crucial ingredient for the characterization of $1$-away and $2$-away ACM line bundles over a cubic surface.\

\begin{theorem}\label{Ton}
	Let $X \subset \mathbb P^d$ be a del Pezzo surface of degree $d$ embedded through the very ample divisor $H=-K_X$ and $\mathcal L$ be a line bundles on $X$. Then the following are equivalent:
	
	$(1)$ $\mathcal L$ is initialized and ACM.\
	
	$(2)$  $\mathcal L$ is initialized and $h^1(\mathcal L(-1)) = h^1(\mathcal L(-2))=0$.\
	
	$(3)$ $\mathcal L \cong \mathcal O_X$ or $\mathcal L \cong \mathcal O_X(D)$ where $D$ is a divisor such that $D^2=D.H-2$ and $0 <D.H \leq H^2$.\
	
	$(4)$ $\mathcal L \cong \mathcal O_X$ or $\mathcal L \cong \mathcal O_X(D)$, where $D$ is a rational normal curve on $X$ with $\text{deg}(D) \leq d$.\
	
\end{theorem}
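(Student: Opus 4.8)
The plan is to prove the cycle $(1)\Rightarrow(2)\Rightarrow(3)\Rightarrow(1)$ together with $(3)\Leftrightarrow(4)$, concentrating all cohomological work in the arrows $(2)\Rightarrow(3)$ and $(3)\Rightarrow(1)$. The implication $(1)\Rightarrow(2)$ is immediate, since an ACM bundle has $H^1(\mathcal L(t))=0$ for \emph{every} $t$, in particular for $t=-1,-2$. For $(2)\Rightarrow(3)$ I would first dispose of the case $\mathcal L\cong\mathcal O_X$, and otherwise write $\mathcal L=\mathcal O_X(D)$ with $D$ effective and nonzero (legitimate since $\mathcal L$ is initialized, so $H^0(\mathcal L)\ne 0$). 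The heart is two applications of Riemann--Roch \eqref{RR} together with $K_X=-H$. First, $\chi(\mathcal L(-1))=\tfrac{D^2-D.H}{2}+1$, and each of its three cohomology terms vanishes: $h^0(\mathcal L(-1))=0$ by initialization, $h^1(\mathcal L(-1))=0$ by hypothesis, and $h^2(\mathcal L(-1))=h^0(\mathcal O_X(-D))=0$ by Serre duality and effectivity of $D$. Hence $\chi(\mathcal L(-1))=0$, i.e.\ $D^2=D.H-2$. Effectivity of $D$ and ampleness of $H$ give $D.H>0$. Second, substituting $D^2=D.H-2$ yields $\chi(\mathcal L(-2))=H^2-D.H$; since $h^0(\mathcal L(-2))=0$, $h^1(\mathcal L(-2))=0$, and $h^2(\mathcal L(-2))=h^0(\mathcal O_X(H-D))\ge 0$, I obtain $H^2-D.H=h^0(\mathcal O_X(H-D))\ge 0$, i.e.\ $D.H\le H^2$. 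This is exactly (3).

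For $(3)\Leftrightarrow(4)$ I would observe that the numerical condition $D^2=D.H-2$ is precisely $p_a(D)=\tfrac{D(D+K_X)}{2}+1=\tfrac{D^2-D.H}{2}+1=0$. Thus (3) asserts that $\mathcal L=\mathcal O_X(D)$ for an effective genus-$0$ class of $H$-degree $n:=D.H$ with $0<n\le H^2=d$. To reach (4) I realize this class by a smooth irreducible rational curve $C$ with $\deg_H C=n$ spanning a $\mathbb P^{n}$, i.e.\ a rational normal curve of degree $n\le d$; conversely a rational normal curve $C$ of degree $n\le d$ is smooth rational, so $p_a(C)=0$ and $C.H=n$, which returns (3). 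Representing the class by an integral, linearly normal, smooth rational curve is the geometric content here, using base-point-freeness/Bertini on the relevant linear system and the structure of the effective cone of the del Pezzo surface.

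For $(3)\Rightarrow(1)$ I use the smooth member $C\cong\mathbb P^1$ of $|D|$ and twist the ideal-sheaf sequence $0\to\mathcal O_X(-D)\to\mathcal O_X\to\mathcal O_D\to 0$ by $\mathcal L(t)$ to get
\[
0\longrightarrow \mathcal O_X(tH)\longrightarrow \mathcal L(t)\longrightarrow \mathcal O_D(D+tH)\longrightarrow 0 .
\]
Writing $\mathcal O_X(tH)=\mathcal O_X(-tK_X)$ and combining Kodaira vanishing (for $t\ge 1$), $q(X)=0$ (for $t=0$), and Serre duality (for $t\le -1$), one gets $H^1(\mathcal O_X(tH))=0$ for all $t$, whence $H^1(\mathcal L(t))\hookrightarrow H^1(\mathcal O_D(D+tH))$. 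On $D\cong\mathbb P^1$ the restricted bundle has degree $(D+tH).D=(t+1)D.H-2\ge -1$ for $t\ge 0$, so $H^1(\mathcal L(t))=0$ for $t\ge 0$; the value $t=-1$ is already handled by $\chi(\mathcal L(-1))=0$ above. Equivalently, $\mathcal L$ is $1$-regular (the needed $H^2(\mathcal L(-1))=h^0(\mathcal O_X(-D))=0$ holds), and Theorem \ref{CM} delivers the vanishing for all $t\ge 0$ at once.

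The remaining range $t\le -2$ is the main obstacle, and I would treat it by a self-duality trick. Serre duality gives $H^1(\mathcal L(t))\cong H^1(\mathcal O_X(sH-D))^{*}$ with $s=-t-1\ge 1$, so it suffices to show $H^1(\mathcal O_X(sH-D))=0$ for all $s\ge 1$; applying the Castelnuovo--Mumford bootstrap of Theorem \ref{CM} to $\mathcal L^{-1}=\mathcal O_X(-D)$ (whose relevant $H^2$'s vanish, since $D-H$ and $D-2H$ are non-effective) reduces this to the single vanishing $H^1(\mathcal O_X(H-D))=0$. The key point is that $D':=H-D$ satisfies the \emph{same} constraints as $D$: one computes $p_a(H-D)=\tfrac{(H-D)(-D)}{2}+1=0$ and $(H-D)^2=(H-D).H-2$, with $0\le (H-D).H=H^2-D.H<H^2$. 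If $H-D$ is not effective, then $h^0=h^2=0$ forces $\chi(\mathcal O_X(H-D))=-h^1\le 0$, while $\chi=H^2-D.H\ge 0$, so $h^1=0$ directly; if $H-D$ is effective, it is again an integral genus-$0$ curve of type (3), and the Paragraph-three argument applied to $D'$ gives $H^1(\mathcal O_X(H-D))=H^1(\mathcal O_X(D'))=0$. Thus the class of such divisors is stable under $D\mapsto H-D$, and this stability folds the negative twists onto the already-settled nonnegative ones, completing $(3)\Rightarrow(1)$. I expect the genuinely delicate step to be the geometric input in $(3)\Leftrightarrow(4)$—realizing the genus-$0$ numerical class by a smooth, irreducible, linearly normal rational curve—since the cohomological bootstrap above is then essentially formal.
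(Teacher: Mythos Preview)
The paper does not prove this theorem at all: its entire ``proof'' is the single line ``See \cite{PLT}, Theorem $4.5$.'' This is a quoted background result from Pons-Llopis and Tonini, not something established in the present article, so there is no argument in the paper to compare against.

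That said, your reconstruction is essentially sound. The Riemann--Roch computations in $(2)\Rightarrow(3)$ are correct, as is the self-duality trick $D\mapsto H-D$ for folding the negative twists onto the nonnegative ones in $(3)\Rightarrow(1)$. Two minor points: your ``$(3)\Rightarrow(1)$'' really passes through $(4)$, since you invoke a smooth $C\cong\mathbb P^1$ in $|D|$ (and again in $|H-D|$); this is harmless given that you also assert $(3)\Leftrightarrow(4)$, but it means the cohomological bootstrap is not independent of the geometric step. You should also check that $\mathcal O_X(D)$ is initialized in $(3)\Rightarrow(1)$: $\chi(\mathcal O_X(D))=D.H>0$ gives $h^0>0$, while $(D-H).H\le 0$ forces $h^0(\mathcal O_X(D-H))=0$ (the boundary case $D.H=H^2$ cannot yield $D\sim H$ since $D^2=H^2-2\ne H^2$). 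As you correctly flag, the genuine content lies in $(3)\Leftrightarrow(4)$---showing that every such numerical class on a del Pezzo surface is represented by a smooth irreducible linearly normal rational curve---and that is exactly what is done in the cited reference.
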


\begin{proof}
	See \cite{PLT}, Theorem $4.5$.\
\end{proof}

\begin{definition}($\ell$-away ACM)
\normalfont	Let $X$ be a smooth projective variety and $H$ be a very ample divisor on $X$ giving the embedding $X \subset \mathbb P^n$. A vector bundle $E$ on $X$ is called $\ell$-away ACM with respect to $H$ if there are exactly $\ell$ pairs $(i,t) \in \mathbb Z^{2}$ with $0 <i <\text{dim}(X)$ such that $H^i(X, E \otimes \mathcal O_X(H)^{\otimes t}) \neq 0$.\
\end{definition}

\begin{remark}
\normalfont Let $\mathcal L$ be a line bundle on a smooth cubic surface $X \subset \mathbb P^3$. By Serre duality, it easily follows that: if $\mathcal L$ is ACM (respectively $\ell$-away ACM) then so is $\mathcal L^*$.\
\end{remark}

\begin{definition}(Ulrich)
\normalfont	Let $X$ be a smooth projective variety and $H$ be a very ample divisor on $X$ giving the embedding $X \subset \mathbb P^n$. A vector bundle $E$ on $X$ is called Ulrich with respect to $H$ if $E$ is initialized and ACM bundle and $h^0(E)= \text{deg}_H(X). \text{rank}(E)$ or equivalently $H^i(E(-t))=0$ for $i \geq 0$ and $1 \leq t \leq \text{dim}(X)$.\
\end{definition}

\begin{remark}
\normalfont	  Note that [\cite{Eisenbud}, Proposition $2.1$], [\cite{book}, Theorem 3.2.9] suggests that Ulrich bundles can be characterized in many different ways.\     
	
\end{remark}

It is nontrivial to construct/find Ulrich bundles on a smooth projective variety. As an approximation of Ulrich bundles, one has the so called `weakly Ulrich' bundles which are almost Ulrich in the following sense.\

\begin{definition}\label{Wul}(weakly-Ulrich)
\normalfont	Let $X$ be a smooth projective variety and $H$ be a very ample divisor on $X$ giving the embedding $X \subset \mathbb P^n$. An initialized vector bundle $E$ on $X$ is called weakly Ulrich with respect to $H$ if $E$ satisfies the following cohomological properties:\
	
	$(i)$ $H^i(E(-t))=0$ for $1 \leq i \leq \text{dim}(X)$ and $t \leq i-1 \leq \text{dim}(X)-1$, and\
	
	$(ii)$ $H^i(E(-t))=0$ for $0 \leq i \leq \text{dim}(X)-1$ and $t \geq i+2$.\
\end{definition}

\begin{remark}
\normalfont Every Ulrich bundle is weakly Ulrich, but the converse is not true, e.g. $\Omega^1_{\mathbb P^n}(2)$ on $\mathbb P^n (n \geq 2)$. Note that if $\text{dim}(X)=2$, then a weakly Ulrich bundle is either ACM or $1$-away ACM or $2$-away ACM (with nonvanishing twists $-1$ or $-2$ or both).\
	
\end{remark}


	


	
	
		
		
		
		

\section{Technical results}\label{tech}

In this section, we prepare some results on line bundles over a cubic surface which are important ingredients for the proof of the  main theorems in the upcoming sections $\S\ref{cl12}$ and $\S\ref{ex}$. For convenience, we divide the later part of this section into two subsections. In the first subsection i.e. in  $\S\ref{DB}$, we establish a necessary degree bound result for $\ell(\geq 2)$-away ACM line bundles on $X$. In the second subsection i.e. in  $\S\ref{12}$, we motivate the studies carried out in the upcoming sections by documenting examples of $\ell$-away ACM line bundles of low degree on $X$. Throughout this section, $X$ will stand for a nonsingular cubic hypersurface in $\mathbb P^3$ and $H= \mathcal O_{\mathbb P^3}(1)|_X$.\\

The following lemma will be crucial for proving the sufficient directions of the classification theorems (in  section $\S\ref{cl12}$) for $1$-away and $2$-away ACM line bundles on a nonsingular cubic surface $X$. To be more precise, for a non-zero effective divsior $D$ on $X$, this lemma guarantees the vanishing of $h^1(\mathcal O_X(D+tH))$ for all integers $t>k$ (respectively $t<k$), once it is known that $h^1(\mathcal O_X(D+kH))=0$ for some $k \in \mathbb Z$.\

\begin{lemma}\label{L1}
Let $D$ be a non-zero effective divisor on $X$. Then the following holds:

$(i)$ Let $h^0(\mathcal O_X(D+tH)) \leq \frac{D^2+(2t+1)D.H+3t(t+1)+2}{2}$ for some $t \geq -1$.  Then for all $s \geq t$, $h^0(\mathcal O_X(D+sH)) \leq \frac{D^2+(2s+1)D.H+3s(s+1)+2}{2}$ 

$(ii)$ Let $h^0(\mathcal O_X((t-1)H-D)) \leq \frac{D^2-(2t-1)D.H+3t(t-1)+2}{2}$ for some $t > \frac{D.H}{3}$. Then for all $s \geq t$, $h^0(\mathcal O_X((s-1)H-D)) \leq \frac{D^2-(2s-1)D.H+3s(s-1)+2}{2}$ 
	
\end{lemma}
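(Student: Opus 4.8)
The idea is to run an induction on $s$, and at each step extract the needed bound from an exact sequence obtained by restricting to the hyperplane section. Recall that a general hyperplane section $C \in |H|$ of $X$ is a plane cubic curve, so $C$ is an arithmetic genus $1$ curve with $\mathcal O_C(H) = \mathcal O_C(1)$ of degree $3$. For part $(i)$, fix $s \geq t$ and assume the bound holds at $s$; I want it at $s+1$. The tool is the restriction sequence
\begin{equation*}
0 \to \mathcal O_X(D+sH) \to \mathcal O_X(D+(s+1)H) \to \mathcal O_C\big((D+(s+1)H)|_C\big) \to 0,
\end{equation*}
which gives $h^0(\mathcal O_X(D+(s+1)H)) \leq h^0(\mathcal O_X(D+sH)) + h^0(\mathcal O_C((D+(s+1)H)|_C))$. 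The degree of the restricted bundle on $C$ is $(D+(s+1)H).H = D.H + 3(s+1)$, which is positive (since $D$ is effective and nonzero, $D.H > 0$, and $s \geq t \geq -1$ forces $s+1 \geq 0$), so on the genus-$1$ curve $C$ Riemann–Roch together with the fact that $h^1$ of a positive-degree line bundle on an irreducible genus-$1$ curve vanishes gives $h^0(\mathcal O_C(\cdot)) = D.H + 3(s+1)$ (one must take a little care if $C$ is reducible/nonreduced, but for general $C$ in the very ample system $|H|$ one may take it smooth, or argue directly via $\chi$ and semicontinuity). Plugging the inductive hypothesis in, I get
\begin{equation*}
h^0(\mathcal O_X(D+(s+1)H)) \leq \frac{D^2+(2s+1)D.H+3s(s+1)+2}{2} + D.H + 3(s+1).
\end{equation*}
The remaining task is the purely algebraic check that the right-hand side equals $\frac{D^2 + (2(s+1)+1)D.H + 3(s+1)(s+2) + 2}{2}$; expanding, the difference is $\tfrac{1}{2}(2D.H + 3(s+2)(s+1) - 3s(s+1)) = D.H + 3(s+1)$, which matches. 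This closes the induction and proves $(i)$.

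**Part (ii).** This is the dual statement and I would handle it symmetrically. Here I restrict $\mathcal O_X((s-1)H - D)$ via
\begin{equation*}
0 \to \mathcal O_X((s-1)H-D) \to \mathcal O_X(sH-D) \to \mathcal O_C\big((sH-D)|_C\big) \to 0,
\end{equation*}
wait — I actually want to go \emph{up} in $s$ again, so the inductive step is from $s$ to $s+1$, using
\begin{equation*}
0 \to \mathcal O_X((s-1)H-D) \to \mathcal O_X(sH-D) \to \mathcal O_C\big((sH-D)|_C\big) \to 0,
\end{equation*}
relabeled appropriately, giving $h^0(\mathcal O_X(sH-D)) \leq h^0(\mathcal O_X((s-1)H-D)) + h^0(\mathcal O_C((sH-D)|_C))$. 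The degree of the restriction to $C$ is $(sH-D).H = 3s - D.H$; the hypothesis $s \geq t > D.H/3$ guarantees $3s - D.H > 0$, so again the genus-$1$ Riemann–Roch on $C$ gives $h^0(\mathcal O_C(\cdot)) = 3s - D.H$. Combined with the inductive hypothesis $h^0(\mathcal O_X((s-1)H-D)) \leq \frac{D^2 - (2s-1)D.H + 3s(s-1) + 2}{2}$, I obtain $h^0(\mathcal O_X(sH-D)) \leq \frac{D^2 - (2s-1)D.H + 3s(s-1)+2}{2} + 3s - D.H$, and once more the algebra shows this equals $\frac{D^2 - (2(s+1)-1)D.H + 3(s+1)s + 2}{2}$: the difference between consecutive bounds is $\tfrac12(3(s+1)s - 3s(s-1) - 2D.H) = 3s - D.H$, matching exactly. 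So the induction runs and $(ii)$ follows.

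**Main obstacle.** The arithmetic is entirely routine; the only genuine subtlety is justifying $h^0$ of the restricted sheaf on $C$ being exactly (not merely at most) the degree, which requires $h^1(\mathcal O_C(\cdot)) = 0$. For a \emph{general} member $C$ of $|H|$ this is immediate since $C$ is then a smooth elliptic curve and any line bundle of positive degree on it is nonspecial; to restrict to a general $C$ one invokes semicontinuity of $h^0$ in the family, or simply notes that $H$ being very ample lets us choose $C$ smooth and irreducible. One should also double-check the edge case of part $(i)$: when $t = -1$ and $s = -1$, the quantity $D+(s+1)H = D$ itself, and the restriction argument still applies with $C.D = D.H > 0$. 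Thus the proof is a short induction whose substance is the plane-cubic Riemann–Roch identity and the positive-degree vanishing on $C$, and I expect no deeper difficulty.
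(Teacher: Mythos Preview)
Your proof is correct and follows essentially the same route as the paper: both argue by induction, using the restriction sequence to a hyperplane section $C\in|H|$ and Riemann--Roch on the genus-one curve $C$ to control the increment $h^0(\mathcal O_C((D+(s+1)H)|_C))=D\cdot H+3(s+1)$ (respectively $3s-D\cdot H$), then checking the resulting arithmetic identity. The only difference is cosmetic: you are explicit about needing $h^1$ of the restricted line bundle to vanish (and hence about taking $C$ smooth via Bertini), whereas the paper simply invokes Riemann--Roch on $H$ without comment; your added remark is a legitimate point, since the inequality goes the wrong way if $h^1>0$.
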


\begin{proof}
For $(i)$, assume $h^0(\mathcal O_X(D+tH)) \leq \frac{D^2+(2t+1)D.H+3t(t+1)+2}{2}$ for some $t \geq -1$. By principle of mathematical induction, it suffices to establish the inequality for $s=t+1$. Taking cohomology to the following S.E.S:
\begin{align}
		0 \to \mathcal O_X(D+tH) \to \mathcal O_X(D+(t+1)H) \to \mathcal O_H(D+(t+1)H) \to 0 
\end{align}
we get $h^0(\mathcal O_X(D+(t+1)H)) \leq h^0(\mathcal O_X(D+tH)) +h^0(\mathcal O_H(D+(t+1)H)) \leq \frac{D^2+(2t+1)D.H+3t(t+1)+2}{2}+ h^0(\mathcal O_H(D+(t+1)H))$. Since $t \geq -1$, by Riemann-Roch theorem for line bundles on curves, we get $h^0(\mathcal O_H(D+(t+1)H))= D.H+3(t+1)$. This gives us, $h^0(\mathcal O_X(D+(t+1)H)) \leq \frac{D^2+(2(t+1)+1)D.H+3(t+1)((t+1)+1)+2}{2}$.\\

For $(ii)$, assume $h^0(\mathcal O_X((t-1)H-D)) \leq \frac{D^2-(2t-1)D.H+3t(t-1)+2}{2}$ for some $t > \frac{D.H}{3} $. By principle of mathematical induction, it suffices to establish the inequality for $s=t+1$. Taking cohomology to the following S.E.S:
\begin{align}
	0 \to \mathcal O_X((t-1)H-D) \to \mathcal O_X(tH-D) \to \mathcal O_H(tH-D) \to 0 
\end{align}
we get $h^0(\mathcal O_X(tH-D) \leq h^0(\mathcal O_X((t-1)H-D)) +h^0(\mathcal O_H(tH-D)) \leq \frac{D^2-(2t-1)D.H+3t(t-1)+2}{2} + h^0(\mathcal O_H(tH-D))$. Since $t > \frac{D.H}{3}$, by Riemann-Roch theorem for line bundles on curves, we get $h^0(\mathcal O_H(tH-D)) = 3t-D.H$. This gives us $h^0(\mathcal O_X(tH-D) \leq \frac{D^2-(2(t+1)-1)D.H+3(t+1)t+2}{2}$.

\end{proof}

Now we prepare a technical result which is crucial for the proofs of the two main theorems of this article, i.e. Theorem \ref{T31} and Theorem \ref{T35} regarding the classification of $1$-away and $2$-away ACM line bundles. Being technical, we isolate a few cases i.e. divisors of degrees $2,3,4$ with specific self intersection numbers. To be more precise, for certain divisors $D$ on $X$, the following proposition guarantess the vanishing of $h^1(\mathcal O_X(D+kH))=0$ for some $k \in \mathbb Z$ as the base case of the induction process which then allows us to further apply the previous lemma (i.e. Lemma \ref{L1}) to complete the induction process.\



	


\begin{proposition}\label{P2}
	Let $\Gamma$ be a non-zero effective divisor on $X$. Then the following holds:
	
	$(i)$ Let $(\Gamma^2, \Gamma.H)=(-2,2)$. Then $h^0(\mathcal O_X(\Gamma))=1$.\
	
	$(ii)$ Let $(\Gamma^2, \Gamma.H)=(-3,3)$. Then $h^0(\mathcal O_X(\Gamma))=1$.\
	
	$(iii)$ Let $(\Gamma^2, \Gamma.H)=(-1,3)$. Then $h^0(\mathcal O_X(\Gamma))=2$.\
	
	$(iv)$ Let $(\Gamma^2, \Gamma.H)=(0,4)$. Then $h^0(\mathcal O_X(\Gamma))=3$.\

\end{proposition}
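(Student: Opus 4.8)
The plan is to handle each of the four cases by the same mechanism: combine the Riemann--Roch formula \eqref{RR} with a vanishing statement for $h^1$ (or equivalently $h^2$, via Serre duality) in order to pin down $h^0(\mathcal O_X(\Gamma))$ exactly. Since $\chi(\mathcal O_X(\Gamma)) = \frac{\Gamma(\Gamma-K_X)}{2}+1 = \frac{\Gamma^2 + \Gamma.H}{2}+1$, plugging in the four pairs $(\Gamma^2,\Gamma.H)$ gives $\chi = 1, 1, 2, 3$ respectively in cases (i)--(iv). So in each case it suffices to show $h^1(\mathcal O_X(\Gamma)) = h^2(\mathcal O_X(\Gamma)) = 0$, and then $h^0 = \chi$ gives the claimed value. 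By Serre duality $h^2(\mathcal O_X(\Gamma)) = h^0(\mathcal O_X(K_X - \Gamma)) = h^0(\mathcal O_X(-H-\Gamma))$, which vanishes because $-H-\Gamma$ has negative intersection with the ample class $H$ (indeed $(-H-\Gamma).H = -3 - \Gamma.H < 0$), so it cannot be effective. Thus the entire burden is the vanishing of $h^1(\mathcal O_X(\Gamma))$.

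For the $h^1$-vanishing I would argue via the restriction sequence to a hyperplane section $H$ (an elliptic curve, since $-K_X = H$ restricts to the canonical-ish class — more precisely $H|_H$ has degree $3$ on the genus-$1$ curve $H$). Writing $\Gamma = \Gamma$ and using
\[
0 \to \mathcal O_X(\Gamma - H) \to \mathcal O_X(\Gamma) \to \mathcal O_H(\Gamma) \to 0,
\]
one gets $h^1(\mathcal O_X(\Gamma)) \le h^1(\mathcal O_X(\Gamma-H)) + h^1(\mathcal O_H(\Gamma))$. On the elliptic curve $H$, the line bundle $\mathcal O_H(\Gamma)$ has degree $\Gamma.H \in \{2,3,3,4\} > 0$, so $h^1(\mathcal O_H(\Gamma)) = 0$ by Riemann--Roch on curves. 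Then one is reduced to $h^1(\mathcal O_X(\Gamma - H))$, and I would iterate: $\Gamma - H$ has $H$-degree $\Gamma.H - 3 \le 1$, so after one more twist down the restriction to $H$ has nonpositive degree and the inductive control becomes more delicate. The clean alternative is to go the other direction and instead use that $h^1(\mathcal O_X(\Gamma)) = h^1(\mathcal O_X(K_X - \Gamma)) = h^1(\mathcal O_X(-H-\Gamma))$ by Serre duality, and bound \emph{that} using the sequence twisted by $H$ upward — but the genuinely efficient route is to identify each such $\Gamma$ explicitly. In each of the four cases, the numerical data $(\Gamma^2, \Gamma.H)$ together with $\Gamma$ effective and $p_a(\Gamma) = \frac{\Gamma(\Gamma+K_X)}{2}+1 = \frac{\Gamma^2 - \Gamma.H}{2}+1 \in \{-1, -1, 0, 1\}$ forces the structure of $\Gamma$: in cases (i),(ii) $p_a(\Gamma) = -1$ so $\Gamma$ is a disjoint union of $\ge 2$ rational curves (and the degree/self-intersection constraints leave only a short list, e.g. two skew lines, or a line plus a conic configuration), in case (iii) $p_a = 0$ forces $\Gamma$ connected rational or a disjoint union summing correctly, and in case (iv) $p_a(\Gamma) = 1$. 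For each configuration on the list, $h^1$ can be computed directly (a smooth rational curve $C$ with $C^2 = -1$ or via the structure sequence $0 \to \mathcal O_X \to \mathcal O_X(\Gamma) \to \mathcal O_\Gamma(\Gamma) \to 0$, where $h^1(\mathcal O_\Gamma(\Gamma))$ is controlled by the self-intersection and the components).

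Concretely, then, the order of steps I would follow is: (1) compute $\chi(\mathcal O_X(\Gamma))$ from \eqref{RR} in each case, getting $1,1,2,3$; (2) kill $h^2$ via Serre duality and the observation that $-H - \Gamma$ is not effective; (3) prove $h^1(\mathcal O_X(\Gamma)) = 0$ — for which I would use the structure sequence $0 \to \mathcal O_X \to \mathcal O_X(\Gamma) \to \mathcal O_\Gamma(\Gamma) \to 0$, note $h^1(\mathcal O_X) = 0$ (rational surface) and $h^0(\mathcal O_X(\Gamma)) \ge 1$ gives $h^0(\mathcal O_\Gamma(\Gamma)) \ge 0$, and then show $h^1(\mathcal O_\Gamma(\Gamma)) = 0$ by analyzing $\Gamma$ component by component using the adjunction/genus data above; (4) conclude $h^0 = \chi$. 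The main obstacle is step (3), the vanishing of $h^1(\mathcal O_\Gamma(\Gamma))$: this requires knowing enough about how $\Gamma$ decomposes into irreducible curves on the cubic surface — one must rule out, or handle, the possibility that $\Gamma$ contains a component forcing extra sections. I expect this is where the "general position of the six blown-up points" hypothesis (hence the finite list of $(-1)$-curves and low-degree curves on $X$, via Theorem \ref{cubic} and the nefness criterion Theorem \ref{N}) gets used, reducing to finitely many explicit configurations each of which is checked by a one-line cohomology computation.
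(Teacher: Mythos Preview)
Your overall architecture is sound and, at the level of endpoints, equivalent to what the paper does: since $h^2(\mathcal O_X(\Gamma))=0$ and $\chi(\mathcal O_X(\Gamma))$ takes the values $1,1,2,3$, the statement $h^0=\chi$ is equivalent to $h^1(\mathcal O_X(\Gamma))=0$. Where your proposal and the paper diverge is in \emph{how} that last equality is established, and this is also where your proposal has a genuine gap.

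First, two slips. Your arithmetic genera are off: with $K_X=-H$ one has $p_a(\Gamma)=\tfrac{\Gamma^2-\Gamma.H}{2}+1$, giving $-1,-2,-1,-1$ in cases (i)--(iv), not $-1,-1,0,1$. In particular all four $\Gamma$'s are forced to be reducible or non-reduced, and case (iv) does \emph{not} give $p_a=1$. Second, your suggestion to control $h^1(\mathcal O_X(\Gamma))$ via the structure sequence $0\to\mathcal O_X\to\mathcal O_X(\Gamma)\to\mathcal O_\Gamma(\Gamma)\to 0$ and then show $h^1(\mathcal O_\Gamma(\Gamma))=0$ ``component by component'' is exactly the delicate point, and you do not carry it out. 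In cases (iii) and (iv) the divisor $\Gamma$ can be non-reduced (e.g.\ a line plus a double line, or $2D_1+2D_2$, or $2Q$ for a conic $Q$), and analyzing cohomology of a line bundle of degree $\Gamma^2\le 0$ on a non-reduced curve is not a one-liner.

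The paper sidesteps this entirely: rather than prove $h^1=0$ and deduce $h^0$, it computes $h^0(\mathcal O_X(\Gamma))$ directly. It first uses $p_a(\Gamma)<0$ to force a decomposition $\Gamma=\Gamma_1+\Gamma_2$, pins down the possible configurations from the intersection numbers, and then peels off one smooth component $L$ (always a line or conic on $X$) at a time by tensoring the sequence $0\to\mathcal O_X(-L)\to\mathcal O_X\to\mathcal O_L\to 0$ with $\mathcal O_X(\Gamma)$ and reading off $h^0$ from the resulting exact sequence. Every restriction is to a smooth $\mathbb P^1$, so the cohomology on the right is trivial to evaluate, and after finitely many steps one lands on $h^0(\mathcal O_X)=1$ or on an ACM line bundle whose $h^0$ is known from Riemann--Roch. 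No cohomology on non-reduced curves ever appears. Your plan would ultimately require the same enumeration of decompositions, but the paper's implementation on $X$ is cleaner than the route through $\mathcal O_\Gamma(\Gamma)$ you sketched.
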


\begin{proof}
Let $\Gamma$ be as in any one of the above four cases. Then its arithmetic genus $p_a(\Gamma) <0$. Thus $\Gamma$ is either not reduced or not irreducible. Therefore, there is a nontrivial effective decomposition $\Gamma=\Gamma_1+\Gamma_2$. Without loss of generality, we can assume that if $\Gamma$ is not irreducible, then $\Gamma_1$ and $ \Gamma_2$ can't have a common component. With this convention of the decomposition, we proceed to analyse each of the four cases as follows.\
	
$(i)$ Let $(\Gamma^2, \Gamma.H)=(-2,2)$. Since $\Gamma.H=2$, we must have $\Gamma_i.H=1$ ($i=1,2$) i.e. $\Gamma_i$'s are lines on $X$ and hence $\Gamma^2_i =-1$ ($i=1,2$). By considering the expression of $\Gamma^2$, we get $\Gamma_1.\Gamma_2=0$. Therefore, we have $\Gamma = \Gamma_1+\Gamma_2$ with $\Gamma_i.H=1$ ($i=1,2$) and $\Gamma_1.\Gamma_2=0$.  Tensorizing  the S.E.S:
\begin{align}\label{ss}
	0 \to \mathcal O_X(-\Gamma_1) \to \mathcal O_X \to \mathcal O_{\Gamma_1} \to 0
\end{align}
by $\mathcal O_X(\Gamma_1+\Gamma_2)$ and taking cohomology, we get $h^0(\mathcal O_X(\Gamma))=h^0(\mathcal O_X(\Gamma_1+\Gamma_2))=h^0(\mathcal O_X(\Gamma_2))$ (since $\Gamma_1$ is a line on $X$ and hence ACM, $\Gamma_1^2=-1$ and $\Gamma_1.\Gamma_2=0$). Again tensorizing the S.E.S \ref{ss} (with $\Gamma_1$ replaced by $\Gamma_2$) by $\mathcal O_X(\Gamma_2)$ and taking cohomology,
we get $h^0(\mathcal O_X(\Gamma_2))=h^0(\mathcal O_X)=1$ (since $\Gamma_2^2=-1$).\\

$(ii)$ Let  $(\Gamma^2, \Gamma.H)=(-3,3)$.  Without loss of generality, assume that $\Gamma_1.H=1, \Gamma_2.H=2$. Note that $\Gamma_2 \neq 2.\Gamma_1$, otherwise $\Gamma^2=-9$ (a contradiction). By the choice of decomposition, we must have, $\Gamma_1.\Gamma_2 \geq 0$ and hence considering the expression of $\Gamma^2$, we see that $\Gamma_2^2 \leq -2$ and hence $p_a(\Gamma_2) <0$. Thus, there is a nontrivial effective decomposition $\Gamma_2=\Gamma_3+\Gamma_4$ with $\Gamma_i.H=1$, $i\in \{3,4\}$. Note that $\Gamma_3 \neq \Gamma_4$, otherwise considering the expression of $\Gamma^2$, one obtains $4\Gamma_1.\Gamma_3=2$, a contradiction. Therefore, $\Gamma_2^2=-2, \Gamma_3.\Gamma_4=0$. Then by the choice of the decomposition and considering the expression for $\Gamma^2$, it can be deduced that $\Gamma=\Gamma_1+\Gamma_3+\Gamma_4$ with $\Gamma_i.\Gamma_j=0$ for $i \neq j \in \{1,3,4\}$. Tensorizing the S.E.S \ref{ss} by $\mathcal O_X(\Gamma_1+\Gamma_3+\Gamma_4)$ and taking cohomology, we get $h^0(\mathcal O_X(\Gamma))= h^0(\mathcal O_X(\Gamma_3+\Gamma_4))$ (since $\Gamma_1^2=-1$, $\Gamma_i.\Gamma_j=0$ for $i \neq j \in \{1,3,4\}$). Tensorizing the S.E.S \ref{ss} (with $\Gamma_1$ replaced by $\Gamma_3$) by $\mathcal O_X(\Gamma_3+\Gamma_4)$ and taking cohomology, we get $ h^0(\mathcal O_X(\Gamma_3+\Gamma_4))= h^0(\mathcal O_X(\Gamma_4))$ (since $\Gamma_3^2=-1$, $\Gamma_i.\Gamma_j=0$ for $i \neq j \in \{1,3,4\}$). Finally, tensorizing the S.E.S \ref{ss} (with $\Gamma_1$ replaced by $\Gamma_4$) by $\mathcal O_X(\Gamma_4)$ and taking cohomology, we get $ h^0(\mathcal O_X(\Gamma_4))= h^0(\mathcal O_X)=1$ (since $\Gamma_4^2=-1$).\\


$(iii)$ Let  $(\Gamma^2, \Gamma.H)=(-1,3)$.  Without loss of generality, assume that $\Gamma_1.H=1, \Gamma_2.H=2$. Note that $\Gamma_2 \neq 2.\Gamma_1$, otherwise $\Gamma^2=-9$ (a contradiction). By the choice of decomposition, we must have, $\Gamma_1.\Gamma_2 \geq 0$ and hence considering the expression of $\Gamma^2$, we see that $\Gamma_2^2 \leq 0$. We first consider the situation where  $\Gamma_2^2 \leq -1$. Since $p_a(\Gamma_2) <0$, there is a nontrivial effective decomposition $\Gamma_2=\Gamma_3+\Gamma_4$ with $\Gamma_i.H=1$, $i\in \{3,4\}$. If $\Gamma_3=\Gamma_4$, then it can be deduced that $\Gamma=\Gamma_1+2.\Gamma_4$ with $\Gamma_1.\Gamma_4=1$ (by the choice of decomposition and considering the expression of $\Gamma^2$). Tensorizing the S.E.S \ref{ss} (with $\Gamma_1$ replaced by $\Gamma_4$)
by $\mathcal O_X(\Gamma_1+2.\Gamma_4)$ and taking cohomology we have $h^0(\mathcal O_X(\Gamma))=h^0(\mathcal O_X(\Gamma_1+\Gamma_4))$ (since $\Gamma_4^2=-1, \Gamma_1.\Gamma_4=1$). Tensorizing the S.E.S \ref{ss} by $\mathcal O_X(\Gamma_1+\Gamma_4)$ and noting that $\mathcal O_X(\Gamma_1)$ is ACM (by Theorem \ref{Ton}, $(3)$), we get $h^0(\mathcal O_X(\Gamma_1+\Gamma_4))=2$.  If $\Gamma_3 \neq \Gamma_4$, then without loss of generality, one can rewrite $\Gamma=\Gamma_1+\Gamma_3+\Gamma_4$ with $\Gamma_3.\Gamma_4=0, \Gamma_1.\Gamma_4=0, \Gamma_1.\Gamma_3=1$ (by the choice of decomposition and considering the expressions of $\Gamma_2^2, \Gamma^2$). Tensorizing the S.E.S \ref{ss} (with $\Gamma_1$ replaced by $\Gamma_4$) by $\mathcal O_X(\Gamma)$  and taking cohomology, we have $h^0(\mathcal O_X(\Gamma))=h^0(\mathcal O_X(\Gamma_1+\Gamma_3))$ (since $\Gamma_4^2=-1, \Gamma_1.\Gamma_4=0$, $\Gamma_3.\Gamma_4=0$). Again tensorizing the S.E.S \ref{ss} (with $\Gamma_1$ replaced by $\Gamma_3$) by $\mathcal O_X(\Gamma_1+\Gamma_3)$  and taking cohomology, we have $h^0(\mathcal O_X(\Gamma_1+\Gamma_3))=2$ (since $\Gamma_3^2=-1, \Gamma_1.\Gamma_3=1$ and $\mathcal O_X(\Gamma_1)$ is ACM).\


 Next, we deal with the remaining case when $\Gamma_2^2=0$. Tensorizing the S.E.S \ref{ss} by $\mathcal O_X(\Gamma_1+\Gamma_2)$ and taking cohomology, we yield $h^0(\mathcal O_X(\Gamma)) = h^0(\mathcal O_X(\Gamma_2))$ (since $\Gamma_1^2=-1, \Gamma_1.\Gamma_2=0$). Since by  Theorem \ref{Ton}, $(3)$, $\mathcal O_X(\Gamma_2)$ is ACM, by \ref{RR} applied to $\mathcal O_X(\Gamma_2)$, we obtain  $h^0(\mathcal O_X(\Gamma_2))=2$ and we are done.\\

 $(iv)$ Let  $(\Gamma^2, \Gamma.H)=(0,4)$. As before, we have $p_a(\Gamma)<0$ and thus we have a nontrivial effective decomposition of $\Gamma$. We maintain the convention that if $\Gamma$ is not irreducible, then the decomposed parts do not have a common component. We divide the analysis of this case into two parts:
 
 \underline{\textbf{Part 1: $\Gamma$ contains a line say $D_1$ as a component}}
 
 We arrange the decomposition of $\Gamma$ taking into account the maximum possible multiplicity of $D_1$. Since the decomposed parts do not have a common component, it is enough to consider the following seven cases.
 \begin{itemize}
 	\item $\Gamma=4D_1$. Since $D_1$ is a line on $X$ and hence $D_1^2=-1$, one obtains $\Gamma^2=-16$, a contradiction. Therefore, this case can't occur.\
 	
 	\item $\Gamma=3D_1+D_2$, where $D_2$ is a line on $X$ with $D_1.D_2 \geq 0$. Since $D_i^2=-1$ for $i=1,2$, considering the expression of $\Gamma^2$, one obtains $6D_1.D_2=10$, a contradiction. Therefore, this case can't occur.\
 	
 	\item $\Gamma=2D_1+2D_2$, where $D_2$ is a line on $X$ with $D_1.D_2 \geq 0$. Since $D_i^2=-1$ for $i=1,2$, considering the expression of $\Gamma^2$, one obtains $D_1.D_2=1$. Note that by \ref{RR} applied to $\mathcal O_X(\Gamma)$, one gets $h^0(\mathcal O_X(\Gamma)) \geq 3$. Therefore, it is enough to show $h^0(\mathcal O_X(\Gamma)) \leq 3$. Tensorizing \ref{ss} (with $\Gamma_1$ replaced by $D_1$) by $\mathcal O_X(\Gamma)$ and taking cohomology, we get $h^0(\mathcal O_X(\Gamma)) \leq 1+h^0(\mathcal O_X(D_1+2D_2))$. Since $D_1+D_2$ is ACM by Theorem \ref{Ton}, $(3)$, tensorizing the S.E.S \ref{ss} (with $\Gamma_1$ replaced by $D_2$) by $\mathcal O_X(D_1+2D_2)$  and taking cohomology, we get $h^0(\mathcal O_X(D_1+2D_2))=h^0(\mathcal O_X(D_1+D_2))$. The fact that $h^0(\mathcal O_X(D_1+D_2))=2$ follows from applying \ref{RR} to $\mathcal O_X(D_1+D_2)$ and we are through.\

 	
 	\item $\Gamma=2D_1+D_2+D_3$, where $D_2,D_3$ are lines on $X$ with $D_1.D_2 \geq 0, D_1.D_3 \geq 0, D_2.D_3 \geq 0$. Since $D_i^2=-1$ for $i=1,2$, considering the expression of $\Gamma^2$, one obtains without loss of generality $D_1.D_2=1, D_1.D_3=0, D_2.D_3=1$. Tensorizing the S.E.S \ref{ss} (with $\Gamma_1$ replaced by $D_1$) by $\mathcal O_X(\Gamma)$ and taking cohomology, we get $h^0(\mathcal O_X(\Gamma)=h^0(\mathcal O_X(D_1+D_2+D_3))$. Since $D_1+D_2$ is ACM by Theorem \ref{Ton},$(3)$, tensorizing the S.E.S \ref{ss} (with $\Gamma_1$ replaced by $D_3$) with $\mathcal O_X(D_1+D_2+D_3)$ and taking cohomology, we get $h^0(\mathcal O_X(D_1+D_2+D_3))=h^0(\mathcal O_X(D_1+D_2))+1$. The fact that $h^0(\mathcal O_X(D_1+D_2))=2$ follows from applying \ref{RR} to $\mathcal O_X(D_1+D_2)$ and we are through.\

 	
 	\item $\Gamma=2D_1+Q$, where $Q$ is a reduced and irreducible divisor on $X$ with $H.Q=2, D_1.Q \geq 0$. It is easy to see that $Q^2=0$ and hence considering the expression of $\Gamma^2$, one obtains $D_1.Q =1$. Again as before, one can use S.E.S \ref{ss} to get $h^0(\mathcal O_X(\Gamma))= h^0(\mathcal O_X(D_1+Q))$. Since $\mathcal O_X(Q)$ is ACM by Theorem \ref{Ton}, $(3)$, tensorizing the S.E.S \ref{ss} (with $\Gamma_1$ replaced by $D_1$) by $\mathcal O_X(D_1+Q)$ and taking cohomology we get $h^0(\mathcal O_X(D_1+Q))=1+h^0(\mathcal O_X(Q))$. The fact that $h^0(\mathcal O_X(Q))=2$ follows from applying \ref{RR} to $\mathcal O_X(Q)$ and we are through.\

 	\item $\Gamma=D_1+D_2+D_3+D_4$, where $D_i$'s are lines on $X$ for $i=1,2,3,4$ with $D_i.D_j \geq 0$ for $i \neq j \in \{1,2,3,4\}$. Since $D_i^2=-1$ for $i \in \{1,2,3,4\}$, considering the expression of $\Gamma^2$, without loss of generality, we can write $D_1.D_2=D_1.D_3=1$ and the remaining such pairwise intersection numbers are $0$. Tensorizing the S.E.S \ref{ss} (with $\Gamma_1$ replaced by $D_4$) by $\mathcal O_X(\Gamma)$ and taking cohomology, we get $h^0(\mathcal O_X(\Gamma)=h^0(\mathcal O_X(D_1+D_2+D_3))$. Since $D_1+D_2$ is ACM by Theorem \ref{Ton},$(3)$, tensorizing the S.E.S \ref{ss} (with $\Gamma_1$ replaced by $D_3$) by $\mathcal O_X(D_1+D_2+D_3)$ and taking cohomology, we get $h^0(\mathcal O_X(D_1+D_2+D_3))=h^0(\mathcal O_X(D_1+D_2))+1$. The fact that $h^0(\mathcal O_X(D_1+D_2))=2$ follows from applying \ref{RR} to $\mathcal O_X(D_1+D_2)$ and we are through.\

    \item $\Gamma=D_1+C$, where $C$ is a reduced and irreducible divisor on $X$ with $C.H=3, D_1.C \geq 0$. It is easy to see that $C^2=1$ and hence considering the expression of $\Gamma^2$, one obtains $D_1.C =0$. Tensorizing the S.E.S \ref{ss} (with $\Gamma_1$ replaced by $D_1$) with $\mathcal O_X(\Gamma)$ and taking cohomology, we get $h^0(\mathcal O_X(\Gamma))=h^0(\mathcal O_X(C))$. The fact that $h^0(\mathcal O_X(C))=3$ follows from applying \ref{RR} to $\mathcal O_X(C)$ and we are through.
 	\end{itemize}
\underline{\textbf{Part 2: $\Gamma$ does not contain a line  as a component}}

In this case, it is enough to consider the following two possibilities:
\begin{itemize}

 \item $\Gamma= \Gamma_1+\Gamma_2$, where $\Gamma_1$ and $\Gamma_2$ are irreducible and reduced divisor on $X$ with $\Gamma_i.H=2$ for $i=1,2$ and $\Gamma_1.\Gamma_2 \geq 0$. It is easy to see that $\Gamma_i^2=0$ for $i=1,2$ and hence considering the expression of $\Gamma^2$, one gets $\Gamma_1. \Gamma_2=0$. Since $\Gamma_i$'s are ACM by Theorem \ref{Ton}, $(3)$, tensorizing the S.E.S \ref{ss} with $\mathcal O_X(\Gamma)$ and taking cohomology, we get $h^0(\mathcal O_X(\Gamma))=1+h^0(\mathcal O_X(\Gamma_2))$. The fact that $h^0(\mathcal O_X(\Gamma_2))=2$ follows from applying \ref{RR} to $\mathcal O_X(\Gamma_2)$ and we are done.\
 
 \item $\Gamma= 2.\Gamma_1$, where $\Gamma_1$ is an irreducible and reduced divisor on $X$ with $ \Gamma_1.H=2$. Since $\Gamma_1$ is ACM by Theorem \ref{Ton}, $(3)$, tensorizing the S.E.S \ref{ss} with $\mathcal O_X(\Gamma)$ and taking cohomology, we get $h^0(\mathcal O_X(\Gamma))=1+h^0(\mathcal O_X(\Gamma_1))$. The fact that $h^0(\mathcal O_X(\Gamma_1))=2$ follows from applying \ref{RR} to $\mathcal O_X(\Gamma_1)$ and we are done.\
  
\end{itemize}

\end{proof}

Let $X \subset \mathbb{P}^3$ be a cubic surface and $H$ be an elliptic curve in the linear system $|\mathcal{O}_{\mathbb{P}^3}(1)|_{X}|$. We will need the following lemma to prove the connectedness of the first cohomology module in Theorem \eqref{Ext}. This lemma can also be of general importance.\ 
\begin{lemma}\label{general}
	Let $\mathcal{L}$ be a line bundle on $X$ such that $H^1(X, \mathcal{L})=0$. Then for all $n>0$ we will have
	\begin{itemize}
		\item[(1)] The vanishing $H^1(X, \mathcal{L}-nH)=0$ if $\text{deg}(\mathcal{L}_{|H}) < 0$.
		\item[(2)] The vanishing $H^1(X, \mathcal{L}+nH)=0$ if $\text{deg}(\mathcal{L}_{|H}) \ge 0$.
	\end{itemize}
\end{lemma}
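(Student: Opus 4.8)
The plan is to exploit the standard hyperplane restriction sequence
\[
0 \to \mathcal{O}_X(-H) \to \mathcal{O}_X \to \mathcal{O}_H \to 0,
\]
tensored by the appropriate twist of $\mathcal{L}$, together with the fact that $H$ is a nonsingular plane cubic, hence an elliptic curve of degree $3$ in $\mathbb{P}^3$, so $\mathcal{O}_H(H) = \mathcal{O}_{\mathbb{P}^3}(1)|_H$ has degree $3$. First I would treat case (2). Tensoring the sequence above by $\mathcal{L} \otimes \mathcal{O}_X(nH)$ and taking cohomology gives the exact piece
\[
H^1(X, \mathcal{L}+(n-1)H) \to H^1(X, \mathcal{L}+nH) \to H^1(H, \mathcal{L}_{|H}+nH_{|H}).
\]
By induction on $n \ge 0$ (the base case $n=0$ being the hypothesis $H^1(X,\mathcal{L})=0$), it suffices to show the rightmost term vanishes for every $n > 0$. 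On the elliptic curve $H$, a line bundle of nonnegative degree that is \emph{strictly} positive has vanishing $H^1$; so the only issue is the degree-zero case. Here $\deg(\mathcal{L}_{|H} + nH_{|H}) = \deg(\mathcal{L}_{|H}) + 3n$, which for $n \ge 1$ is at least $3 > 0$ whenever $\deg(\mathcal{L}_{|H}) \ge 0$; thus $H^1(H, \mathcal{L}_{|H}+nH_{|H}) = 0$ and the induction closes.

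For case (1), I would run the dual induction, going downward in twist. Using the same sequence tensored by $\mathcal{L} \otimes \mathcal{O}_X((-n+1)H)$ one gets
\[
H^1(X, \mathcal{L}-nH) \to H^1(X, \mathcal{L}-(n-1)H) \to H^1(H, \mathcal{L}_{|H} - (n-1)H_{|H}),
\]
which is not immediately in the convenient direction; instead I would use the other exact piece, namely
\[
H^0(H, \mathcal{L}_{|H} - (n-1)H_{|H}) \to H^1(X, \mathcal{L} - nH) \to H^1(X, \mathcal{L} - (n-1)H).
\]
Again inducting on $n \ge 1$ with base case $H^1(X,\mathcal{L}) = 0$, the right-hand term vanishes by the inductive hypothesis, so it suffices that $H^0(H, \mathcal{L}_{|H} - (n-1)H_{|H}) = 0$ for all $n \ge 1$. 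Since $\deg(\mathcal{L}_{|H} - (n-1)H_{|H}) = \deg(\mathcal{L}_{|H}) - 3(n-1)$, and $\deg(\mathcal{L}_{|H}) < 0$ forces this to be negative (indeed $\le -1$) for every $n \ge 1$, a line bundle of negative degree on a curve has no global sections, so $H^0$ vanishes and the induction closes.

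The only genuine subtlety — and the step I would flag as the main obstacle — is the borderline case on the elliptic curve $H$ when a restricted bundle has degree exactly $0$: such a bundle may or may not be trivial, and $h^1$ jumps accordingly. The argument above sidesteps this entirely because in case (2) the relevant degrees are $\ge 3$ for $n \ge 1$, and in case (1) the relevant degrees are $\le -1$ for $n \ge 1$; in neither induction does a degree-$0$ bundle on $H$ ever appear. One should double-check that $\deg(\mathcal{L}_{|H})$ is computed correctly — it equals $\mathcal{L} \cdot H$ as an intersection number on $X$ — and that $H^1(X,\mathcal{L}) = 0$ is used only as the base case; everything else is the projection formula plus Riemann–Roch (or just vanishing of $H^0$/$H^1$ for bundles of negative/positive degree) on a smooth genus-one curve. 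Finally I would remark that the hypothesis that $H$ is smooth (an honest elliptic curve) is used only to know it is an integral curve of arithmetic genus one, so the statement and proof go through for any smooth hyperplane section.
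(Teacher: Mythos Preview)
Your proof is correct and follows essentially the same route as the paper: both arguments tensor the hyperplane restriction sequence by suitable twists of $\mathcal{L}$, use that a line bundle of negative degree on the elliptic curve $H$ has no sections (for the downward induction) and that a line bundle of positive degree has vanishing $H^1$ (for the upward induction), and induct from the hypothesis $H^1(X,\mathcal{L})=0$. The only differences are cosmetic index shifts and your extra commentary on the degree-zero borderline case, which (as you correctly note) never actually arises.
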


\begin{proof}
	For any $m \in \mathbb{Z}$, we will have the following S.E.S
	\begin{equation}
		\label{lawaylemma}
		0 \rightarrow \mathcal{O}_X(\mathcal{L}+(m-1)H) \rightarrow \mathcal{O}_X(\mathcal{L}+mH) \rightarrow \mathcal{O}_H(\mathcal{L}+mH) \rightarrow 0
	\end{equation}
	The degree of the line bundle $\mathcal{O}_H(\mathcal{L}+mH)$ over the elliptic curve $H$ is $\text{deg}(\mathcal{L}_{|H}) + m \cdot H^2=\text{deg}(\mathcal{L}_{|H}) + 3m$. Let us assume $\text{deg}(\mathcal{L}_{|H}) < 0$ and $m \le 0$. Thus, $\text{deg}(\mathcal{L}_{|H}) + 3m <0$ and hence $H^0(H, \mathcal{O}_H(\mathcal{L}+mH))=0$. We set $n=-m$. Then taking the long exact sequence for \eqref{lawaylemma}, we get the exact sequence
	$$ H^0(H, \mathcal{O}_H(\mathcal{L}-nH)) \rightarrow H^1(X, \mathcal{O}_X(\mathcal{L}+(-n-1)H)) \rightarrow H^1(X, \mathcal{O}_X(\mathcal{L}-nH)) \rightarrow H^1(H, \mathcal{O}_H(\mathcal{L}-nH)) $$ 
	Since $H^0(H, \mathcal{O}_H(\mathcal{L}-nH))=0$ for all $n \ge 0$, we get $H^1(X, \mathcal{O}_X(\mathcal{L}+(-n-1)H)) \subseteq H^1(X, \mathcal{O}_X(\mathcal{L}-nH))$. Since $H^1(X, \mathcal{L})=0$, by induction we get $H^1(X, \mathcal{L}-nH)=0$ for all $n >0$.\\
	
	For the other case, we have the assumption $\text{deg}(\mathcal{L}_{|H}) \ge 0$. Thus, for any integer $m > 0$, we have $\text{deg}(\mathcal{L}_{|H}) + 3m >0$. Therefore, $H^1(H, \mathcal{O}_H(\mathcal{L}+mH))=0$. By taking the long exact sequence for the S.E.S \eqref{lawaylemma} we get the exact sequence
	$$H^1(X, \mathcal{O}_X(\mathcal{L}+(m-1)H)) \rightarrow H^1(X, \mathcal{O}_X(\mathcal{L}+mH)) \rightarrow H^1(H, \mathcal{O}_H(\mathcal{L}+mH))$$
	Since $H^1(H, \mathcal{O}_H(\mathcal{L}+mH))=0$ for all $m > 0$, we get the surjection $$H^1(X, \mathcal{O}_X(\mathcal{L}+(m-1)H)) \rightarrow H^1(X, \mathcal{O}_X(\mathcal{L}+mH)).$$
	Since $H^1(X, \mathcal{L})=0$, by induction we get $H^1(X, \mathcal{L}+nH)=0$ for all $n>0$.
\end{proof}

The following lemma characterizes initialized divisors on $X$ satisfying certain precise relation between its degree and self intersection number in terms of nefness. This lemma will be useful in studying $\ell (\geq 2)$-away ACM line bundles on $X$ and preparing the explicit list of divisors in the later part of this paper.\ 

\begin{lemma}\label{nef}
Let $D$ be a non-zero effective divisor on $X$ such that $D^2=D.H-2$   and $D.H \geq 3$. Then $\mathcal O_X(D)$ is initialized  if and only if $D$ is nef.
\end{lemma}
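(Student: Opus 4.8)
The plan is to prove the two implications separately, using throughout that $K_X=-H$, that $H^2=3$, and that the hypothesis $D^2=D.H-2$ is equivalent to $p_a(D)=\tfrac{D^2-D.H}{2}+1=0$; Riemann--Roch \eqref{RR} will handle the bookkeeping. For the direction ``$D$ nef $\Rightarrow\mathcal O_X(D)$ initialized'', note that $h^0(\mathcal O_X(D))\ne 0$ holds automatically since $D$ is effective, so it suffices to show $h^0(\mathcal O_X(D-H))=0$. I would argue by contradiction: if $D-H$ is linearly equivalent to an effective divisor $E$, then $D\sim H+E$ gives $D.H=3+E.H$ and $D^2=D.H+D.E$, and feeding these into $D^2=D.H-2$ forces $D.E=-2$, which is impossible for $D$ nef and $E$ effective (note $D\not\sim H$, since $H^2=3\ne 1$, so this genuinely is a contradiction). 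This direction is short.

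For the converse I would establish the contrapositive: if $D$ is not nef, then $\mathcal O_X(D)$ is not initialized. Non-nefness of the effective divisor $D$ yields an irreducible curve $C$ with $D.C<0$; since $D$ is effective, $C$ must be a component of $D$ with $C^2<0$, and by adjunction every irreducible curve of negative self-intersection on a smooth cubic surface is a line (a $(-1)$-curve), so we get a line $L$ with $D.L<0$. I would then ``peel off'' lines: given an effective divisor $D_i$ that is not nef, choose a line $L_{i+1}$ with $D_i.L_{i+1}<0$ and set $D_{i+1}=D_i-L_{i+1}$; the restriction sequence $0\to\mathcal O_X(D_i-L_{i+1})\to\mathcal O_X(D_i)\to\mathcal O_{L_{i+1}}(D_i)\to 0$ has $\deg\mathcal O_{L_{i+1}}(D_i)=D_i.L_{i+1}<0$, so $H^0(\mathcal O_{L_{i+1}}(D_i))=0$, whence $h^0(\mathcal O_X(D_{i+1}))=h^0(\mathcal O_X(D_i))\ge 1$ and $D_{i+1}$ is again effective. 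A direct computation gives $p_a(D_{i+1})=p_a(D_i)-D_i.L_{i+1}\ge p_a(D_i)+1$, while $D_i.H$ drops by $1$ at each step, so the procedure terminates after some $k\ge 1$ steps at an effective divisor $D_k$ with $p_a(D_k)\ge k\ge 1$; here the hypothesis $D.H\ge 3$ is exactly what rules out $D_k=0$ (otherwise $D=L_1+\cdots+L_k$ would give $1=p_a(0)=p_a(D_k)\ge k=D.H\ge 3$).

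It then remains to observe that, for the nonzero effective divisor $D_k$, Serre duality gives $h^2(\mathcal O_X(D_k-H))=h^0(\mathcal O_X(-D_k))=0$, while \eqref{RR} gives $\chi(\mathcal O_X(D_k-H))=\tfrac{(D_k-H)D_k}{2}+1=p_a(D_k)\ge 1$; hence $h^0(\mathcal O_X(D_k-H))\ge 1$, so $D_k-H$ is effective, and therefore $D-H=(D_k-H)+L_1+\cdots+L_k$ is effective, i.e. $h^0(\mathcal O_X(D-H))\ne 0$ and $\mathcal O_X(D)$ is not initialized. The main obstacle is this converse direction: making the peeling procedure terminate at a genuinely nonzero effective divisor and isolating where the hypothesis $D.H\ge 3$ is used (precisely to exclude $D_k=0$); the forward direction and the Riemann--Roch computations are routine. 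One could alternatively run the converse directly from the explicit nefness criterion of Theorem \ref{N}, but the peeling argument seems cleaner and more robust.
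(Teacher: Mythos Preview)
Your proof is correct, but both directions take a genuinely different route from the paper.

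For ``nef $\Rightarrow$ initialized'', the paper invokes Kawamata--Viehweg vanishing: since $D$ is nef and $D^2=D.H-2\ge 1$ makes it big, one gets $h^1(\mathcal O_X(D-H))=h^1(\mathcal O_X(K_X+D))=0$, and then Riemann--Roch (with $\chi(\mathcal O_X(D-H))=0$) gives $h^0(\mathcal O_X(D-H))=0$. Your direct intersection-theoretic argument (if $E\in|D-H|$ then $D.E=D^2-D.H=-2<0$, contradicting nefness) is more elementary and avoids the vanishing theorem entirely.

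For ``initialized $\Rightarrow$ nef'' the paper is considerably shorter than your peeling argument: initializedness together with $D^2=D.H-2$ gives $h^1(\mathcal O_X(D-H))=0$ (since $\chi(\mathcal O_X(D-H))=0$ and $h^0=h^2=0$), while $D.H\ge 3$ gives $h^2(\mathcal O_X(D-2H))=h^0(\mathcal O_X(H-D))=0$; thus $\mathcal O_X(D)$ is $0$-regular in the sense of Castelnuovo--Mumford, hence globally generated, hence nef. Your Zariski-style reduction by stripping $(-1)$-curves is correct and self-contained, but is substantially more work than the two-line regularity argument. It is also worth noting that in the paper's approach the role of the hypothesis $D.H\ge 3$ is transparent (it is exactly what forces $h^0(\mathcal O_X(H-D))=0$), whereas in yours it surfaces only at the termination step to exclude $D_k=0$.
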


\begin{proof}
Note that the conditions $D^2=D.H-2$ and $D.H \geq 3$ together will give us $h^1(\mathcal O_X(D-H))=0$ and $h^2(\mathcal O_X(D-2H))=0$. This means $\mathcal O_X(D)$ is $0$-regular and therefore, by Castelnuovo-Mumford regularity i.e. Theorem \ref{CM}, $(i)$, globally generated and hence nef.\

Conversely, assume that $D$ is nef. Since $D^2>0$ and hence $D$ is big, Kawamata-Viehweg vanishing forces $h^1(\mathcal O_X(D-H)) =0$, whence by \ref{RR},  we obtain $h^0(\mathcal O_X(D-H)) =0$.
	
\end{proof}

\subsection{\textbf{Bounding the degree of the divisors for $\ell \geq 2$ case}}\label{DB}\

In this subsection, we  record a necessary degree bound result for an initialized $\ell (\geq 2)$-away ACM line bundle on $X$ which is crucial for establishing the classification theorem (i.e. Theorem \ref{T35}) of $2$-away ACM line bundles on $X$. We also show that for $\ell \geq 3, (>3)$ respectively this degree bound result can be improved. \

Before proceeding for the degree bound result, we first  prove the following proposition, which will help us to improve the degree bound result obtained in $\ell \geq 2$ case to $\ell \geq 3$ case.\

\begin{proposition}\label{3l}
	Let $\ell \geq 2$. Let $D$ be a non-zero effective divisor on $X$ such that $D$ satisfies the following conditions:\
	
	$(i)$ $(D^2, D.H) =(3\ell-2, 3\ell)$,\
	
	$(ii)$ $D$ is nef,\
	
	$(iii)$ $|(2\ell-1)H-D| = \emptyset$.\
	
	Then $\mathcal O_X(D)$ is $(2\ell-2)$-away ACM
	
\end{proposition}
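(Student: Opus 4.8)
The plan is to compute the Euler characteristic $\chi(\mathcal O_X(D+tH))$ as an explicit function of $t\in\mathbb Z$, read off from it the (finitely many) twists where $h^1$ is \emph{forced} to be nonzero, and then kill $h^1$ for all remaining twists by propagating two base-case vanishings via Lemma \ref{L1}. Since $\dim X=2$, only $i=1$ is relevant, so ``$(2\ell-2)$-away ACM'' means exactly $2\ell-2$ integers $t$ satisfy $h^1(\mathcal O_X(D+tH))\neq 0$. Using Riemann--Roch \ref{RR} with $K_X=-H$, $H^2=3$, $D^2=3\ell-2$, $D.H=3\ell$, one gets the clean factorization $\chi(\mathcal O_X(D+tH))=\tfrac{3(t+1)(t+2\ell)}{2}$. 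This is negative precisely for $-2\ell<t<-1$, i.e. for the $2\ell-2$ integers $t\in\{-2\ell+1,\dots,-2\}$, and for each such $t$ the inequalities $h^0,h^2\ge 0$ force $h^1(\mathcal O_X(D+tH))\ge -\chi(\mathcal O_X(D+tH))>0$. So these $2\ell-2$ twists lie in $S(\mathcal O_X(D))$; it remains to prove $h^1(\mathcal O_X(D+tH))=0$ for $t\ge -1$ and for $t\le -2\ell$ (note $\chi$ vanishes exactly at the two endpoints $t=-1$ and $t=-2\ell$, which is consistent with the claimed vanishing).

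For $t\ge -1$: the hypotheses $D^2=D.H-2$, $D.H=3\ell\ge 3$ and $D$ nef put us in the setting of Lemma \ref{nef}, whence $\mathcal O_X(D)$ is initialized, i.e. $h^0(\mathcal O_X(D-H))=0$; equivalently $h^0(\mathcal O_X(D-H))\le\chi(\mathcal O_X(D-H))=0$. A direct check shows that the right-hand bound appearing in Lemma \ref{L1}$(i)$ is exactly $\chi(\mathcal O_X(D+sH))$, so Lemma \ref{L1}$(i)$ applied with $t=-1$ gives $h^0(\mathcal O_X(D+tH))\le\chi(\mathcal O_X(D+tH))$ for all $t\ge -1$. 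Since $h^2(\mathcal O_X(D+tH))=h^0(\mathcal O_X(-(t+1)H-D))=0$ for $t\ge -1$ (the divisor $-(t+1)H-D$ has negative $H$-degree, hence is not effective), we obtain $h^1=h^0+h^2-\chi\le 0$, hence $h^1(\mathcal O_X(D+tH))=0$.

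For $t\le -2\ell$: Serre duality gives $h^1(\mathcal O_X(D+tH))=h^1(\mathcal O_X(rH-D))$ with $r=-(t+1)\ge 2\ell-1$. Hypothesis $(iii)$ says $h^0(\mathcal O_X((2\ell-1)H-D))=0$, and one checks $\chi(\mathcal O_X((2\ell-1)H-D))=0$ while the right-hand bound in Lemma \ref{L1}$(ii)$ equals $\chi(\mathcal O_X((s-1)H-D))$. Thus Lemma \ref{L1}$(ii)$, applied with $t=2\ell$ (legitimate since $2\ell>\ell=\tfrac{D.H}{3}$), yields $h^0(\mathcal O_X(rH-D))\le\chi(\mathcal O_X(rH-D))$ for all $r\ge 2\ell-1$; combined with $h^2(\mathcal O_X(rH-D))=h^0(\mathcal O_X(D-(r+1)H))=0$ (again negative degree), this forces $h^1(\mathcal O_X(rH-D))=0$, i.e. $h^1(\mathcal O_X(D+tH))=0$ for all $t\le -2\ell$. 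Assembling the three ranges, $S(\mathcal O_X(D))=\{(1,t)\ :\ -2\ell+1\le t\le -2\}$, which has exactly $2\ell-2$ elements, so $\mathcal O_X(D)$ is $(2\ell-2)$-away ACM. (Alternatively, the vanishing for $t\ge -1$ could be obtained from Lemma \ref{general}$(2)$ applied to $\mathcal L=\mathcal O_X(D-H)$, using Kawamata--Viehweg to get $h^1(\mathcal O_X(D-H))=0$ from bigness and nefness of $D$.)

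The proof has no genuinely hard step: the whole content is the factorization of $\chi$ and the observation that the bounds supplied by Lemma \ref{L1} are \emph{exactly} the Euler characteristics in question, so that hypothesis $(ii)$ (nefness, giving the base case at $t=-1$) and hypothesis $(iii)$ (giving the base case at $r=2\ell-1$) are precisely what anchor the two inductions. The only thing demanding care is bookkeeping: one must verify that the forced-nonvanishing range $\{-2\ell+1,\dots,-2\}$ and the two vanishing ranges $\{t\ge -1\}$, $\{t\le -2\ell\}$ tile $\mathbb Z$ with neither overlap nor gap, which is exactly the statement that the two base-case twists sit at the boundary values $t=-1$ and $t=-2\ell$ where $\chi$ changes sign.
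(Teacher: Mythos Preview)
Your proof is correct and follows essentially the same strategy as the paper's: force nonvanishing on the middle range via the sign of $\chi$, and kill $h^1$ on the two outer ranges using the base cases supplied by hypotheses $(ii)$ and $(iii)$ together with Lemma~\ref{L1}. The only minor difference is in the $t\ge -1$ direction: the paper uses Kawamata--Viehweg on the nef and big $D$ to get $h^1(\mathcal O_X(D-H))=0$ and then invokes $0$-regularity (Theorem~\ref{CM}), whereas you use Lemma~\ref{nef} to get $h^0(\mathcal O_X(D-H))=0$ and then propagate via Lemma~\ref{L1}$(i)$---an approach you yourself note as equivalent in your parenthetical remark. Your explicit factorization $\chi(\mathcal O_X(D+tH))=\tfrac{3(t+1)(t+2\ell)}{2}$ is a nice touch that makes the range of forced nonvanishing transparent.
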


\begin{proof}
	Since $D$ is nef and big, Kawamata-Viehweg vanishing  forces $h^1(\mathcal O_X(D-H)) =0$. Therefore, $\mathcal O_X(D)$ is $0$-regular and hence by Castelnuovo-Mumford regularity i.e. Theorem \ref{CM},$(iii)$, $h^1(\mathcal O_X(D+tH))=0$ for all $t \geq -1$. Next, let $2 \leq t \leq 2\ell-1$. Since Lemma \ref{nef} forces $\mathcal O_X(D)$ to be initialized, \ref{RR} gives us  $h^2(\mathcal O_X(D-tH))-h^1(\mathcal O_X(D-tH))=\frac{6l-6\ell t+3t^2-3t}{2}<0$.  This means that $h^1(\mathcal O_X(D-tH)) \neq 0$ for all $2 \leq t \leq 2\ell-1$. From assumption $(iii)$ and \ref{RR}, we have $h^1(\mathcal O_X(D-2\ell H))=0$. This coupled with Lemma \ref{L1}, $(ii)$ gives us $h^1(\mathcal O_X(D-tH))=0$ for all $t \geq 2\ell$, whence it follows that $\mathcal O_X(D)$ is $(2\ell-2)$-away ACM.\
\end{proof}


With Proposition \ref{3l} at hand, we are now ready to prove the degree bound result.\

\begin{proposition}\label{degreebound}
	Let $D$ be a non-zero effective divisor on $X$ such that $\mathcal O_X(D)$ is initialized and $\ell$-away ACM with repect to  $H:= \mathcal O_{\mathbb P^3}(1)|_X$. Then the following holds.\
	
	$(i)$ If $\ell \geq 2$, then $D.H \leq 3\ell$.\
	
	$(ii)$ If $\ell \geq 3$, then $D.H \neq 3\ell$ i.e. $D.H \leq 3\ell-1$.\
	
	$(iii)$ If $\ell > \frac{2k}{3}+3$ for some positive integer $k$, then  $D.H \neq 3\ell-k$.\
	
	
\end{proposition}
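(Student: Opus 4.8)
The strategy is to combine the Riemann–Roch computation (equation \eqref{RR}) with the vanishing results of Lemma \ref{L1}, Proposition \ref{P2}, and Proposition \ref{3l}. First, for part $(i)$, suppose $D.H \geq 3\ell+1$. Since $\mathcal{O}_X(D)$ is initialized, we have $h^0(\mathcal{O}_X(D-H))=0$, and we want to count lower bounds on how many twists $D-tH$ have nonvanishing $h^1$. Using \eqref{RR} we compute $\chi(\mathcal{O}_X(D-tH)) = \frac{(D-tH)(D-tH-K_X)}{2}+1 = \frac{D^2 - 2tD.H + 3t^2}{2} + \frac{D.H - 3t}{2}\cdot(\text{something}) + 1$; more precisely, using $K_X = -H$ and $H^2=3$, one gets $\chi(\mathcal{O}_X(D-tH)) = \frac{D(D+H)}{2} + 1 - t D.H + t(D.H) \cdots$ — the point is that for a range of $t$ strictly between $\frac{D.H}{3}$-ish and something controlled by $D^2$, the Euler characteristic is negative while $h^0 = h^2 = 0$ (the former by initialization pushed up via Lemma \ref{L1}$(ii)$ once we know $h^0(\mathcal{O}_X((t-1)H - D))$ is small, the latter by Serre duality $h^2(\mathcal{O}_X(D-tH)) = h^0(\mathcal{O}_X((t-1)H - D))$ vanishing for $t$ not too large). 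This forces $h^1(\mathcal{O}_X(D-tH)) \neq 0$ for more than $\ell$ consecutive values of $t$, contradicting the $\ell$-away hypothesis. The counting has to be done carefully: I would show that for $D.H \geq 3\ell+1$, the negativity of $\chi$ persists over at least $\ell+1$ integer values of $t$ in the relevant window, which is where the bound $D.H \leq 3\ell$ is exactly tight.

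For part $(ii)$, suppose $\ell \geq 3$ and $D.H = 3\ell$. By part $(i)$ this is the extremal case, and the analysis there will show that $h^1(\mathcal{O}_X(D-tH)) \neq 0$ for exactly $\ell$ values of $t$ forces tight constraints: in particular $D^2$ must equal its minimal admissible value, namely $D^2 = 3\ell - 2$ (the value coming from $p_a$ considerations and the Hodge index bound $D^2 \leq \frac{(D.H)^2}{H^2} = 3\ell^2$, combined with the requirement that $\chi$ be negative on precisely $\ell$ twists). Then Lemma \ref{nef} applies (since $D^2 = D.H - 2$ and $D.H = 3\ell \geq 3$) to give that $D$ is nef. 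Now I would invoke Proposition \ref{3l}: if additionally $|(2\ell-1)H - D| = \emptyset$ then $\mathcal{O}_X(D)$ is $(2\ell - 2)$-away ACM, and $2\ell - 2 > \ell$ when $\ell \geq 3$ — but we assumed it is exactly $\ell$-away, a contradiction. The remaining subcase $|(2\ell-1)H - D| \neq \emptyset$ means $h^0(\mathcal{O}_X((2\ell-1)H - D)) \geq 1$; then $h^2(\mathcal{O}_X(D - 2\ell H)) \geq 1$ by Serre duality, so $\mathcal{O}_X(D)$ fails to be initialized in the dual direction or, more usefully, one checks via \eqref{RR} that this extra section forces $h^1$ to be nonzero on yet another twist, again overshooting $\ell$. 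Either way $D.H = 3\ell$ is impossible.

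For part $(iii)$, the idea is the same bootstrapping but starting one step lower. Suppose $\ell > \frac{2k}{3} + 3$ and $D.H = 3\ell - k$. The Riemann–Roch negativity argument of part $(i)$, applied with this value of $D.H$, still produces a block of consecutive twists $t$ on which $\chi(\mathcal{O}_X(D-tH)) < 0$; the length of this block is roughly $D.H/3 = \ell - k/3$ minus a correction coming from where $h^0$ and $h^2$ can be nonzero. The hypothesis $\ell > \frac{2k}{3} + 3$ is precisely what guarantees this block has length exceeding $\ell$, giving the contradiction. I would set up the general inequality: the number of integers $t$ with both $t > \frac{D.H}{3}$ forcing $h^0 = 0$ side-conditions and the $\chi < 0$ inequality holding, is at least some explicit linear function of $D.H$ and $D^2$; then plug in $D^2 \leq \frac{(D.H)^2}{3}$ (Hodge index) and $D.H = 3\ell - k$ and simplify to see the count exceeds $\ell$ exactly when $\ell > \frac{2k}{3}+3$.

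The main obstacle I anticipate is the bookkeeping in the counting argument: one must pin down precisely the interval of twists $t$ on which one can simultaneously guarantee $h^0(\mathcal{O}_X(D-tH)) = 0$ and $h^2(\mathcal{O}_X(D-tH)) = 0$ (using Lemma \ref{L1} and Serre duality to propagate the initialization hypothesis), and then show the sub-interval where $\chi < 0$ still has more than $\ell$ integer points. Getting the off-by-one constants right — so that the threshold comes out as exactly $D.H \leq 3\ell$, $D.H \leq 3\ell - 1$, and $\frac{2k}{3}+3$ — is the delicate part; the rest is a routine combination of Riemann–Roch, Serre duality, the Hodge index theorem, and the already-established Lemmas \ref{L1}, \ref{nef} and Proposition \ref{3l}.
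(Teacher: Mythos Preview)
Your overall shape is right, but there is a real gap: you never control $D^2$, and the Hodge index bound $D^2 \le (D.H)^2/3$ you invoke in part~(iii) is far too weak to force $\chi(\mathcal O_X(D-tH))<0$ on a long enough interval. The correct bound is $D^2 \le D.H-2$, with equality precisely when $h^1(\mathcal O_X(D-H))=0$; this falls out immediately from Riemann--Roch applied to $\mathcal O_X(D-H)$ together with $h^0(\mathcal O_X(D-H))=0$ (initialization) and $h^2(\mathcal O_X(D-H))=h^0(\mathcal O_X(-D))=0$ ($D$ effective nonzero). With Hodge index alone the quadratic $\chi(\mathcal O_X(D-tH))$ can stay nonnegative over the whole window you need, so the counting collapses.

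The paper packages the argument slightly differently. Instead of bounding $\chi$ directly, it proves a \emph{propagation step} via restriction to the elliptic curve $H$: for $t$ with $3t<D.H$, the sequence
\[
0\to\mathcal O_X((t-1)H-D)\to\mathcal O_X(tH-D)\to\mathcal O_H(tH-D)\to 0
\]
and Serre duality give $h^1(\mathcal O_X(D-tH))\neq 0 \Rightarrow h^1(\mathcal O_X(D-(t+1)H))\neq 0$. Combined with Theorem~\ref{Ton}(2), this forces $h^1(\mathcal O_X(D-H))=0$ (else there would already be $\ell+1$ nonvanishing twists), whence $D^2=D.H-2$ exactly, and Riemann--Roch at $t=\ell+2$ yields the contradiction for~(i). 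For~(ii), once the nonvanishing twists are pinned down as exactly $t=2,\dots,\ell+1$, Riemann--Roch at $(2\ell-1)H-D$ gives $|(2\ell-1)H-D|=\emptyset$ automatically, so your ``remaining subcase'' never occurs and Proposition~\ref{3l} finishes. For~(iii) the propagation covers $2\le t\le \ell-k/3$, and on the further range $\ell-k/3 < t < 2\ell-2k/3-1$ one plugs $D^2\le D.H-2$ (not Hodge index) into Riemann--Roch to get $\chi<0$; the hypothesis $\ell>2k/3+3$ is exactly what makes the total count exceed $\ell$.
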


\begin{proof}
	Let $D$ be a non-zero effective divisor on $X$ such that $\mathcal O_X(D)$ is initialized and $\ell$-away ACM with repect to  $H$.\
	
	$(i)$ On the contrary, assume that $D.H > 3\ell$. Let $1 \leq t \leq \ell$. We first show that if  $H^1(\mathcal O_X(D-tH)) \neq 0$, then $H^1(\mathcal O_X(D-(t+1)H)) \neq 0$. If  $H^1(\mathcal O_X(D-tH)) \neq 0$, then by Serre duality $H^1(\mathcal O_X((t-1)H-D)) \neq 0$. Then applying cohomology to the following S.E.S:
	\begin{align}
		0 \to \mathcal O_X((t-1)H-D) \to \mathcal O_X(tH-D) \to \mathcal O_H(tH-D) \to 0
	\end{align}
	we get $H^1(\mathcal O_X(tH-D)) \neq 0$ i.e. $H^1(\mathcal O_X(D-(t+1)H)) \neq 0$ (by Serre duality). Since $\mathcal O_X(D)$ is $\ell$-away ACM, Theorem \ref{Ton}, $(2)$, forces $h^1(\mathcal O_X(D-H)) = 0$ and $h^1(\mathcal O_X(D-2H)) \neq 0$. To be more precise, in this situation, we must have $H^1(\mathcal O_X(D-tH)) \neq 0$ for all $2 \leq t \leq \ell+1$ (and $H^1$ vanishes otherwise). Applying \ref{RR} to $\mathcal O_X(D-H)$, we get $D^2=D.H-2$. Then application of \ref{RR} to $\mathcal O_X(D-(\ell+2)H)$ gives us $3\ell^2+9\ell+6 \geq (2\ell+2)D.H\geq 6\ell^2+8\ell+2 \implies 3\ell^2-\ell-4 \leq 0$, a contradiction as $\ell \geq 2$. This means we must have $D.H \leq 3\ell$.\\
	
	$(ii)$ Let $\ell \geq 3, D.H=3\ell$. From the first part of the previous paragraph, letting $2 \leq t \leq \ell-1$, we get if  $H^1(\mathcal O_X(D-tH)) \neq 0$, then $H^1(\mathcal O_X(D-(t+1)H)) \neq 0$. Note that by taking cohomology to the following S.E.S:
	\begin{align}
		0 \to \mathcal O_X(D-(\ell+1)H) \to \mathcal O_X(D-\ell H) \to \mathcal O_H(D-\ell H) \to 0
	\end{align}
	we get $H^1(\mathcal O_X(D-(\ell+1)H)) =H^1(\mathcal O_X(D-\ell H)) \neq 0$. Also it can be deduced using  similar arguments as in the first part of case $(i)$ that if $h^1(\mathcal O_X(D-H)) \neq 0$, then $h^1(\mathcal O_X(D-2H)) \neq 0$. Since $\mathcal O_X(D)$ is $\ell$-away ACM, Theorem \ref{Ton}, $(2)$,  forces $H^1(\mathcal O_X(D-tH)) \neq 0$ for all $2 \leq t \leq \ell+1$ (and $H^1$ vanishes otherwise). Applying \ref{RR} to $\mathcal O_X(D-H)$, we get $D^2=3\ell-2$. Note that by Lemma \ref{nef}, $D$ is nef and applying \ref{RR} on $\mathcal O_X((2\ell-1)H-D)$, we get $h^0(\mathcal O_X((2\ell-1)H-D))=0$. Then by Proposition \ref{3l}, $\mathcal O_X(D)$ is $(2\ell-2)$-away ACM, a contradiction as $\ell \neq 2$. Therefore, when $\ell \geq 3$, we must have $D.H \leq 3\ell-1$.\\
	
	$(iii)$ On the contrary, assume that $D.H=3\ell-k$. Since $3\ell-k >3$, taking cohomology to the following S.E.S:
	\begin{align}
		0 \to \mathcal O_X(D-2H) \to \mathcal O_X(D-H) \to \mathcal O_H(D-H) \to 0
	\end{align}
	we have $h^1(\mathcal O_X(D-2H)) \neq 0$. If $2 \leq t <\frac{3\ell-k}{3}$, then applying cohomology to the following S.E.S:
	\begin{align}
		0 \to \mathcal O_X(D-(t+1)H) \to \mathcal O_X(D-tH) \to \mathcal O_H(D-tH) \to 0
	\end{align}
	we obtain that if $h^1(\mathcal O_X(D-tH)) \neq 0$, then $h^1(\mathcal O_X(D-(t+1)H)) \neq 0$. This means that $h^1(\mathcal O_X(D-tH)) \neq 0$ for all $2 \leq t \leq \ell-\frac{k}{3}$. If we can show that for all $\ell-\frac{k}{3} < t <2\ell-\frac{2k}{3}-1 $, $h^1(\mathcal O_X(D-tH)) \neq 0$, then it means that the intermediate cohomology of $\mathcal O_X(D)$ fails to vanish at atleast $2\ell-\frac{2k}{3}-3$ twists, a contradiction (as $2\ell-\frac{2k}{3}-3 >\ell $). Therefore, our work boils down to show that $h^1(\mathcal O_X(D-tH)) \neq 0$ for all $\ell-\frac{k}{3} < t <2\ell-\frac{2k}{3}-1 $. If $h^1(\mathcal O_X(D-H))=0$, then we have $D^2=3\ell-k-2$. By \ref{RR}, we have $h^0(\mathcal O_X(t-1)H-D))-h^1(\mathcal O_X(D-tH))= \frac{6\ell-2k-6t\ell+2tk+3t^2-3t}{2}$. Since $t > \ell-\frac{k}{3}$, we get $\frac{6\ell-2k-6t\ell+2tk+3t^2-3t}{2} < \frac{t(3t+2k-6\ell+3)}{2}$ which is negative as $t <2\ell-\frac{2k}{3}-1 $. Now we are done by noting that, if $h^1(\mathcal O_X(D-H)) \neq 0$, then we have $D^2 \leq 3\ell-k-4$ and hence $h^0(\mathcal O_X(t-1)H-D))-h^1(\mathcal O_X(D-tH)) \leq \frac{6\ell-2k-6t\ell+2tk+3t^2-3t}{2}-1 $, which we have already shown to be negative if $\ell-\frac{k}{3} < t <2\ell-\frac{2k}{3}-1 $. \\
\end{proof}

\subsection{Motivating examples}\label{12}\
	
	To motivate the studies carried out in the upcoming sections, let us end this section by presenting a study on $\ell$-away ACM line bundles of low degree on $X$. To be more precise, we determine the values of $\ell$ for which a non-zero effective divisor $D$ on $X$ with $D.H=1,2$ becomes $\ell$-away ACM.\\
	
	$(1)$ Let $D$ be a non-zero effective divisor on $X$ with $D.H=1$. Since $D$ is a line on $X$ and hence $D^2=-1$, by the Theorem \ref{Ton}, $(3)$, $\mathcal O_X(D)$ is ACM.\\
	
	$(2)$ Let $D$ be a non-zero effective divisor on $X$ with $D.H=2$. Then there are four possibilities:
	\begin{itemize}
		\item $D$ is a reduced and irreducible divisor on $X$ with $D.H=2$. It is easy to see that $D^2=0$. Therefore, by the Theorem \ref{Ton}, $(3)$, $\mathcal O_X(D)$ is ACM.\
		
		\item $D=D_1+D_2$, where $D_i$'s are lines on $X$ for $i=1,2$ with $D_1.D_2=1$. Since $D_i^2=-1$ for $i=1,2$, we have $D^2=0$. Therefore, by the Theorem \ref{Ton}, $(3)$, $\mathcal O_X(D)$ is ACM.\
		
		\item $D=D_1+D_2$, where $D_i$'s are lines on $X$ for $i=1,2$ with $D_1.D_2=0$. In this case, we have, $(D^2, D.H)=(-2,2)$. Since $(D-H).H <0$, $\mathcal O_X(D)$ is initialized. Applying  \ref{RR} to $\mathcal O_X(D-H)$, we see that $h^1(\mathcal O_X(D-H)) = 1$. We will show that $h^1(\mathcal O_X(D+tH)) =0$ for all $t \in \mathbb Z \setminus \{-1\}$.  We divide this into two separate cases and use induction to establish both the cases:\
		
		First, we  show that $H^1(\mathcal O_X(D+tH)) = 0$ for $t\geq 0$. Note that by \ref{RR}, it is enough to show $h^0(\mathcal O_X(D+tH))\leq  \frac{3t^2+7t+2}{2} $ for $t\geq 0$. By Lemma \ref{L1}, $(i)$ if we can show this for $t=0$, then by induction we are done. For $t=0$, this is indeed true by Proposition \ref{P2}, $(i)$ or alternatively by the following argument : observe that $h^1(\mathcal O_X(D)) =h^1(\mathcal O_X(-D-H))=0$ and thus by taking cohomology to the  S.E.S: $0 \to \mathcal O_X(-D-H) \to \mathcal O_X(-H) \to \mathcal O_D(-H) \to 0$ and using Kodaira vanishing.\
		
		Next, we show that $H^1(\mathcal O_X(D-tH)) = 0$ for $t \geq 2$. By \ref{RR}, it is enough to show $h^0(\mathcal O_X((t-1)H-D))\leq  \frac{3t^2-7t+2}{2} $ for $t \geq 2$. By  Lemma \ref{L1}, $(ii)$, if we can show this for $t=2$, then by induction we are through. Note that for $t=2$, the above is true because otherwise, if we assume $|H-D| \neq \emptyset$, then there exists a line $\Gamma \in |H-D|$ and hence  $\Gamma^2=-1$, a contradiction as $\Gamma^2=(H-D)^2=-3$. This shows that $\mathcal O_X(D)$ is $1$-away ACM.\
		
		\item $D=2D_1$, where $D_1$ is a line on $X$.  Applying \ref{RR} to $\mathcal O_X(2D_1), \mathcal O_X(2D_1-H), \mathcal O_X(2D_1-2H)$, we get $h^1(\mathcal O_X(2D_1)) \neq 0, h^1(\mathcal O_X(2D_1-H)) \neq 0, h^1(\mathcal O_X(2D_1-2H)) \neq 0$.\
		
		First, we show that $h^1(\mathcal O_X(2D_1+tH))=0$ for all $t\geq 1$. Note that by \ref{RR}, it is enough to show $h^0(\mathcal O_X(2D_1+tH)) \leq  \frac{3t^2+7t}{2} $ for all $t \geq 1$. By Lemma \ref{L1}, $(i)$, if we can show this for $t=1$, then by induction we are done. For $t=0$, $h^0(\mathcal O_X(2D_1+H))=5$ can be achieved by repeatedly tensorizing the structure exact sequence for the line $D_1$ with appropriate line bundles and then taking cohomology.\
		
		Next, we  show that $h^1(\mathcal O_X(D-tH))=0$ for all $t \geq 3$. By \ref{RR}, it is enough to show $h^0(\mathcal O_X((t-1)H-D))\leq  \frac{3t^2-7t}{2} $ for $t \geq 3$. By  Lemma \ref{L1}, $(ii)$, if we can show this for $t=3$, then by induction we are through. Note that for $t=3$, $h^0(\mathcal O_X(2H-2D_1)) \geq 3$ is there by \ref{RR}. Then the fact $h^0(\mathcal O_X(2H-2D_1)) = 3$ can be achieved by applying Proposition \ref{P2}, $(iv)$ to $\Gamma \in |2H-2D_1| \neq \emptyset$. This shows that $\mathcal O_X(D)$ is $3$-away ACM.\\
		
		\end{itemize}
	
	Note that in the above discussion we fixed the degree of the divisor $D$ on $X$ and determined the integer $\ell$ for which $\mathcal O_X(D)$ is $\ell$-away ACM. In the upcoming sections our approach of the study will be on the other direction i.e. we first fix the integer $\ell \geq 1$ for which $\mathcal O_X(D)$ is $\ell$-away ACM and then study the possibilities of the degree of $D$ in terms of $\ell$.\

	\section{Classification of $1$-away and $2$-away ACM line bundles}\label{cl12}
	
	Let $X$ be a nonsingular cubic hypersurface in $\mathbb P^3$ as before. In this section, our main aim is to classify $\ell$-away ACM line bundles on $X$ when $\ell=1,2$. Towards this, we organize this section as follows: in the first subsection i.e. in $\S \ref{cl1}$, we classify the $\ell=1$ case (Theorem \ref{T31}) with an explicit list of divisors (Proposition \ref{P32}). In the second subsection i.e. in $\S \ref{cl2}$, as an application of our degree bound result for $\ell (\geq 2)$-away ACM line bundles (Proposition \ref{degreebound}), we obtain a complete characterization of $2$-away ACM line bundles (Theorem \ref{T35}) again with an explicit list of divisors (Proposition \ref{P37}). We then apply these characterizations to obtain the classification of weakly Ulrich line bundles on $X$ that is not Ulrich (Corollary \ref{notUlrich}). \
	
	
	
	
	
	\subsection{Classification of $1$-away ACM line bundles}\label{cl1}\
	
	In this subsection, we prove the first main theorem of this paper i.e. a complete classification of initialized and $1$-away ACM line bundles on $X$.\
	
	\begin{theorem}\label{T31}
		Let $D$ be a non-zero effective divisor on a nonsingular cubic hypersurface $X \subset \mathbb P^3$. Then the following conditions are equivalent: 
		
		$(a)$ $\mathcal O_X(D)$ is initialized  and $1$-away ACM with respect to $H:= \mathcal O_{\mathbb P^3}(1)|_X$.\
		
		$(b)$ either $(D^2, D.H)=(-2,2)$ or $(D^2, D.H)=(2,4)$.

	\end{theorem}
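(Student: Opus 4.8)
The plan is to prove both implications by combining Riemann--Roch \eqref{RR}, Serre duality, the regularity statements of Theorem \ref{CM}, the classification of ACM line bundles (Theorem \ref{Ton}), and the technical inputs already available: Lemma \ref{L1}, Lemma \ref{nef} and Proposition \ref{P2}. Indeed the $(-2,2)$ half of the sufficiency direction is exactly the computation for pairs of disjoint lines carried out in $\S\ref{12}$, which I would simply cite.

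For $(b)\Rightarrow(a)$, suppose first $(D^2,D.H)=(-2,2)$. Here $(D-H).H<0$ makes $\mathcal O_X(D)$ initialized, Riemann--Roch on $\mathcal O_X(D-H)$ (with $h^0=h^2=0$) gives $h^1(\mathcal O_X(D-H))=1$, and the vanishing of $h^1(\mathcal O_X(D+tH))$ for every other $t$ follows, for $t\ge 0$, from Proposition \ref{P2}(i) at $t=0$ together with Lemma \ref{L1}(i), and for $t\le -2$ from Lemma \ref{L1}(ii) once one notes $|H-D|=\emptyset$ (no effective divisor has intersection numbers $(-3,1)$). Now suppose $(D^2,D.H)=(2,4)$. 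Then $D^2=D.H-2$, and $(D-H)^2=-3$ forces $|D-H|=\emptyset$, so $\mathcal O_X(D)$ is initialized; moreover $h^0(\mathcal O_X(D-H))=0\le\frac{D^2-D.H+2}{2}=0$, so Lemma \ref{L1}(i) applied at $t=-1$ gives $h^1(\mathcal O_X(D+tH))=0$ for all $t\ge -1$. Riemann--Roch on $\mathcal O_X(D-2H)$ gives $\chi=3-D.H=-1$ with $h^0=h^2=0$, hence $h^1(\mathcal O_X(D-2H))\ne 0$; and Riemann--Roch on $\mathcal O_X(D-3H)$ gives $\chi=1$, which with $h^0(\mathcal O_X(D-3H))=0$ forces $2H-D$ to be effective, so Proposition \ref{P2}(i) yields $h^0(\mathcal O_X(2H-D))=1$ and thus $h^1(\mathcal O_X(D-3H))=0$; Lemma \ref{L1}(ii) then propagates this to all $t\le -3$. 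In both cases $S(\mathcal O_X(D))$ is the single pair $(1,-1)$, respectively $(1,-2)$.

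For $(a)\Rightarrow(b)$, let $t_0$ be the unique twist with $h^1(\mathcal O_X(D+t_0H))\ne 0$. Since $h^2(\mathcal O_X(D+tH))=h^0(\mathcal O_X(-D-(t+1)H))=0$ for every $t\ge -1$, Theorem \ref{CM}(iii) shows that $\{s\ge 0:h^1(\mathcal O_X(D+sH))\ne 0\}$ is an initial segment of $\mathbb Z_{\ge 0}$; being at most a singleton it is $\{0\}$ or $\emptyset$, and $\{0\}$ is excluded since then $h^1(\mathcal O_X(D-H))=h^1(\mathcal O_X(D-2H))=0$ and Theorem \ref{Ton}(2) would make $\mathcal O_X(D)$ ACM. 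Hence $t_0\le -1$. If $t_0=-1$, then $h^1(\mathcal O_X(D-2H))=0$; Riemann--Roch on $\mathcal O_X(D-H)$ (with $h^0=h^2=0$, $h^1\ge 1$) gives $D^2\le D.H-4$, while Riemann--Roch on $\mathcal O_X(D-2H)$ (with $h^1=0$, $h^0,h^2\ge 0$) gives $D^2\ge 3D.H-8$; together these force $D.H\le 2$, and since the only effective divisor of degree $1$ is a line (for which $D^2=-1>D.H-4$), we are left with $D.H=2$, hence $D^2=-2$. If $t_0\le -2$, then $h^1(\mathcal O_X(D-H))=0$, so Riemann--Roch gives $D^2=D.H-2$; for $D.H\le 3$ Theorem \ref{Ton}(3) would make $\mathcal O_X(D)$ ACM, so $D.H\ge 4$. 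Then Riemann--Roch on $\mathcal O_X(D-2H)$ gives $h^1(\mathcal O_X(D-2H))=h^0(\mathcal O_X(D-2H))+D.H-3>0$, so in fact $t_0=-2$, and being $1$-away forces $h^1(\mathcal O_X(D-3H))=0$; Riemann--Roch on $\mathcal O_X(D-3H)$ then reads $9-2D.H=h^0(\mathcal O_X(D-3H))+h^0(\mathcal O_X(2H-D))\ge 0$, where the first term vanishes once $D.H\ge 9$ is ruled out by $\chi\ge 0$ (so that $D-3H$ has negative degree), giving $D.H\le 4$. Hence $D.H=4$ and $D^2=2$.

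The step I expect to be most delicate is the bookkeeping around $t_0$: proving via regularity that no nonnegative twist can be the bad one, keeping the Serre-duality identifications $h^1(\mathcal O_X(D-sH))=h^1(\mathcal O_X((s-1)H-D))$ straight, and making the Riemann--Roch inequalities in the $t_0\le -2$ branch sharp enough to eliminate $D.H=5,6,7,8$ at once. Everything else---initializedness, the arithmetic-genus reductions packaged in Proposition \ref{P2}, and the small-degree checks via Theorem \ref{Ton}---should be routine.
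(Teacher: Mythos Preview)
Your proposal is correct and follows essentially the same approach as the paper: both directions rest on Riemann--Roch \eqref{RR}, Theorem \ref{Ton}, Lemma \ref{L1}, and Proposition \ref{P2}, and the $(-2,2)$ sufficiency is precisely the computation of \S\ref{12}. The only organizational difference is in $(a)\Rightarrow(b)$: the paper first bounds $h^0(\mathcal O_X(H-D))\le 1$ and then invokes Theorem \ref{Ton}(2) to pin the bad twist to $-1$ or $-2$, whereas you use Castelnuovo--Mumford regularity to exclude $t_0\ge 0$ and then separate $t_0=-1$ from $t_0\le -2$ (recovering $t_0=-2$ in the latter); the resulting Riemann--Roch bookkeeping is identical. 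One cosmetic remark: in the $t_0\le -2$ branch your parenthetical about ``$D.H\ge 9$ is ruled out'' is superfluous, since $\chi(\mathcal O_X(D-3H))=h^0+h^2\ge 0$ already gives $9-2D.H\ge 0$ without needing to know that $h^0(\mathcal O_X(D-3H))$ vanishes.
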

	
	\begin{proof}
	
	\underline{\textbf{(a) $\implies$ (b)}}

	Let $D$ be a non-zero effective divisor on $X$. Assume $(a)$. Tensorizing the S.E.S \ref{ss} (with $\Gamma_1$ replaced by $H$) with $\mathcal O_X(H-D)$ and taking cohomology, one obtains $h^0(\mathcal O_X(H-D)) \leq h^0(\mathcal O_H(H-D))$. If $D.H >3$, then $h^0(\mathcal O_X(H-D))=0$. If $D.H=3$, then $h^0(\mathcal O_X(H-D)) =0$ (otherwise, if $|H-D| \neq \emptyset$, then $\mathcal O_X(D) \cong \mathcal O_X(H)$, a contradiction as $\mathcal O_X(D)$ is initialized). If $D.H=2$, then by Riemann-Roch theorem for line bundles on curves, one obtains $h^0(\mathcal O_H(H-D))=3-D.H =1$ and thus $h^0(\mathcal O_X(H-D)) \leq 1$. Note that $D.H \neq 1$ by Theorem \ref{Ton}, $(3)$. Therefore, we have established that in any case, $h^0(\mathcal O_X(H-D)) \leq 1$. We separately analyse both the situations  $h^0(\mathcal O_X(H-D)) =1$ and $h^0(\mathcal O_X(H-D))=0$.\\

	
	\pagebreak
	
	\underline{\textbf{Case $1$: $h^0(\mathcal O_X(H-D)) = 1$}}
	
	Observe that from the discussion in the previous paragraph it is clear that the only possibility in this situation is $D.H=2$. Then from the discussion on the $\ell$-away ACM line bundles of low degree on $X$ in subsection $\S \ref{12}$, it is clear that $D=D_1+D_2$ such that $D_i$'s are lines on $X$ with $D_1.D_2=0$. This gives us $(D^2,D.H)=(-2,2)$. Let $\Gamma \in |H-D| \neq \emptyset$. Since $\Gamma.H=1$, we have $\Gamma^2=-1$, a contradiction as in this case $(H-D)^2=-3$. Therefore, this case i.e. $h^0(\mathcal O_X(H-D)) = 1$ can't occur.\\
	
	\underline{\textbf{Case $2$: $h^0(\mathcal O_X(H-D)) = 0$}}
	
    Note that by Theorem \ref{Ton}, $(2)$, we have only two possibilities as follows:
    \begin{itemize}
    	\item either $h^1(\mathcal O_X(D-2H))\neq 0$ and $h^1(\mathcal O_X(D+tH))=0$ for $t \in \mathbb Z \setminus \{-2\}$.\
    	
    	\item or $h^1(\mathcal O_X(D-H))\neq 0$ and $h^1(\mathcal O_X(D+tH))=0$ for $t \in \mathbb Z \setminus \{-1\}$.\
    \end{itemize}
	In what follows next, we analyse each possibilities separately. Our strategy of the analysis is simple: if $h^1(\mathcal O_X(D+kH)) \neq 0$ for some $k \in \mathbb Z$, then we apply Riemann-Roch theorem (i.e. \ref{RR}) on the line bundles $\mathcal O_X(D+(k-1)H), \mathcal O_X(D+kH), \mathcal O_X(D+(k+1)H)$ and analyse the intersection theoretic data.\\
	
	Let us consider the first possibility i.e. $h^1(\mathcal O_X(D-2H))\neq 0$ and $h^1(\mathcal O_X(D+tH))=0$ for $t \in \mathbb Z \setminus \{-2\}$. Applying \ref{RR} on $\mathcal O_X(D-3H), \mathcal O_X(D-2H)$ and $\mathcal O_X(D-H)$, we get the following:
	
	\begin{itemize}
		\item $h^0(\mathcal O_X(2H-D))=\frac{D^2-5D.H+18}{2}+1$.
		
		\item $-h^1(\mathcal O_X(D-2H))= \frac{D^2-3D.H+6}{2}+1$.
		
		\item $D^2 =D.H-2$.
	\end{itemize}
	
	Combining the first and the third bullet point, one has $D.H \leq 4$. Combining the second and the third bullet point, one has $D.H \geq 4$. This forces $(D^2, D.H)=(2,4)$.\\
	

	Let us consider the second possibility i.e. $h^1(\mathcal O_X(D-H))\neq 0$ and $h^1(\mathcal O_X(D+tH))=0$ for $t \in \mathbb Z \setminus \{-1\}$. Applying \ref{RR}, on $\mathcal O_X(D-2H), \mathcal O_X(D-H)$ and $\mathcal O_X(D)$, we get the following :
	
	\begin{itemize}
		\item $D^2=3D.H-8$.\ 
		
		\item $D^2 <D.H-2$.
		
		\item $D^2+D.H \geq 0$.
	\end{itemize}

Combining the first and the third bullet point, one has $D.H \geq 2$. Combining the first and the second bullet point, one has $D.H \leq 2$. This forces $(D^2, D.H)=(-2,2)$.\\\

  


  
  	\underline{\textbf{(b) $\implies$ (a)}}
  	
  Let $D$ be a non-zero effective divisor on $X$. Assume $(b)$. Then either $(D^2, D.H)=(-2,2)$ or $(D^2, D.H)=(2,4)$. We separately show that in each case $\mathcal O_X(D)$ is $1$-away ACM (w.r.to $H$).\\
  
  \underline{\textbf{$(D^2, D.H)=(-2,2)$ }}
  
  This is already discussed in the last part of section $\S \ref{tech}$ (see the discussion in subsection $\S \ref{12}$, on $\ell$-away ACM line bundles of low degree  on $X$, the case $D.H=2$, third bullet point).\\
  
  
  
  
  \pagebreak
  
  \underline{\textbf{$(D^2, D.H)=(2,4)$ }}
  
  If we assume $|D-H| \neq \emptyset$, then there exists a line $\Gamma \in |D-H|$ and hence  $\Gamma^2=-1$, a contradiction as $\Gamma^2=(D-H)^2=-3$. Therefore, $h^0(\mathcal O_X(D-H))=0$ and $\mathcal O_X(D)$ is initialized.  Applying \ref{RR} to $\mathcal O_X(D-2H)$ and $\mathcal O_X(D-H)$ we get $h^1(\mathcal O_X(D-2H)) = 1$ and $h^1(\mathcal O_X(D-H)) = 0$ respectively. Therefore, it is sufficient to show $h^1(\mathcal O_X(D+tH)) =0$ for all $t \in \mathbb Z \setminus \{-2,-1\}$. Again as before, we divide this into two separate cases and use induction to establish both the cases:\
  
  
   First we  establish that $H^1(\mathcal O_X(D+tH)) = 0$ for $t\geq 0$. Note that by \ref{RR}, it is enough to show $h^0(\mathcal O_X(D+tH))\leq  \frac{3t^2+11t+8}{2} $ for $t\geq 0$. By Lemma \ref{L1}, $(i)$, if we can show this for $t=0$, then by induction we are done. For $t=0$, this can be seen by tensorizing S.E.S \ref{ss} (with $\Gamma_1$ replaced by $H$) by $\mathcal O_X(D)$ and taking cohomology.\
   
   
  
  Now we are left to prove that $H^1(\mathcal O_X(D-tH)) = 0$ for $t \geq 3$. By \ref{RR}, it is enough to show $h^0(\mathcal O_X((t-1)H-D))\leq  \frac{3t^2-11t+8}{2} $ for $t \geq 3$. By Lemma \ref{L1}, $(ii)$, if we can show this for $t=3$, then by induction we are through. Note that for $t=3$, the above is true by observing that  $h^0(\mathcal O_X(2H-D)) \geq 1$ and applying  Proposition \ref{P2}, $(i)$, to $\Gamma \in |2H-D|$.
   
  \end{proof}

\vspace{-62mm}
  
  Next, we proceed to give an explicit list of divisors on $X$ appearing in the above classification. Our strategy of obtaining the list is as follows: If $(D^2,D.H)=(-2,2)$, then letting $E=D+H$, one can see that $\mathcal O_X(E)$ is $0$-regular and therefore, by Theorem \ref{CM} globally generated and hence nef. Using Theorem \ref{cubic} and Theorem \ref{N}, one can then find out the list for $E$ by writing the possible tuples $(a,b_1,...,b_6)$ such that $a,b_i \geq 0$, $3a-\sum_{i=1}^{6} b_i=5, a^2-\sum_{i=1}^{6} b_i^2=5, a\geq b_i+b_j$ for $i \neq j$ and $2a \geq \sum_{i\neq j} b_i$ for each $j \in \{1,...6\}$. Once the list of $E$ is prepared, the list of $D$ can be obtained by using the expression of $H$ in terms of $l$ and the exceptional divisors.\\
  
  On the other hand, if $(D^2,D.H)=(2,4)$ it can be realized that $\mathcal O_X(D)$ itself is $0$-regular and therefore by theorem \ref{CM} globally generated and hence nef. Using thorem \ref{cubic} and theorem \ref{N}, one can then find out the list for $D$ by writing the possible tuples $(a,b_1,...,b_6)$ such that $a,b_i \geq 0$, $3a-\sum_{i=1}^{6} b_i=4, a^2-\sum_{i=1}^{6} b_i^2=2, a\geq b_i+b_j$ for $i \neq j$ and $2a \geq \sum_{i\neq j} b_i$ for each $j \in \{1,...6\}$.\\
  
  We summarize the above discussion in the following Proposition.\
  
  \vspace{-23mm}
  
\begin{proposition}\label{P32}
 Upto permutation of exceptional divisors on $X$, the initialized, $1$-away ACM divisors (w.r.to  $H:= \mathcal O_{\mathbb P^3}(1)|_X$) on $X$  are listed as follows:
 
 \vspace{-21mm}
 
  \begin{center}
  	\renewcommand{\arraystretch}{1.3}
  	\begin{tabular}{ |p{1.8cm}|p{5.5cm}|p{2cm}|  }
  		\hline
  		
  		\hline
  		$(D^2, D.H)$ & $D$  & $u(D)$ \\
  		\hline
  		& $e_5+e_6$ & $\binom{6}{2}$\\
  		& $l-e_1-e_2+e_6$  & $\binom{6}{3} \cdot \binom{3}{2}$\\
  		& $2l-e_1-e_2-e_3-e_4-e_5+e_6$  & $6$\\
  	$(-2,2)$	&$2l-2e_1-e_2-e_3$&  $\binom{6}{3} \cdot \binom{3}{2}$\\
  		   &$3l-2e_1-2e_2-e_3-e_4-e_5$ & $6 \cdot \binom{5}{2}$ \\
  		& $4l-2e_1-2e_2-2e_3-2e_4-e_5-e_6$ &  $\binom{6}{2}$\\
  		
  		\hline
  	\end{tabular}\
  \end{center}

\vspace{-13mm}

\pagebreak

\begin{center}
	\renewcommand{\arraystretch}{1.3}
	\begin{tabular}{ |p{1.8cm}|p{5.5cm}|p{2cm}|  }
		\hline
		
		\hline
		$(D^2, D.H)$ & $D$  & $u(D)$ \\
  		\hline
  		& $2l-e_1-e_2$ & $\binom{6}{2}$\\
  		& $3l-2e_1-e_2-e_3-e_4$   &  $4\binom{6}{4}$\\
  	$(2,4)$	&$4l-3e_1-e_2-e_3-e_4-e_5-e_6$ & $6$\\
  		  &$4l-2e_1-2e_2-2e_3-e_4-e_5$ & $6 \binom{5}{2}$ \\
  		& $5l-3e_1-2e_2-2e_3-2e_4-e_5-e_6$ & $4 \binom{6}{2}$ \\
  		& $6l-3e_1-3e_2-2e_3-2e_4-2e_5-2e_6$ & $\binom{6}{2}$   \\
  		
  		\hline
  	\end{tabular}\
  \end{center}
  
 where $u(D)$ is the number of divisors obtained permuting the exceptional divisors in the writing of $D$. 
\end{proposition}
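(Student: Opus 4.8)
The plan is to reduce both numerical possibilities of Theorem \ref{T31} to the enumeration of certain nef classes on $X$, and then to run that enumeration explicitly using the combinatorial nef criterion of Theorem \ref{N} together with the intersection formulas of Theorem \ref{cubic}.

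First I would dispose of the case $(D^2,D.H)=(2,4)$. Here $D^2=D.H-2$ and $D.H=4\ge 3$, so Lemma \ref{nef} applies, and since $\mathcal O_X(D)$ is initialized by Theorem \ref{T31} it follows that $D$ is nef. Conversely, if $D\sim al-\sum_{i=1}^6 b_ie_i$ is any nef class with $(D^2,D.H)=(2,4)$, then by \ref{RR} we get $\chi(\mathcal O_X(D))=\tfrac12(D^2+D.H)+1=4$, while $h^2(\mathcal O_X(D))=h^0(\mathcal O_X(K_X-D))=0$ because $(K_X-D).H=-7<0$ forbids $K_X-D$ from having a nonzero effective representative; hence $h^0(\mathcal O_X(D))\ge 4>0$ and $D$ is effective. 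So the effective classes with $(D^2,D.H)=(2,4)$ are precisely the nef ones, and by Theorems \ref{N} and \ref{cubic} they are the tuples $(a,b_1,\dots,b_6)$ with $b_i\ge 0$, $a\ge b_i+b_j$ for $i\ne j$, $2a\ge\sum_{i\ne j}b_i$ for each $j$, $3a-\sum b_i=4$ and $a^2-\sum b_i^2=2$.

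For the case $(D^2,D.H)=(-2,2)$ I would pass to $E:=D+H$, so that $(E^2,E.H)=(5,5)$. If $D$ is effective with invariants $(-2,2)$, then by Theorem \ref{T31} $\mathcal O_X(D)$ is initialized and $1$-away ACM; being initialized but not ACM, Theorem \ref{Ton}$(2)$ forces its unique nonvanishing intermediate twist to lie in $\{-1,-2\}$, so $h^1(\mathcal O_X(E-H))=h^1(\mathcal O_X(D))=0$, and by Serre duality $h^2(\mathcal O_X(E-2H))=h^2(\mathcal O_X(D-H))=h^0(\mathcal O_X(-D))=0$ since $D\ne 0$ is effective. Hence $\mathcal O_X(E)$ is $0$-regular, so globally generated by Theorem \ref{CM}$(i)$ and in particular nef. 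Conversely, the same Riemann--Roch computation ($\chi(\mathcal O_X(E-H))=1$ and $h^2(\mathcal O_X(E-H))=h^0(\mathcal O_X(-E))=0$) shows that any nef $E$ with $(E^2,E.H)=(5,5)$ has $E-H$ effective with invariants $(-2,2)$. Thus $D\mapsto D+H$ is a bijection onto the nef classes with $(E^2,E.H)=(5,5)$; I would enumerate these exactly as above, now with $3a-\sum b_i=5$ and $a^2-\sum b_i^2=5$, and then subtract $H=3l-\sum_{i=1}^6 e_i$ (Theorem \ref{cubic}$(c)$) to recover the list of $D$'s.

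It then remains to solve the two systems of Diophantine (in)equalities. What makes the search finite is that combining $2a\ge\sum_{i\ne j}b_i$ with the degree equation forces $b_j\ge a-5$ (resp. $b_j\ge a-4$) for every $j$, so summing over $j$ gives $a\le 8$ (resp. $a\le 6$); together with $a^2-\sum b_i^2\ge 0$ this confines $a$ to a short range. For each admissible $a$ one lists the partitions of $3a-c$ whose sum of squares is $a^2-c'$ — using $\sum b_i^2\ge(\sum b_i)^2/k$ to bound below the number $k$ of nonzero parts — and retains only those satisfying the two nef inequalities; this produces exactly the six classes in each table, and $u(D)$ is then simply the number of orderings of $(b_1,\dots,b_6)$. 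The only genuine labour is this final bookkeeping of partitions with prescribed sum and sum of squares, and it is the step most prone to slips; but there is no conceptual obstacle once the a priori bound on $a$ is in place.
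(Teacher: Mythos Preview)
Your proposal is correct and follows essentially the same approach as the paper: reduce each case to the enumeration of nef classes via $0$-regularity (for $(2,4)$ directly on $D$, for $(-2,2)$ on $E=D+H$), then solve the resulting Diophantine system using the nef criterion of Theorem \ref{N} and the intersection data of Theorem \ref{cubic}. Your write-up is in fact a bit more explicit than the paper's sketch, supplying the converse direction of the bijection (nef $\Rightarrow$ effective via Riemann--Roch) and the a priori bound on $a$, neither of which the paper spells out.
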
 


We end this subsection with the following remarks:

\begin{remark}\normalfont
	$(i)$ From  \cite{BPP}, it follows that for any finite linear projection $\pi :X \to \mathbb P^2$, $\pi_*(\mathcal O_X(D)) \cong \Omega^1_{\mathbb P^2}(1) \oplus \mathcal O_{\mathbb P^2}$ if $(D^2, D.H)=(-2,2)$ and $\pi_*(\mathcal O_X(D))$ is isomorphic to either $\Omega^1_{\mathbb P^2}(2) \oplus \mathcal O_{\mathbb P^2}$ or $\Omega^1_{\mathbb P^2}(2) \oplus \mathcal O_{\mathbb P^2}(-1)$ if $(D^2, D.H)=(2,4)$. Moreover, it can also be seen from \cite{BPP} that not every rank $3$, $1$-away ACM bundle on $\mathbb P^2$ can be obtained as a finite linear projection of a $1$-away ACM line bundle  on $X$.\
	
	$(ii)$ Any  line bundle as in Theorem \ref{T31} is weakly Ulrich.\\
	
	
\end{remark}

	\subsection{Classification of $2$-away ACM line bundles}\label{cl2}\
	
	 As an application of Proposition \ref{degreebound}, in this subsection, we give a proof of the second main theorem of this article i.e. a complete classification of $2$-away ACM line bundles on a nonsingular cubic surface $X$. Before proceeding for the classification theorem, we prepare a lemma to identify the possible twists for which the intermediate cohomology of a $2$-away ACM line bundle of certain degrees on $X$ do not vanish.\\

	\begin{lemma}\label{2awaynecessary}
		Let $D$ be a non-zero effective divisor on $X$ such that $3 \leq D.H \leq 6$ and $\mathcal O_X(D)$ is initialized and $2$-away ACM. Then one of  the following occurs:
		
		$(i)$ If $h^1(\mathcal O_X(D-H)) \neq 0$, then $h^1(\mathcal O_X(D-2H)) \neq 0$.
		
		$(ii)$  If $h^1(\mathcal O_X(D-H)) = 0$ and $h^1(\mathcal O_X(D-2H)) \neq 0$, then $h^1(\mathcal O_X(D-3H)) \neq 0$.
		
	\end{lemma}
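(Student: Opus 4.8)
The plan is to exploit the short exact sequence obtained by restricting to a hyperplane section $H$ (an elliptic curve), combined with the Riemann--Roch computation \eqref{RR} on $X$ and on $H$. For part $(i)$, I would start from the assumption $h^1(\mathcal O_X(D-H)) \neq 0$, which by Serre duality is equivalent to $h^1(\mathcal O_X(-D)) \neq 0$ (since $K_X = -H$). I would then tensor the structure sequence
\begin{align*}
	0 \to \mathcal O_X(-D) \to \mathcal O_X(H-D) \to \mathcal O_H(H-D) \to 0
\end{align*}
and look at the associated long exact cohomology sequence. The key point is that $\deg_H(\mathcal O_H(H-D)) = H^2 - D.H = 3 - D.H \le 0$ because $D.H \ge 3$, so $h^0(\mathcal O_H(H-D))$ is controlled (it is $0$ when $D.H > 3$, and at most $1$ — in fact $0$ once one rules out $D \sim H$ using that $\mathcal O_X(D)$ is initialized — when $D.H = 3$). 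Hence the connecting map forces $H^1(\mathcal O_X(-D)) \hookrightarrow$ into $H^1$ of the next term, giving $h^1(\mathcal O_X(H-D)) \neq 0$, i.e. $h^1(\mathcal O_X(D-2H)) \neq 0$ by Serre duality. This is essentially the same mechanism used in the proof of Proposition \ref{degreebound}$(i)$.

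For part $(ii)$, I would argue similarly but now using $h^1(\mathcal O_X(D-2H)) \neq 0$, equivalently $h^1(\mathcal O_X(H-D)) \neq 0$ by Serre duality. Tensoring the structure sequence for $H$ with $\mathcal O_X(2H-D)$ gives
\begin{align*}
	0 \to \mathcal O_X(H-D) \to \mathcal O_X(2H-D) \to \mathcal O_H(2H-D) \to 0,
\end{align*}
and $\deg_H(\mathcal O_H(2H-D)) = 2H^2 - D.H = 6 - D.H$. For $3 \le D.H \le 6$ this degree satisfies $0 \le \deg_H \le 3$; the subtlety is that this time the restricted bundle $\mathcal O_H(2H-D)$ can have sections, so the connecting map $H^0(\mathcal O_H(2H-D)) \to H^1(\mathcal O_X(H-D))$ need not be zero, and I cannot immediately conclude injectivity of $H^1(\mathcal O_X(H-D)) \to H^1(\mathcal O_X(2H-D))$. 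Here I would use the hypothesis $h^1(\mathcal O_X(D-H)) = 0$, i.e. $h^1(\mathcal O_X(-D)) = 0$: feeding this into the long exact sequence of $0 \to \mathcal O_X(-D) \to \mathcal O_X(H-D) \to \mathcal O_H(H-D) \to 0$ pins down $h^0(\mathcal O_X(H-D)) = h^0(\mathcal O_H(H-D))$ and, more usefully, computes $h^1(\mathcal O_X(H-D))$ exactly via \eqref{RR}. Then a direct Riemann--Roch bookkeeping on $\mathcal O_X(2H-D)$ versus $\mathcal O_X(H-D)$, together with the bound on $h^0(\mathcal O_X(2H-D))$ coming from the restriction sequence above, should show $h^1(\mathcal O_X(2H-D)) \neq 0$, which is $h^1(\mathcal O_X(D-3H)) \neq 0$ by Serre duality.

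The main obstacle I anticipate is precisely the bookkeeping in part $(ii)$: unlike the ``pure cohomology pushing'' in part $(i)$, one must genuinely compare the Euler characteristics of $\mathcal O_X(D-2H)$ and $\mathcal O_X(D-3H)$ and argue that the $h^2$ contributions (equivalently the $h^0$ of the Serre-dual twists $H-D$ and $2H-D$) cannot be large enough to make $h^1(\mathcal O_X(D-3H))$ vanish. Concretely, from \eqref{RR} one has $\chi(\mathcal O_X(D-3H)) = \tfrac{(D-3H)(D-3H+H)}{2}+1$, and one needs to check that for $3 \le D.H \le 6$ — using that $D^2$ is constrained once $h^1(\mathcal O_X(D-H))=0$ gives $D^2 = D.H - 2$ via \eqref{RR} — this forces $h^2(\mathcal O_X(D-3H)) > h^0(\mathcal O_X(D-3H))$, hence $h^1 \ne 0$. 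I would organize this as a short case analysis on the four values $D.H \in \{3,4,5,6\}$, each reducing to an inequality that is immediate from Riemann--Roch once the bound $h^0(\mathcal O_X(2H-D)) \le h^0(\mathcal O_X(H-D)) + h^0(\mathcal O_H(2H-D))$ is in hand. A secondary, minor technical point is the borderline case $D.H = 3$, where $\mathcal O_H(H-D)$ can have a section; here the ``initialized'' hypothesis (ruling out $D \sim H$) and Theorem \ref{Ton}$(3)$ (ruling out $D.H = 1, 2$ type degenerations of components) are exactly what is needed to close the argument.
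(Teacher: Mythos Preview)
Your plan for $(i)$ is essentially the mechanism from Proposition~\ref{degreebound}$(i)$, and it is fine for $D.H\ge 4$. But your handling of the borderline $D.H=3$ is wrong: on the elliptic curve $H$, the condition $h^0(\mathcal O_H(H-D))=0$ is equivalent to $\mathcal O_H(D)\not\cong\mathcal O_H(H)$, i.e.\ to $D|_H\not\sim H|_H$, which is \emph{not} the same as $D\not\sim H$ on $X$. Initializedness only rules out the latter. So your injection $H^1(\mathcal O_X(-D))\hookrightarrow H^1(\mathcal O_X(H-D))$ can fail precisely when $D.H=3$ and $h^1(\mathcal O_X(D-H))=1$ (which does occur, e.g.\ for $(D^2,D.H)=(-1,3)$). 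The paper avoids this entirely by a pure Riemann--Roch/parity argument: from $h^0(\mathcal O_X(D-H))=h^2(\mathcal O_X(D-H))=0$ and $h^1\neq 0$ one gets $D^2\le D.H-4$; assuming $h^1(\mathcal O_X(D-2H))=0$ yields $D^2\ge 3D.H-8$; together these force $D.H\le 2$, contradiction.

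Your plan for $(ii)$ has a more substantive gap. With $h^1(\mathcal O_X(D-H))=0$ you correctly get $D^2=D.H-2$, and then $\chi(\mathcal O_X(D-3H))=9-2D.H$. For $D.H\in\{5,6\}$ this is negative and you are done. But for $D.H=4$ you need $h^0(\mathcal O_X(2H-D))\ge 2$, and your restriction bound only gives $h^0(\mathcal O_X(2H-D))\le h^0(\mathcal O_H(2H-D))=2$; this does \emph{not} exclude $h^0(\mathcal O_X(2H-D))\le 1$, i.e.\ $h^1(\mathcal O_X(D-3H))=0$. (And indeed, $(D^2,D.H)=(2,4)$ with $h^1(\mathcal O_X(D-3H))=0$ is exactly the $1$-away case of Theorem~\ref{T31}.) The missing ingredient is the $2$-away hypothesis itself, which you never use. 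The paper's argument for $(ii)$ is different and shorter: from $h^1(\mathcal O_X(D-H))=0$ one sees $\mathcal O_X(D)$ is $0$-regular, so $h^1(\mathcal O_X(D+tH))=0$ for $t\ge -1$; if in addition $h^1(\mathcal O_X(D-3H))=0$ then $\mathcal O_X(3H-D)$ is $0$-regular, forcing $h^1(\mathcal O_X(D-tH))=0$ for all $t\ge 3$. This leaves only the single twist $t=2$ with nonvanishing $h^1$, so $\mathcal O_X(D)$ would be $1$-away ACM, contradicting the $2$-away assumption.
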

	
	\begin{proof}
		$(i)$ If $h^1(\mathcal O_X(D-H)) \neq 0$, then applying \ref{RR} to $\mathcal O_X(D-H)$, one obtains $D^2 \leq D.H-4$. Now, if on the contrary, we let $h^1(\mathcal O_X(D-2H)) = 0$, then applying \ref{RR} to $\mathcal O_X(D-2H)$, we get $D^2 \geq 3D.H-8$. This coupled with the first inequality give us $D.H \leq 2$, a contradiction.\
		
		$(ii)$ If $h^1(\mathcal O_X(D-H)) = 0$, then it is easy to see that $\mathcal O_X(D)$ is $0$-regular and hence by Theorem \ref{CM}, globally generated and $h^1(\mathcal O_X(D+tH)) =0$ for all $t \geq -1$. By Theorem \ref{Ton}, $(2)$, one has $h^1(\mathcal O_X(D-2H)) \neq 0$. If on the contrary, we assume  $h^1(\mathcal O_X(D-3H)) = 0$. Then again one can deduce that $\mathcal O_X(3H-D)$ is  $0$-regular and hence by Theorem \ref{CM}, globally generated and $h^1(\mathcal O_X(D-(4+t)H))=0$ for all $t \geq -1$, a contradiction as then $\mathcal O_X(D)$ becomes $1$-away ACM.\
		
	\end{proof}

Now we are ready to prove the $2$-away Classification theorem:

\begin{theorem}\label{T35}
	Let $D$ be a non-zero effective divisor on a nonsingular cubic hypersurface $X \subset \mathbb P^3$. Then the following conditions are equivalent: 
	
	$(a)$ $\mathcal O_X(D)$ is initialized  and $2$-away ACM with respect to $H:=\mathcal O_{\mathbb P^3}(1)|_X$.\
	
	$(b)$ One of the following cases occur:
	
	$(i)$ $(D^2, D.H)=(4,6)$,  $D$ is nef and $|3H-D|=\emptyset$.\
	
	$(ii)$ $(D^2, D.H)=(3,5)$, $D$ is nef.\
	
	$(iii)$ $(D^2, D.H)=(0,4)$, $|2H-D|=\emptyset$. \
	
	$(iv)$ $(D^2, D.H)=(-3,3)$.\
	
	$(v)$ $(D^2, D.H)=(-1,3)$.\

\end{theorem}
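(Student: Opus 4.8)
The plan is to prove the two implications separately, following the template already used for the $1$-away case (Theorem \ref{T31}), but now using the degree bound Proposition \ref{degreebound}(i) to restrict to $D.H \le 6$ from the outset. For the direction $(a)\implies(b)$: assuming $\mathcal O_X(D)$ is initialized and $2$-away ACM, Theorem \ref{Ton}(2) shows $\mathcal O_X(D)$ cannot be ACM, so at least one of $h^1(\mathcal O_X(D-H)), h^1(\mathcal O_X(D-2H))$ is nonzero, and by Proposition \ref{degreebound}(i) we have $D.H \le 6$. Since $D.H=1,2$ are handled in \S\ref{12} (they give ACM, $1$-away, or $3$-away, never $2$-away), we may assume $3 \le D.H \le 6$, which is exactly the hypothesis of Lemma \ref{2awaynecessary}. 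That lemma forces the two nonvanishing twists to be consecutive: either $\{D-2H, D-3H\}$ (when $h^1(\mathcal O_X(D-H))=0$) or, when $h^1(\mathcal O_X(D-H)) \ne 0$, the pair starting at $D-H$, which by Lemma \ref{2awaynecessary}(i) must then be $\{D-H, D-2H\}$. In each of these two scenarios I would apply Riemann--Roch \eqref{RR} to the three relevant twists $\mathcal O_X(D+(k-1)H), \mathcal O_X(D+kH), \mathcal O_X(D+(k+1)H)$ bracketing the nonvanishing range, extract the resulting inequalities on $D^2$ and $D.H$ (as in the proof of Theorem \ref{T31}), and solve the resulting integer system. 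In the first scenario (nonvanishing at $-2,-3$) one gets $h^1(\mathcal O_X(D-H))=0$ hence $D$ is $0$-regular, $h^1(\mathcal O_X(D+tH))=0$ for $t \ge -1$, and the Riemann--Roch inequalities at twists $-1,-3,-4$ will pin down $(D^2,D.H)\in\{(4,6),(3,5),(0,4)\}$ together with the emptiness conditions $|3H-D|=\emptyset$, respectively $|2H-D|=\emptyset$ (and nefness via Lemma \ref{nef}); in the second scenario (nonvanishing at $-1,-2$) one gets $D^2 = 3D.H - 8$ from $h^1(\mathcal O_X(D-2H))\ne0$ being the last nonvanishing, $D^2 \le D.H-4$, and $D^2+D.H \ge 0$, forcing $D.H=3$ and $(D^2,D.H)\in\{(-3,3),(-1,3)\}$.

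For the converse $(b)\implies(a)$, I would treat each of the five cases. In each case one first checks $\mathcal O_X(D)$ is initialized: for (i),(ii) this is Lemma \ref{nef} (nefness is assumed); for (iii),(iv),(v) one argues $h^0(\mathcal O_X(D-H))=0$ directly, using that $(D-H).H$ is small and any effective divisor in $|D-H|$ would have impossible self-intersection (e.g.\ in case (iv), $(D-H)^2 = -3-2\cdot3+3 = \ldots$ — a short intersection computation ruling out a configuration of lines), or in case (iii) using the hypothesis $|2H-D|=\emptyset$ combined with Serre duality. Next one computes, via Riemann--Roch and the known vanishing/nonvanishing at the two prescribed twists, that $S(\mathcal O_X(D))$ has exactly the two claimed elements: the nonvanishing at those twists comes from $\chi<0$ plus $h^0$-vanishing (for the negative twists) or $h^2 = h^0(K_X - \cdot) = 0$; the vanishing elsewhere is the real work. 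For the vanishing of $h^1(\mathcal O_X(D+tH))$ for $t$ large positive (resp.\ large negative) I would invoke Lemma \ref{L1}: reduce to a single base case $t=t_0$, where the bound on $h^0$ follows from Proposition \ref{P2} applied to a divisor $\Gamma$ in the relevant linear system $|2H-D|$, $|3H-D|$, etc. (using exactly the self-intersection data listed: $(-3,3),(-1,3),(0,4)$ are precisely the cases covered by Proposition \ref{P2}(ii),(iii),(iv)), or, when the linear system is empty by hypothesis, Riemann--Roch gives the vanishing outright. Case (i) with $(D^2,D.H)=(4,6)$ is the most delicate: $D$ is $0$-regular so $h^1$ vanishes for $t \ge -1$ automatically, and on the negative side one needs $h^1(\mathcal O_X(D-2H))\ne0$, $h^1(\mathcal O_X(D-3H))\ne0$, and $h^1(\mathcal O_X(D-tH))=0$ for $t\ge4$; the last follows from $|3H-D|=\emptyset$ giving $h^0(\mathcal O_X(3H-D))=0$, hence by Lemma \ref{L1}(ii) with base case $t=4$ — noting $h^0(\mathcal O_X(3H-D))=0$ already gives the $t=4$ bound — propagating upward.

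The main obstacle I anticipate is the bookkeeping in the $(a)\implies(b)$ direction: Lemma \ref{2awaynecessary} only tells us the two nonvanishing twists are consecutive and where they start relative to $D-H$ and $D-2H$, but one must still rule out the possibility that the nonvanishing range extends \emph{beyond} $D-3H$ in the first scenario (i.e.\ confirm it is exactly $\{-2,-3\}$ and not $\{-2,-3,-4,\dots\}$), which is where the lower Riemann--Roch inequality at twist $D-4H$ forces $h^0(\mathcal O_X(3H-D))\ne0$ and hence, by the degree constraint, pins the pair $(D^2,D.H)$; and symmetrically one must check the listed emptiness conditions $|3H-D|=\emptyset$, $|2H-D|=\emptyset$ are genuinely \emph{equivalent} to $2$-away-ness (not merely necessary), i.e.\ that dropping them would produce a $1$-away (or ACM) bundle — this is the content of comparing against Theorem \ref{T31}'s list, where $(D^2,D.H)=(2,4)$ already appeared. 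A secondary technical point is that in case (iii), $(D^2,D.H)=(0,4)$, the hypothesis $|2H-D|=\emptyset$ must be used twice — once for initializedness/the lower twist and once (via Lemma \ref{L1} base case) for the upper vanishing — so one should be careful that Proposition \ref{P2}(iv) is only needed on the boundary and the emptiness hypothesis covers the rest.
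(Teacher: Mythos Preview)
Your overall strategy matches the paper's: bound $D.H\le 6$ via Proposition \ref{degreebound}(i), use Lemma \ref{2awaynecessary} to locate the two nonvanishing twists as either $\{-1,-2\}$ or $\{-2,-3\}$, extract Riemann--Roch constraints at the bracketing twists, and for the converse combine Lemma \ref{nef} or self-intersection arguments for initializedness with Lemma \ref{L1} and Proposition \ref{P2} for the $h^1$-vanishings. However, your case assignment is wrong in one place: $(D^2,D.H)=(0,4)$ belongs to the scenario with nonvanishing at twists $-1,-2$, \emph{not} at $-2,-3$. In the $\{-2,-3\}$ scenario, $h^1(\mathcal O_X(D-H))=0$ together with $h^0(\mathcal O_X(D-H))=h^2(\mathcal O_X(D-H))=0$ forces the \emph{equality} $D^2=D.H-2$, which fails for $(0,4)$; the RR constraints there actually give $D.H\in\{4,5,6\}$ and hence $(D^2,D.H)\in\{(2,4),(3,5),(4,6)\}$, after which $(2,4)$ is discarded by Theorem \ref{T31}. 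Conversely, your claim that the $\{-1,-2\}$ scenario yields ``$D^2=3D.H-8$ \dots forcing $D.H=3$'' is incorrect: there is no such equality (the $h^2$ term at twist $-3$, namely $h^0(\mathcal O_X(2H-D))$, need not vanish a priori), and the four genuine inequalities $D^2+D.H\ge 0$, $D^2\le D.H-4$, $D^2-3D.H+10\le 0$, $D^2-5D.H+20\ge 0$ allow $D.H\in\{3,4\}$, producing $(-3,3),(-1,3)$ \emph{and} $(0,4)$; the emptiness $|2H-D|=\emptyset$ then falls out of $\chi(\mathcal O_X(D-3H))=0$.

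A smaller correction for the converse in case (iii): the hypothesis $|2H-D|=\emptyset$ is \emph{not} used for initializedness (that comes from $(D-H)^2=-5$ versus a line having self-intersection $-1$), nor via any Serre-duality trick for $h^0(\mathcal O_X(D-H))$. It enters exactly once, as the base case $t=3$ for Lemma \ref{L1}(ii) to obtain $h^1(\mathcal O_X(D-tH))=0$ for $t\ge 3$; the vanishing for $t\ge 0$ uses Proposition \ref{P2}(iv) applied to $D$ itself (to get $h^0(\mathcal O_X(D))=3$), not the emptiness hypothesis.
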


\begin{proof}
	
	\underline{\textbf{(a) $\implies$ (b)}}
	
	Let $D$ be a non-zero effective divisor on $X$. Assume $(a)$. Then by Proposition \ref{degreebound}, $(i)$, we must have $1 \leq D.H \leq 6$. Note that by Theorem \ref{Ton}, $(3)$, we have $D.H \neq 1$. By discussion in subsection $\S \ref{12}$, it follows that $D.H \neq 2$. Therefore, we must have $3 \leq D.H \leq 6$ and hence by Lemma \ref{2awaynecessary}, there are two possibilities in terms of $h^1$ nonvanishing. In what follows, we separately analyse both the situations.\\
	
	\underline{\textbf{Case-1: $h^1(\mathcal O_X(D-H)) \neq 0, h^1(\mathcal O_X(D-2H)) \neq 0$ and $h^1(\mathcal O_X(D+tH))=0$ for $t \in \mathbb Z \setminus \{-1,-2\}$}}.
	
	Applying \ref{RR} to the line bundles $\mathcal O_X(D), \mathcal O_X(D-H), \mathcal O_X(D-2H)$ and $\mathcal O_X(D-3H)$, we get the following:
	
	\begin{itemize}
		\item $D^2+D.H \geq 0$
		
		\item $D^2 \leq D.H-4$
		
		\item $D^2-3D.H+10 \leq 0$
		
		\item $D^2-5D.H+20 \geq 0$
	\end{itemize}
	
	Note that for $D.H=6$, the second bullet point implies $D^2 \leq 2$ and the fourth bullet point implies $D^2 \geq 10$, a contradiction. Similarly, for $D.H=5$, the second bullet point implies $D^2 \leq 1$ and the fourth bullet point implies $D^2 \geq 5$, a contradiction. For $D.H=4$, the second and fourth bullet point together implies $D^2=0$. So we have the possibility $(D^2, D.H)=(0,4)$ and in this case it is easy to see that $h^0(\mathcal O_X(2H-D))=0$ (thus we achieve $(b)(iii)$). For $D.H=3$, we have $-3 \leq D^2 \leq -1$. It is easy to see that in this case $D^2=-2$ is not possible, otherwise $p_a(D)$ is not an integer. Therefore, we end up having two more possibilities $(D^2, D.H)=(-1,3)$ or $(D^2, D.H)=(-3,3)$ (thus we achieve $(b)(iv), (b)(v)$).\\
	
	\underline{\textbf{Case-2: $h^1(\mathcal O_X(D-2H)) \neq 0, h^1(\mathcal O_X(D-3H)) \neq 0$ and $h^1(\mathcal O_X(D+tH))=0$ for $t \in \mathbb Z \setminus \{-2,-3\}$}}
	
	Applying \ref{RR} to the line bundles $\mathcal O_X(D-H)$ and $\mathcal O_X(D-2H)$, we get the following:
	\begin{itemize}
		\item $D^2=D.H-2$.\
		
		\item $D^2-3D.H+10 \leq 0$.\
	\end{itemize}
	
	Applying \ref{RR} to the line bundle $\mathcal O_X(D-3H)$, we have $h^0(\mathcal O_X(2H-D)) \geq  \frac{D^2-5D.H+18}{2}+2$. Applying cohomology to the S.E.S
	\begin{align}
	 0 \to \mathcal O_X(H-D) \to \mathcal O_X(2H-D) \to \mathcal O_H(2H-D) \to 0,
	 \end{align}
	 
	 we have $h^0(\mathcal O_X(2H-D)) \leq h^0(\mathcal O_H(2H-D)) \leq 3$ for $D.H \in \{3,4,5\}$. For $D.H=6$, $h^0(\mathcal O_X(2H-D))=0$. Therefore, in any case $h^0(\mathcal O_X(2H-D)) \leq 3$.  This means :
	\begin{itemize}
		\item $D^2-5D.H+16 \leq 0$
	\end{itemize}
	Applying \ref{RR} to the line bundle $\mathcal O_X(D-4H)$, we have 
	\begin{itemize}
		\item $D^2-7D.H+38 \geq 0$
	\end{itemize}
	Note that for $D.H=3$, we have from the first bullet point $D^2=1$, a contradiction to the second bullet point. Observe that,  for $D.H=4$, we have from the first bullet point $D^2=2$. Therefore, in this case one possibility is $(D^2, D.H)=(2,4)$. This is not possible because then by Theorem \ref{T31}, $\mathcal O_X(D)$ is $1$-away ACM. Note that for $D.H=5$, we have from the first bullet point $D^2=3$. Therefore, in this case another possibility is $(D^2, D.H)=(3, 5)$ and by Lemma \ref{nef}, $D$ is nef (hence we obtain $(b) (ii)$). Note that for $D.H=6$, we have from the first bullet point $D^2=4$. Therefore, in this case another possibility is $(D^2, D.H)=(4, 6)$, $D$ is nef (by Lemma \ref{nef}) and by \ref{RR}, $h^0(\mathcal O_X(3H-D))=0$ (hence we obtain $(b)(i)$).\\
	
	
	
	\underline{\textbf{(b) $\implies$ (a)}}
	
 Let	$D$ be a non-zero effective divisor on $X$. Assume $(b)$. We will analyze the five cases separately.\
 
 
 \underline{\textbf{$(b)(i) : (D^2, D.H)=(4,6)$, $D$ is nef, $|3H-D|= \emptyset$ }}
 
 From Lemma \ref{nef}, we have $\mathcal O_X(D)$ is initialized. Note that applying \ref{RR} to $\mathcal O_X(D-2H)$, we have $h^1(\mathcal O_X(D-2H))=3$. It is easy to see that $h^0(\mathcal O_X(2H-D)) =0$ and hence applying \ref{RR} to $\mathcal O_X(D-3H)$, we have $h^1(\mathcal O_X(D-3H))=3$.  Therefore, it is enough to show that $h^1(\mathcal O_X(D+tH)) =0$ for all $t \in \mathbb Z \setminus \{-2,-3\}$. We divide this into two separate cases and use induction to establish both the cases:\
 \
 
 First, we  show that $h^1(\mathcal O_X(D+tH))=0$ for all $t \geq -1$.  Note that by \ref{RR}, it is enough to show that $h^0(\mathcal O_X(D+tH))\leq  \frac{3t^2+15t+12}{2} $ for all $t \geq -1$. By Lemma \ref{L1}, $(i)$, if we can show this for $t=-1$, then by induction we are done. For $t=-1$, this is indeed true by	 initializedness.\


 
 Next, we  show that $h^1(\mathcal O_X(D-tH))=0$ for all $t \geq 4$.  By \ref{RR}, it is enough to show $h^0(\mathcal O_X((t-1)H-D))\leq  \frac{3t^2-15t+12}{2} $ for $t =4$. By  Lemma \ref{L1}, $(ii)$, if we can show this for $t=4$, then by induction we are through. Note that for $t=4$, we have this from the assumption $|3H-D|=\emptyset$.\\

 
 \underline{\textbf{$(b)(ii) : (D^2, D.H)=(3,5)$, $D$ is nef}}
 
 From Lemma \ref{nef}, we have $\mathcal O_X(D)$ is initialized. Applying \ref{RR} to $\mathcal O_X(D-2H)$, we get $h^1(\mathcal O_X(D-2H))=2$. Next, note that $h^0(\mathcal O_X(2H-D)) =0$, otherwise if $\Gamma \in |2H-D|$, then $\Gamma.H=1$ and hence $\Gamma^2=-1$, but $(2H-D)^2=-5$, a contradiction. Then applying \ref{RR} to $\mathcal O_X(D-3H)$, we get $h^1(\mathcal O_X(D-3H))=1$. Therefore, it is enough to show that $h^1(\mathcal O_X(D+tH)) =0$ for all $t \in \mathbb Z \setminus \{-2,-3\}$. We divide this into two separate cases and use induction to establish both the cases:\

  First, we show that $h^1(\mathcal O_X(D+tH))=0$ for all $t \geq -1$.  Note that by \ref{RR}, it is enough to show $h^0(\mathcal O_X(D+tH))\leq  \frac{3t^2+13t+10}{2} $ for all $t \geq -1$. By Lemma \ref{L1}, $(i)$, if we can show this for $t=-1$, then by induction we are done. For $t=-1$, this is indeed true by	 initializedness.\
  
  Next, we show that $h^1(\mathcal O_X(D-tH))=0$ for all $t \geq 4$.  By \ref{RR}, it is enough to show $h^0(\mathcal O_X((t-1)H-D))\leq  \frac{3t^2-13t+10}{2} $ for $t =4$. By  Lemma \ref{L1}, $(ii)$, if we can show this for $t=4$, then by induction we are through. Note that for $t=4$, $h^0(\mathcal O_X(3H-D))=3$ follows from Proposition \ref{P2}, $(iv)$.\\
  
 
  \underline{\textbf{$(b)(iii) : (D^2, D.H)=(0,4)$, $|2H-D| = \emptyset$ }}
  
  
   Let $\Gamma \in |D-H|$. Then $\Gamma$ is a line and hence $\Gamma^2=-1$. But $(D-H)^2=-5$, a contradiction. Therefore, $\mathcal O_X(D)$ is initialized.  Applying \ref{RR} to $\mathcal O_X(D-H)$, we get $h^1(\mathcal O_X(D-H))=1$. Next, applying \ref{RR} to $\mathcal O_X(D-2H)$, we get $h^1(\mathcal O_X(D-2H))=2$. Therefore, it is enough to show that $h^1(\mathcal O_X(D+tH)) =0$ for all $t \in \mathbb Z \setminus \{-1,-2\}$. We divide this into two separate cases and use induction to establish both the cases:\

  
  
  First, we  show that $h^1(\mathcal O_X(D+tH))=0$ for all $t \geq 0$. Note that by \ref{RR}, it is enough to show $h^0(\mathcal O_X(D+tH))\leq  \frac{3t^2+11t+6}{2} $ for all $t \geq 0$. By Lemma \ref{L1}, $(i)$, if we can show this for $t=0$, then by induction we are done. For $t=0$, $h^0(\mathcal O_X(D))=3$ follows from Proposition \ref{P2}, $(iv)$.\


  Next, we show that $h^1(\mathcal O_X(D-tH))=0$ for all $t \geq 3$. By \ref{RR}, it is enough to show $h^0(\mathcal O_X((t-1)H-D))\leq  \frac{3t^2-11t+6}{2} $ for $t \geq 3$. By  Lemma \ref{L1}, $(ii)$, if we can show this for $t=3$, then by induction we are through. Note that for $t=3$, we have this from the assumption $|2H-D|=\emptyset$.\\
  
  
  \underline{\textbf{$(b)(iv) : (D^2, D.H)=(-3,3)$ }}
  
  	 If $|D-H| \neq \emptyset$, then we have $\mathcal O_X(D) \cong \mathcal O_X(H)$, which gives us $D^2 =3$, a contradiction. Therefore,  $\mathcal O_X(D)$ is initialized.  Applying \ref{RR} to $\mathcal O_X(D-H)$, we get $h^1(\mathcal O_X(D-H))=2$. Next, applying \ref{RR} to $\mathcal O_X(D-2H)$, we get $h^1(\mathcal O_X(D-2H))=2$. Therefore, it is enough to show that $h^1(\mathcal O_X(D+tH)) =0$ for all $t \in \mathbb Z \setminus \{-1,-2\}$. We divide this into two separate cases and use induction to establish both the cases:\
  	 

  
  First,  we show that $h^1(\mathcal O_X(D+tH))=0$ for all $t \geq 0$. Note that by \ref{RR}, it is enough to show $h^0(\mathcal O_X(D+tH))\leq  \frac{3t^2+9t+2}{2} $ for all $t \geq 0$. By Lemma \ref{L1}, $(i)$, if we can show this for $t=0$, then by induction we are done. For $t=0$, $h^0(\mathcal O_X(D))=1$ follows from Proposition \ref{P2}, $(ii)$.\


  Next, we  show that $h^1(\mathcal O_X(D-tH))=0$ for all $t \geq 3$. By \ref{RR}, it is enough to show $h^0(\mathcal O_X((t-1)H-D))\leq  \frac{3t^2-9t+2}{2} $ for $t \geq 3$. By  Lemma \ref{L1}, $(ii)$, if we can show this for $t=3$, then by induction we are through. Note that for $t=3$, $h^0(\mathcal O_X(2H-D))=1$ follows from Proposition \ref{P2}, $(ii)$.\\

  \underline{\textbf{$(b)(v) : (D^2, D.H)=(-1,3)$ }}
  
    If $|D-H| \neq \emptyset$, then we have $\mathcal O_X(D) \cong \mathcal O_X(H)$, which gives us $D^2 =3$, a contradiction. Therefore, $\mathcal O_X(D)$ is initialized. Applying \ref{RR} to $\mathcal O_X(D-H)$, we get $h^1(\mathcal O_X(D-H))=1$. Next, applying \ref{RR} to $\mathcal O_X(D-2H)$, we get $h^1(\mathcal O_X(D-2H))=1$. Therefore, it is enough to show that $h^1(\mathcal O_X(D+tH)) =0$ for all $t \in \mathbb Z \setminus \{-1,-2\}$. We divide this into two separate cases and use induction to establish both the cases:\
    
  
  First, we  show that $h^1(\mathcal O_X(D+tH))=0$ for all $t \geq 0$. Note that by \ref{RR}, it is enough to show $h^0(\mathcal O_X(D+tH))\leq  \frac{3t^2+9t+4}{2} $ for all $t \geq 0$. By Lemma \ref{L1}, $(i)$, if we can show this for $t=0$, then by induction we are done. For $t=0$, $h^0(\mathcal O_X(D))=2$ follows from Proposition \ref{P2}, $(iii)$.\
  

  Next, we  show that $h^1(\mathcal O_X(D-tH))=0$ for all $t \geq 3$. By \ref{RR}, it is enough to show $h^0(\mathcal O_X((t-1)H-D))\leq  \frac{3t^2-9t+4}{2} $ for $t \geq 3$. By  Lemma \ref{L1}, $(ii)$, if we can show this for $t=3$, then by induction we are through. Note that for $t=3$, $h^0(\mathcal O_X(2H-D))=2$ follows from Proposition \ref{P2}, $(iii)$.\

\end{proof}

Next, we proceed to give an explicit list of divisors appearing in the above classification. Our strategy of obtaining the list is as follows:

\begin{itemize}
	
\item If $(D^2,D.H)=(4,6)$ and $D$ is nef,  then using Theorem \ref{cubic} and Theorem \ref{N}, one can  find out the list for $D$ by writing the possible tuples $(a,b_1,...,b_6)$ such that $a,b_i \geq 0$, $3a-\sum_{i=1}^{6} b_i=6, a^2-\sum_{i=1}^{6} b_i^2=4, a\geq b_i+b_j$ for $i \neq j$ and $2a \geq \sum_{i\neq j} b_i$ for each $j \in \{1,...6\}$. Next, for each such $D$,  we look at the expression $3H-D$. If it is of the form $D_1+2D_2$ for two non-intersecting lines $D_1,D_2$ on $X$, then we discard the possibility (because then $h^0(\mathcal O_X(3H-D)) \neq 0$). For each of the remaining cases we see that $4H-D$ is nef and big. Using Kawamata-Viehweg vanishing, we get $h^0(\mathcal O_X(3H-D)) = h^{1}(\mathcal{O}_{X}(D-4H))=0$ and we obtain the complete list.\

\item If $(D^2,D.H)=(3,5)$ and $D$ is nef, then using Theorem \ref{cubic} and Theorem \ref{N}, one can  find out the list for $D$ by writing the possible tuples $(a,b_1,...,b_6)$ such that $a,b_i \geq 0$, $3a-\sum_{i=1}^{6} b_i=5, a^2-\sum_{i=1}^{6} b_i^2=3, a\geq b_i+b_j$ for $i \neq j$ and $2a \geq \sum_{i\neq j} b_i$ for each $j \in \{1,...6\}$.\

\item If $(D^2,D.H)=(0,4)$, then letting $E=D+H$ one can see that $\mathcal O_X(E)$ is $0$-regular and therefore by Theorem \ref{CM}, globally generated and hence nef. Then using Theorem \ref{cubic} and Theorem \ref{N}, one can then find out the list for $E$ by writing the possible tuples $(a,b_1,...,b_6)$ such that $a,b_i \geq 0$, $3a-\sum_{i=1}^{6} b_i=7, a^2-\sum_{i=1}^{6} b_i^2=11, a\geq b_i+b_j$ for $i \neq j$ and $2a \geq \sum_{i\neq j} b_i$ for each $j \in \{1,...6\}$. Once the list of $E$ is prepared, the list of $D$ can be obtained by using the expression of $H$ in terms of $l$ and the exceptional divisors. Next, for each such $D$, we look at the expression $2H-D$. If it is of the form $2D_1$ for a line $D_1$ on $X$, then we discard the possibility (because then $h^0(\mathcal O_X(2H-D)) \neq 0$). For each of the remaining cases we see that $3H-D$ is nef and big. Using Kawamata-Viehweg vanishing, we get $h^0(\mathcal O_X(2H-D)) = h^{1}(\mathcal{O}_{X}(D-3H))=0$ and we obtain the complete list.\

\item If $(D^2,D.H)=(-3,3)$, then letting $E=D+H$, one can see that $\mathcal O_X(E)$ is $0$-regular and therefore by Theorem \ref{CM}, globally generated and hence nef. Using Theorem \ref{cubic} and Theorem \ref{N}, one can then find out the list for $E$ by writing the possible tuples $(a,b_1,...,b_6)$ such that $a,b_i \geq 0$, $3a-\sum_{i=1}^{6} b_i=6, a^2-\sum_{i=1}^{6} b_i^2=6, a\geq b_i+b_j$ for $i \neq j$ and $2a \geq \sum_{i\neq j} b_i$ for each $j \in \{1,...6\}$. Once the list of $E$ is prepared, the list of $D$ can be obtained by using the expression of $H$ in terms of $l$ and the exceptional divisors.\

\item If $(D^2,D.H)=(-1,3)$, then letting $E=D+H$, one can see that $\mathcal O_X(E)$ is $0$-regular and therefore by Theorem \ref{CM}, globally generated and hence nef. Using Theorem \ref{cubic} and Theorem \ref{N}, one can then find out the list for $E$ by writing the possible tuples $(a,b_1,...,b_6)$ such that $a,b_i \geq 0$, $3a-\sum_{i=1}^{6} b_i=6, a^2-\sum_{i=1}^{6} b_i^2=8, a\geq b_i+b_j$ for $i \neq j$ and $2a \geq \sum_{i\neq j} b_i$ for each $j \in \{1,...6\}$. Once the list of $E$ is prepared, the list of $D$ can be obtained by using the expression of $H$ in terms of $l$ and the exceptional divisors.\\
\end{itemize}

\vspace{-35mm}

We summarize the above discussion in the following Proposition.\\

\vspace{-44mm}

\begin{proposition}\label{P37}
	Upto permutation of exceptional divisors on $X$, the initialized, $2$-away ACM divisors (w.r.to  $H:= \mathcal O_{\mathbb P^3}(1)|_X$) on $X$  are listed as follows:
	
	\vspace{-33mm}
	\begin{center}
		\renewcommand{\arraystretch}{1.3}
		\begin{tabular}{ |p{2.2cm}|p{5.7cm}|p{2cm}|  }

			\hline
			$(D^2, D.H)$ & $D$  & $u(D)$ \\
			\hline
			$(4,6)$+ & $2l$ & $1$\\
			$D$ nef + & $6l-4e_1-2e_2-2e_3-2e_4-2e_5$ &  $\binom{6}{5} \cdot \binom{5}{4}$\\
			$|3H-D| =\emptyset$ & $10l-4e_1-4e_2-4e_3-4e_4-4e_5-4e_6$ & $1$ \\
			
%
%
			\hline
		\end{tabular}
	\end{center}

\vspace{-23mm}

\pagebreak

\begin{center}
	\renewcommand{\arraystretch}{1.3}
	\begin{tabular}{ |p{2.2cm}|p{5.7cm}|p{2cm}|  }

		\hline
		$(D^2, D.H)$ & $D$  & $u(D)$ \\
		
		\hline
		
		& $2l-e_1$ & $6$\\
		& $3l-2e_1-e_2-e_3$   & $\binom{6}{3}\cdot \binom{3}{2}$ \\
		&$4l-3e_1-e_2-e_3-e_4-e_5$ & $\binom{6}{5} \cdot \binom{5}{4}$\\
		$(3,5)+$  &$4l-2e_1-2e_2-2e_3-e_4$ & $\binom{6}{2}$ \\
		$D$ nef & $6l-4e_1-2e_2-2e_3-2e_4-2e_5-e_6$ & $2\binom{6}{2}$\\
		& $6l-3e_1-3e_2-3e_3-2e_4-e_5-e_6$ & $3\binom{6}{3}$   \\
		& $7l-4e_1-3e_2-3e_3-2e_4-2e_5-2e_6$ & $3\binom{6}{3}$   \\
		& $8l-4e_1-3e_2-3e_3-3e_4-3e_5-3e_6$ &  $6$  \\
		\hline
		
		\hline
					& $l+e_6$ & $6$\\
					& $2l-e_1-e_2-e_3+e_6$   & $\binom{6}{2}$ \\
					&$3l-2e_1-2e_2-e_3$ & $3 \binom{6}{3}$\\
					$(0,4)+$  &$3l-2e_1-e_2-e_3-e_4-e_5+e_6$ & $6$ \\
					$|2H-D|=\emptyset$ & $4l-3e_1-2e_2-e_3-e_4-e_5$ & $6$\\
					& $5l-3e_1-3e_2-2e_3-e_4-e_5-e_6$ & $12 \binom{5}{3}$   \\
				& $6l-3e_1-3e_2-3e_3-2e_4-2e_5-e_6$ & $\binom{6}{3} \cdot \binom{3}{2}$  \\
					& $7l-3e_1-3e_2-3e_3-3e_4-3e_5-2e_6$ & $6$   \\
					
					\hline
			
			& $e_4+e_5+e_6$ & $\binom{6}{3}$\\
			& $l-e_1-e_2+e_5+e_6$   & $\binom{6}{4} \cdot \binom{4}{2}$\\
			& $2l-2e_1-e_2-e_3+e_6$ & $2 \binom{6}{4} \binom{4}{2}$\\
			$(-3,3)$  & $3l-3e_1-e_2-e_3-e_4$ & $4 \binom{6}{4}$ \\
			& $3l-2e_1-2e_2-e_3-e_4-e_5+e_6$ & $6 \binom{5}{3}$\\
			& $4l-3e_1-2e_2-2e_3-e_4-e_5-e_6$ & $3\binom{6}{3}$   \\
			& $5l-3e_1-3e_2-2e_3-2e_4-e_5-e_6$ &  $\binom{6}{2} \binom{4}{2}$  \\
			& $6l-3e_1-3e_2-3e_3-2e_4-2e_5-2e_6$ &  $\binom{6}{3}$  \\

			
			\hline
			& $l-e_1+e_6$ & $2 \binom{6}{2}$\\
			& $2l-2e_1-e_2$   & $2 \binom{6}{2}$ \\
			& $2l-e_1-e_2-e_3-e_4+e_6$ & $5 \binom{6}{5}$ \\
			$(-1,3)$  & $3l-2e_1-2e_2-e_3-e_4$ & $\binom{6}{2} \cdot \binom{4}{2}$ \\
			& $4l-3e_1-2e_2-e_3-e_4-e_5-e_6$ & $2 \binom{6}{2}$\\
			& $4l-2e_1-2e_2-2e_3-e_5$ & $4 \cdot \binom{6}{2}$   \\
			& $5l-3e_1-2e_2-2e_3-2e_4-2e_5-e_6$ &  $2 \binom{6}{4} $  \\

			\hline
	\end{tabular}
\end{center}


%
%
%
%
%
%

	where $u(D)$ is the number of divisors obtained permuting the exceptional divisors in the writing of $D$. 
\end{proposition}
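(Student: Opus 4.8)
The plan is to convert the cohomological characterization of Theorem \ref{T35} into a finite Diophantine enumeration by means of the lattice structure of $\mathrm{Pic}(X)$ recorded in Theorem \ref{cubic} and the nefness criterion of Theorem \ref{N}. Writing a class as $D\sim al-\sum_{i=1}^{6}b_ie_i$, parts $(e)$ and $(f)$ of Theorem \ref{cubic} translate its two invariants into $D.H=3a-\sum_i b_i$ and $D^2=a^2-\sum_i b_i^2$. Hence each of the five numerical types $(D^2,D.H)\in\{(4,6),(3,5),(0,4),(-3,3),(-1,3)\}$ isolated by Theorem \ref{T35} amounts to solving the pair of equations $3a-\sum_i b_i=D.H$ and $a^2-\sum_i b_i^2=D^2$ subject to an appropriate positivity constraint. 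The solution set is automatically finite: Cauchy--Schwarz gives $\sum_i b_i^2\ge\frac16\bigl(\sum_i b_i\bigr)^2=\frac16(3a-D.H)^2$, so $D^2\le a^2-\frac16(3a-D.H)^2=-\frac{a^2}{2}+a\,D.H-\frac16(D.H)^2$, a downward parabola in $a$; with $D^2$ and $D.H$ fixed this confines $a$ to a bounded interval, leaving only finitely many admissible tuples $(a,b_1,\dots,b_6)$.

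The positivity constraint is furnished by nefness through Theorem \ref{N}. For the two families in which Theorem \ref{T35} already forces $D$ to be nef, namely $(4,6)$ and $(3,5)$, I would impose the inequalities $b_i\ge0$, $a\ge b_i+b_j$ and $2a\ge\sum_{i\ne j}b_i$ directly on $D$ and read off every solution up to the $S_6$-action permuting $e_1,\dots,e_6$. In the remaining three families $(0,4)$, $(-3,3)$, $(-1,3)$ the class $D$ itself is in general not nef (it may carry exceptional curves with negative coefficient), so I would instead pass to $E:=D+H$. This $E$ is $0$-regular: one has $h^1(\mathcal O_X(E-H))=h^1(\mathcal O_X(D))=0$, because in these three cases Theorem \ref{T35} locates the only non-vanishing intermediate twists of $\mathcal O_X(D)$ at $-1$ and $-2$, while $h^2(\mathcal O_X(E-2H))=h^2(\mathcal O_X(D-H))=h^0(\mathcal O_X(-D))=0$ by Serre duality and $K_X=-H$, since $D$ is nonzero effective. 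By Theorem \ref{CM}, $E$ is then globally generated, hence nef, and I would enumerate $E$ with $E.H=D.H+3$ and $E^2=D^2+2\,D.H+3$ exactly as before; translating back through $D=E-H$ (decreasing $a$ by $3$ and each $b_i$ by $1$) produces the list for $D$. Effectivity of every listed class is automatic, since $h^2(\mathcal O_X(D))=h^0(\mathcal O_X(-H-D))=0$ forces $h^0(\mathcal O_X(D))\ge\chi(\mathcal O_X(D))>0$ in each case by \ref{RR}.

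It remains to impose the two non-numerical side conditions attached to the families $(4,6)$ and $(0,4)$. For $(4,6)$ one keeps only the nef classes with $|3H-D|=\emptyset$: I would test each enumerated $D$ against the residual class $3H-D$, discarding it exactly when $3H-D$ takes the form $D_1+2D_2$ for two disjoint lines (so that $h^0\ne0$); for every surviving class $4H-D$ is nef with $(4H-D)^2=4>0$, hence nef and big, so Kawamata--Viehweg gives $h^1(\mathcal O_X(3H-D))=h^2(\mathcal O_X(3H-D))=0$ and \ref{RR} then forces $h^0(\mathcal O_X(3H-D))=\chi(\mathcal O_X(3H-D))=0$, confirming emptiness. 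The family $(0,4)$ is treated identically with $2H-D$ in place of $3H-D$ and the nef, big class $3H-D$ in place of $4H-D$. Finally, for each surviving representative the multiplicity $u(D)$ is the length of its $S_6$-orbit, namely $6!$ divided by the order of the stabilizer of the coefficient pattern $(b_1,\dots,b_6)$, which is precisely the multinomial count tabulated.

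I expect the conceptual content to reside entirely in the reduction supplied by Theorem \ref{T35}, the nefness translation via Theorems \ref{cubic} and \ref{N}, and the Kawamata--Viehweg vanishing used for the emptiness conditions. The principal labour, and the only genuine source of error, is the exhaustive bookkeeping of the bounded Diophantine systems: I must ensure that no admissible tuple is omitted, that classes equivalent under permutation of the $e_i$ are recorded once, and that the orbit length $u(D)$ is correctly computed for each representative.
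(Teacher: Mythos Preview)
Your proposal is correct and follows essentially the same route as the paper: enumerate nef classes via Theorem~\ref{N} (directly on $D$ for the families $(4,6)$ and $(3,5)$, and on the $0$-regular shift $E=D+H$ for $(0,4)$, $(-3,3)$, $(-1,3)$), then filter the $(4,6)$ and $(0,4)$ lists by the emptiness conditions, confirming the survivors via Kawamata--Viehweg applied to $4H-D$ (respectively $3H-D$). The only cosmetic difference is that you invoke Theorem~\ref{T35} to justify $h^1(\mathcal O_X(D))=0$ for the shift argument, whereas the paper simply asserts $0$-regularity of $E$; both are valid since you are enumerating classes already known to satisfy the full condition of Theorem~\ref{T35}$(b)$, and the possible overcount in the $(0,4)$ family is removed at the filtering step.
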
 

	\begin{remark}\normalfont
		
		
		$(i)$ Divisors appearing in Theorem \ref{T35}, $(b) (iii),(iv),(v) $ are weakly Ulrich and the remaining are not weakly Ulrich.\
		
		
		$(ii)$ We document a few observation about the case Theorem \ref{T35}, $(b) (i) $:
		
		\begin{itemize}
			\item No divisor with $D^{2} = 4$, and $D \cdot H=6$ is ample. Let $D = al - \sum_{i=1}^{6}b_{i}e_{i}$. Then we get $3a-\sum_{i=1}^{6}b_{i} = 6$, and $a^{2} - \sum_{i=1}^{6}b_{i}^{2} =4$. By classification of ample divisors on $X$, we need $b_{i} >0$, $a > b_{i} + b_{j}$ for any pair $i, j$, and $2a > \sum_{i}^{i \ne j} b_{i}$ for all $j$. These relations come from the intersection with lines classes $e_{i}$, $l-e_{i}-e_{j}$, and $l-\sum_{i}^{i \ne j} e_{i}$. Let $b_{6} = \text{min}\{b_{1}, b_{2}, \cdots b_{6}\}$. Then we have $2a > b_{1} + b_{2} + \cdots + b_{5}$ and $2a + b_{6} > 3a-6$. We have $a \le b_{6} + 5$. Thus, $b_{i} \le 4$ and $a \le 10$. Thus, atleast one of $b_{1}, \cdots, b_{5}$ is $\le 3$. So $b_{6} \le 3$. We also need $a \ge 4$. For $a =4$, $b_{i} =1$, and $a^{2} - \sum_{i=1}^{6}b_{i}^{2} =4$ is not satisfied. For $a=5$, $b_{i}$ can only be $2, 1$(atleast two of them is $1$) or $3, 1$(only one of them is $3$). Again $\sum_{i=1}^{6}b_{i}^{2} =21$ will not be satisfied. Similar arguments for other values of $a$. \
			
			\item But the collection contains nef divisors, for e.g., $D = 2l$. Also it contains non nef divisors e.g., $D = H + 2e_{1} +e_{2} = 3l + e_{1} -e_{3} - \cdots - b_{6}$. $D \cdot e_{1} =-1$.\
			
			\item If $D$ satisfies all the conditions of Theorem \ref{T35}, $(b) (i) $, then  $D$ is nef, and $D^{2} =4 >0$ implies that $D$ is big. Also $D$ is globally generated by Castelnuovo-Mumford regularity. But still $H^{0}(\mathcal{O}_{X}(3H-D)) $ may not be zero,  e.g., $D =9l-5E_{1}-4E_{2}-3E_{3}-3E_{4}-3E_{5}-3E_{6}$. Thus, geometrically the condition $H^{0}(\mathcal{O}_{X}(3H-D))=0$ is milder than $D$ being ample, yet stronger than $D$ being positive (nef, big, globally generated).\
			
			\item \label{ex1}
			Let $D = 9l-5e_{1}-4e_{2}-3e_{3}-3e_{4}-3e_{5}-3e_{6}$. Then $D$ is nef by Theorem \eqref{N}. So we will have $\mathcal O_X(D)$ is initialized, and from \ref{RR}, it follows that $H^{1}(\mathcal{O}_{X}(D-2H)) \ne 0$, and $H^{1}(\mathcal{O}_{X}(D-3H)) \ne 0$. Since $3H-D$ is effective, by \ref{RR}, we have $H^{1}(\mathcal{O}_{X}(3H-D)) \ne 0$ and hence by Serre duality $H^{1}(\mathcal{O}_{X}(D-4H)) \ne 0$. 
			We can check $5H-D$ is nef by Theorem \eqref{N}. The divisor $4H-D$ is effective. Since $5H-D$ is the sum of ample and effective divisor, it is big. By Kawamata-Viehweg vanishing we have $H^{1}(\mathcal{O}_{X}(D-5H))=0$. Since from initializedness $h^0(\mathcal O_X(D-H))=0$, from \ref{RR}, it follows that $h^1(\mathcal O_X(D-H))=0$. Therefore, by Lemma \ref{general}, $\mathcal O_X(D)$ is an initialized and $3$-away ACM line bundle.\

		\end{itemize}

	\end{remark}

As an application of our main theorems (i.e. Theorem \ref{T31} and Theorem \ref{T35}), in the following Corollary, we give a characterization of weakly Ulrich line bundles on $X$ that are not Ulrich.\

\begin{corollary}\label{notUlrich}
	Let $D$ be a non-zero effective divisor on a nonsingular cubic hypersurface $X \subset \mathbb P^3$. Then the following conditions are equivalent: 
	
	$(a)$ $\mathcal O_X(D)$ is initialized  and weakly Ulrich but not Ulrich (with respect to  $H:=\mathcal O_{\mathbb P^3}(1)|_X$).\
	
	$(b)$ One of the following occur:
	
	$(i)$ $(D^2,D.H)= (0,2)$.\
	
	$(ii)$ $(D^2,D.H)= (-1,1)$.\
	
	$(iii)$ $(D^2,D.H)= (-2,2)$.\
	
	$(iv)$ $(D^2,D.H)= (2,4)$.\
	
	$(v)$ $(D^2,D.H)= (0,4)$, $|2H-D| =\emptyset$.\
	
	$(vi)$ $(D^2,D.H)= (-3,3)$.\
	
	$(vii)$ $(D^2,D.H)= (-1,3)$.\
\end{corollary}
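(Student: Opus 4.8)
The plan is to combine the two classification theorems with the cohomological definition of weakly Ulrich and Ulrich bundles. First I would recall that for a surface, an initialized bundle is weakly Ulrich precisely when it is ACM, $1$-away ACM, or $2$-away ACM with the non-vanishing intermediate cohomology concentrated at twists $-1$ and/or $-2$ (this is exactly the remark following Definition \ref{Wul}). Moreover, an initialized weakly Ulrich line bundle is Ulrich if and only if it is ACM and $h^0(\mathcal O_X(D)) = 3\cdot\operatorname{rank} = 3$, equivalently (via Riemann--Roch \eqref{RR} for an initialized ACM line bundle with $D^2=D.H-2$) if and only if $(D^2,D.H)=(0,2)$... wait — I must be careful: Ulrich on a cubic forces $h^0 = 3$ and $D^2 = D.H - 2$, and one checks the Ulrich line bundles on $X$ are exactly those with $(D^2,D.H)=(1,3)$. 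So ``weakly Ulrich but not Ulrich'' splits into three disjoint families: (A) initialized and ACM but not Ulrich; (B) initialized and $1$-away ACM (all of which are weakly Ulrich, by the remark in \S\ref{cl1}); and (C) initialized and $2$-away ACM with the non-vanishing twists being exactly $-1$ and $-2$.

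For family (B), Theorem \ref{T31} immediately gives $(D^2,D.H)\in\{(-2,2),(2,4)\}$, yielding cases $(b)(iii)$ and $(b)(iv)$. For family (C), I would invoke Theorem \ref{T35} together with the observation (recorded in the first remark after Proposition \ref{P37}) that among the five cases of Theorem \ref{T35}$(b)$, precisely $(iii)$, $(iv)$, $(v)$ have the non-vanishing intermediate cohomology at twists $-1$ and $-2$, while $(i)$ and $(ii)$ have them at twists $-2$ and $-3$. This yields cases $(b)(v)$, $(b)(vi)$, $(b)(vii)$. For family (A), I would use Theorem \ref{Ton}$(3)$: an initialized ACM line bundle is $\mathcal O_X$ or satisfies $D^2 = D.H - 2$ with $0 < D.H \le H^2 = 3$; the weakly Ulrich condition further forces $h^i(\mathcal O_X(D-t)) = 0$ for $t \ge i+2$ and $t \le i-1$, which for an ACM bundle on a surface is automatic except it rules out $\mathcal O_X$ itself (not initialized in the required sense after the standard normalization — actually $\mathcal O_X$ is initialized; but it is ACM and has $h^0=1\ne 3$, hence weakly Ulrich and not Ulrich, giving $(D^2,D.H)=(0,0)$, which must be excluded by the hypothesis that $D$ is a \emph{non-zero} effective divisor). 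So for $D$ non-zero, ACM, initialized, the possibilities are $D.H \in \{1,2,3\}$ with $D^2 = D.H-2$, i.e. $(-1,1)$, $(0,2)$, $(1,3)$; the last is Ulrich and is excluded, giving cases $(b)(i)$ and $(b)(ii)$. I should double-check that every such $D$ is genuinely weakly Ulrich — for ACM line bundles on a surface, conditions $(i)$ and $(ii)$ of Definition \ref{Wul} that don't follow from ACM-ness are $h^0(\mathcal O_X(D-t))=0$ for $t\ge 2$ and $h^2(\mathcal O_X(D-t))=0$ for $t\le 0$; by Serre duality and initializedness plus the degree bound $D.H\le 3$, these hold.

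Conversely, for each listed pair I would verify directly that $\mathcal O_X(D)$ is initialized, weakly Ulrich, and not Ulrich, reading off the required cohomology from the sufficiency directions already proved in Theorems \ref{T31} and \ref{T35} (for $(b)(iii)$--$(b)(vii)$) and from Theorem \ref{Ton} plus \eqref{RR} (for $(b)(i)$, $(b)(ii)$); non-Ulrichness is immediate since in cases $(i),(ii)$ one has $h^0(\mathcal O_X(D)) = D.H - 1 \in \{0,1\} \ne 3$, and in cases $(iii)$--$(vii)$ the bundle is not even ACM. The main subtlety, and the step I expect to require the most care, is the bookkeeping of \emph{which} twists carry the non-vanishing $h^1$ in the $2$-away case: one must confirm, using Riemann--Roch applied to $\mathcal O_X(D-H)$, $\mathcal O_X(D-2H)$, $\mathcal O_X(D-3H)$, that the Theorem \ref{T35} cases $(i),(ii)$ (with $(D^2,D.H)=(4,6)$ and $(3,5)$) have $h^1(\mathcal O_X(D-H))=0$, $h^1(\mathcal O_X(D-2H))\ne 0$, $h^1(\mathcal O_X(D-3H))\ne 0$ — so the weakly Ulrich condition $(i)$ of Definition \ref{Wul} fails at $i=1$, $t=-3$ — while cases $(iii),(iv),(v)$ have $h^1(\mathcal O_X(D-H))\ne 0$, $h^1(\mathcal O_X(D-2H))\ne 0$, $h^1(\mathcal O_X(D-3H))=0$, so both conditions of Definition \ref{Wul} hold. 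This is precisely the content of the analysis in the proof of Theorem \ref{T35} and of Lemma \ref{2awaynecessary}, so no new work is needed beyond citing it.
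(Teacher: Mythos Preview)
Your proposal is correct and follows essentially the same route as the paper: split into the ACM, $1$-away, and $2$-away cases, then invoke Theorems \ref{Ton}, \ref{T31}, and \ref{T35} respectively, noting (as in the remark after Proposition \ref{P37}) which $2$-away cases have their nonvanishing $H^1$ concentrated at twists $-1,-2$. Two small slips to fix in the write-up: for the ACM cases Riemann--Roch gives $h^0(\mathcal O_X(D)) = D.H$ (not $D.H-1$), and in the $2$-away cases $(4,6),(3,5)$ it is condition (ii) of Definition \ref{Wul} (namely $H^1(E(-t))=0$ for $t\ge 3$, here $t=3$) that fails, not condition (i).
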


\begin{proof}
Let $D$ be a non-zero effective divisor on a nonsingular cubic hypersurface $X \subset \mathbb P^3$. Assume that  $\mathcal O_X(D)$ is initialized  and weakly Ulrich but not Ulrich, then from the definition for the surface case, we see that there are two possibilities:
\begin{itemize}
	\item either $h^1(\mathcal O_X(D-H)) \neq 0$ or  $h^1(\mathcal O_X(D-2H)) \neq 0$ (or both) in which case $\mathcal O_X(D)$ is either $1$ or $2$-away ACM. By Theorem \ref{T35}, we see that the  possibilities in the $2$-away ACM cases are  $(D^2,D.H)= (0,4)$ with $|2H-D| =\emptyset$ or $(D^2,D.H)= (-3,3)$ or  $(D^2,D.H)= (-1,3)$. By Theorem \ref{T31}, we see that the  possibilities in the $1$-away ACM cases are  $(D^2,D.H)= (-2,2)$ and $(D^2,D.H)= (2,4)$.\
	
	\item both $h^1(\mathcal O_X(D-H)) = h^1(\mathcal O_X(D-2H))= 0$ and $h^2(\mathcal O_X(D-2H)) \neq 0$. Note that in this case $\mathcal O_X(D)$ is ACM and therefore, by Theorem \ref{Ton}, $(3)$, the only two possibilities are $(D^2,D.H)= (0,2)$ and $(D^2,D.H)= (-1,1)$ and we are done.
\end{itemize}
The converse assertion is straightforward.
\end{proof}

Note that one can also obtain an explicit list of such divisors from Propisitions \ref{P37}, \ref{P32} and [\cite{PLT}, Theorem $4.7$].\

\section{Examples of $\ell$-away ACM line bundles }\label{ex} In this section, our  aim is to prove the third main theorem of this paper i.e. to produce explicit examples of $\ell$-away ACM line bundles on a nonsingular cubic surface $X$, where $\ell>2$. Before proceeding for that theorem, in the following example, we use the $2$-away classification to extend our discussion on $\ell$-away ACM line bundles of degree $1,2$ (in subsection $\S$ \ref{12}) to degree $3$ divisors on $X$.  To be more precise, we determine the values of $\ell$ for which a non-zero effective divisor $D$ on $X$ with $D.H=3$ becomes $\ell$-away ACM.\

\begin{example}\label{ex0}\normalfont
Let $D$ be a non-zero effective divisor on $X$ with $D.H=3$. Then there are five possibilities:
\begin{itemize}
	\item $D$ is a reduced and irreducible divisor on $X$ with $D.H=3$. It is easy to see that $D^2=1$. Therefore, by  Theorem \ref{Ton}, $(3)$, $\mathcal O_X(D)$ is ACM.\
	
	\item $D=D_1+Q$, where $D_1$ is line on $X$ and $Q$ is a reduced and irreducible divisor on $X$ with $Q.H=2$. If $D_1.Q=0$, then $(D^2, D.H)=(-1,3)$ and hence by Theorem \ref{T35}, $(b)(v)$ $\mathcal O_X(D)$ is $2$-away ACM. If $D_1.Q=1$, then $D^2=1$ and hence by  Theorem \ref{Ton}, $(3)$, $\mathcal O_X(D)$ is ACM. If $D_1.Q=2$, then $(D^2, D.H)=(3,3)$. Then applying \ref{RR} to the line bundle $\mathcal O_X(D-H)$, we get $h^0(\mathcal O_X(D-H)) \neq 0$, whence it follows that $\mathcal O_X(D) \cong \mathcal O_X(H)$ and hence $\mathcal O_X(D)$ is ACM but not initialized.\
	
	\item $D=D_1+2D_2$, where $D_i$'s are lines on $X$ for $i=1,2$ with $D_1.D_2 \geq 0$. If $D_1.D_2=1$, then $(D^2, D.H)=(-1,3)$ and hence by Theorem \ref{T35}, $(b)(v)$ $\mathcal O_X(D)$ is $2$-away ACM.\
	
	 If $D_1.D_2=0$, then $(D^2, D.H)=(-5,3)$ and we proceed as follows: applying \ref{RR} to the line bundles $\mathcal O_X(D-2H), \mathcal O_X(D-H)$ and $\mathcal O_X(D)$, we get $h^1(\mathcal O_X(D-2H)) \neq 0, h^1(\mathcal O_X(D-H)) \neq 0$ and  $h^1(\mathcal O_X(D)) \neq 0$ respectively (Note that for the last case, one get by \ref{RR} that $h^1(\mathcal O_X(D)) =h^0(\mathcal O_X(D))$ and the later is not zero can be achieved using standard S.E.S's).  Next, we show that $h^1(\mathcal O_X(D+tH)) =0$ for $t \in \mathbb Z \setminus \{-2,-1,0\}$. We divide this into two parts.\
	
	 Since $3H-D$ is nef and big, by Kawamata-Viehweg vanishing, we get $h^1(\mathcal O_X(D-3H))=0$. The fact that $h^1(\mathcal O_X(D-tH))=0$ for all $t \geq 3$ follows inductively by looking at the piece of the cohomology sequence,
	\begin{align*}
		\cdots \to H^0(\mathcal O_H(D-tH)) \to H^1(\mathcal O_X(D-(t+1)H)) \to H^1(\mathcal O_X(D-tH)) \to \cdots
	\end{align*}
	
	For vanishings in the other direction, we proceed as follows: by \ref{RR} applied to $\mathcal O_X(D+H)$, we get $h^1(\mathcal O_X(D+H))+6=h^0(\mathcal O_X(D+H))$. Tensorizing the S.E.S \ref{ss} (with $\Gamma_1$ replaced by $D_2$) by $\mathcal O_X(D+H)$ and taking cohomology, we get, $h^0(\mathcal O_X(D+H))=h^0(\mathcal O_X(D_1+D_2+H))$. Tensorizing the S.E.S \ref{ss} (with $\Gamma_1$ replaced by $D_1$) by $\mathcal O_X(D_1+D_2+H)$ and taking cohomology, we get, $h^0(\mathcal O_X(D_1+D_2+H)) \leq h^0(\mathcal O_X(D_2+H))+1$. Tensorizing the S.E.S \ref{ss} (with $\Gamma_1$ replaced by $H$) by $\mathcal O_X(D_2+H)$ and taking cohomology, we get, $h^0(\mathcal O_X(D_2+H))=5$. This forces $h^0(\mathcal O_X(D+H)) \leq 6$ and therefore, $h^1(\mathcal O_X(D+H))=0$. It is easy to see that $\mathcal O_X(D+2H)$ is $0$-regular and hence by Theorem \ref{CM}, $h^1(\mathcal O_X(D+tH))=0$ for all $t \geq 1$. This shows $\mathcal O_X(D)$ is $3$-away ACM. \

	\item $D=D_1+D_2+D_3$, where $D_i$'s are lines on $X$ for $i=1,2,3$ with $D_i.D_j \geq 0$ for $i \neq j \in \{1,2,3\}$.  If $D_i.D_j = 0$ for $i \neq j \in \{1,2,3\}$,  then $(D^2, D.H)=(-3,3)$ and hence by Theorem \ref{T35}, $(b)(iv)$ $\mathcal O_X(D)$ is $2$-away ACM. If without loss of generality, $D_1.D_2=1$ and other two intersection numbers are $0$, then $(D^2, D.H)=(-1,3)$ and hence by Theorem \ref{T35}, $(b)(v)$ $\mathcal O_X(D)$ is $2$-away ACM. If without loss of generality, $D_1.D_2=1, D_1.D_3=1$ and $D_2.D_3=0$, then $D^2=1$ and hence by Theorem \ref{Ton}, $(3)$, $\mathcal O_X(D)$ is ACM.  If $D_i.D_j =1$ for $i \neq j \in \{1,2,3\}$, then $(D^2, D.H)=(3,3)$. Then applying \ref{RR} to the line bundle $\mathcal O_X(D-H)$, we get $h^0(\mathcal O_X(D-H)) \neq 0$, whence it follows that $\mathcal O_X(D) \cong \mathcal O_X(H)$ and hence $\mathcal O_X(D)$ is ACM  but not initialized.\
	
	\item $D=3D_1$, where $D_1$ is a  line on $X$. Applying \ref{RR} to the line bundles $\mathcal O_X(D-3H), \mathcal O_X(D-2H), \mathcal O_X(D-H)$ and $\mathcal O_X(D)$, we get $h^1(\mathcal O_X(D-3H)) \neq 0, h^1(\mathcal O_X(D-2H)) \neq 0, h^1(\mathcal O_X(D-H)) \neq 0$ and $h^1(\mathcal O_X(D)) \neq 0$ respectively. Next, we see that by \ref{RR}, $\chi(\mathcal O_X(D+H) = 4$. Applying cohomology to the following S.E.S
	\begin{align*} 
	0 \rightarrow \mathcal{O}_X(2D_1+H) \rightarrow \mathcal{O}_X(3D_1+H) \rightarrow \mathcal{O}_{D_1}(3D_1+H) \rightarrow 0,
	\end{align*}
	 we get $H^0(\mathcal{O}_X(2D_1+H)) \cong H^0(\mathcal{O}_X(3D_1+H))$. Using similar S.E.S, we get $H^0(\mathcal{O}_X(D_1+H)) \cong H^0(\mathcal{O}_X(2D_1+H))$. Since by \ref{RR}, $\chi(\mathcal O_X(D_1+H))=5$, we get $h^0(\mathcal O_X(D_1+H)) \ge 5$. Hence, $h^1(\mathcal O_X(D+H)) \ge 1$. Next, we  show that $h^1(\mathcal O_X(D+tH)) =0$ for $t \in \mathbb Z \setminus \{-3,-2,-1,0,1\}$. We divide this into two parts.\
	 
	 Since $H-D_1$ is nef, this implies $4H-D=4H-3D_1=H+(3H-3D_1)$ is nef. Since $4H-D$ is also big, by Kawamata-Viehweg vanishing, we get $h^1(\mathcal O_X(D-4H))=0$. The fact that $h^1(\mathcal O_X(D-tH))=0$ for all $t \geq 4$ follows inductively by looking at the piece of the cohomology sequence,
	 \begin{align*}
	 \cdots \to H^0(\mathcal O_H(D-tH)) \to H^1(\mathcal O_X(D-(t+1)H)) \to H^1(\mathcal O_X(D-tH)) \to \cdots
	 \end{align*}
	 
	 Next, note that $3D_1+3H$ is nef and big. Therefore, by Kawamata-Viehweg vanishing, we have $H^1(\mathcal O_X(D+2H))=0$. It is easy to see that $\mathcal O_X(D+3H)$ is $0$-regular and hence by Theorem \ref{CM}, $h^1(\mathcal O_X(D+tH))=0$ for all $t \geq 2$. This shows $\mathcal O_X(D)$ is $5$-away ACM.\\
	 \end{itemize}	
\end{example}

We now move on to prove the third main theorem of this paper i.e. the examples of initialized and $\ell$-away ACM line bundles on a nonsingular cubic surface $X$ for all natural number $\ell$ but the odd multiples of $3$ i.e. for all $\ell \in \mathbb{N} \setminus 3(2\mathbb{N}+1)$. Recall that $X$ is isomorphic with $\mathbb{P}^2$ blown up at $6$  points $P_1, P_2, \cdots, P_6$ in general  position. Let $\pi:X \rightarrow \mathbb{P}^2$ be the blow up map. Then any divisor in $X$ can be written as $\alpha l + \sum_{i=1}^6\beta_ie_i$, where $\alpha, \beta_i$ are integers, and $l$ is the class of the pullback of a line in $\mathbb{P}^2$ and $e_i$ are $6$ exceptional divisors.\

\begin{theorem}\label{Ext}
	Let $a \ge 0$ be an integer. Then with the above notation, we will have the following:
	\begin{itemize}
		\item [(1)] The divisor $D=2l+a\sum_{i=1}^{6}e_{i}$ is $(6a+2)$-away ACM. The nonvanishing cohomologies are $H^{1}(X, \mathcal O_X(D-(5a+3)H)), \cdots, H^{1}(X, \mathcal O_X(D+(a-2)H))$. \\
		
		\item[(2)] The divisor $D=2l+(a+1)e_{1}+a\sum_{i=2}^{6}e_{i}$ is $(6a+4)$-away ACM. The nonvanishing cohomologies are $H^{1}(X, \mathcal O_X(D-(5a+4)H)), \cdots, H^{1}(X, \mathcal O_X(D+(a-1)H))$.\\

		\item [(3)] The divisor $D=2l+(a+1)e_{1}+(a+1)e_{2}+ a\sum_{i=3}^{6}e_{i}$ is $(6a+5)$-away ACM. The nonvanishing cohomologies are $H^{1}(X, \mathcal O_X(D-(5a+5)H)), \cdots, H^{1}(X, \mathcal O_X(D+(a-1)H))$.\\

		\item [(4)] The divisor $D=2l+(a+1)e_{1}+(a+1)e_{2}+(a+1)e_{3}+a\sum_{i=4}^{6}e_{i}$ is $(6a+6)$-away ACM. The nonvanishing cohomologies are $H^{1}(X, \mathcal O_X(D-(5a+6)H)), \cdots, H^{1}(X, \mathcal O_X(D+(a-1)H))$.  \\

		\item [(5)] The divisor $D=2l+(a+1)e_{1}+(a+1)e_{2}+(a+1)e_{3}+(a+1)e_{4}+a\sum_{i=5}^{6}e_{i}$ is $(6a+7)$-away ACM. The nonvanishing cohomologies are $H^{1}(X, \mathcal O_X(D-(5a+7)H)), \cdots, H^{1}(X, \mathcal O_X(D+(a-1)H))$. \\

		\item [(6)] The divisor $D=2l+(a+1)e_{1}+(a+1)e_{2}+(a+1)e_{3}+(a+1)e_{4}+(a+1)e_{5}+ae_{6}$ is $(6a+8)$-away ACM. The nonvanishing cohomologies are $H^{1}(X, \mathcal O_X(D-(5a+8)H)), \cdots, H^{1}(X, \mathcal O_X(D+(a-1)H))$. \\

	\end{itemize}
\end{theorem}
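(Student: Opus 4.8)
The plan is to treat all six families uniformly. Write $D = 2l + \sum_{i=1}^6 b_i e_i$ with the relevant choices of $b_i \in \{a, a+1\}$; in each case $D.H = 3\cdot 2 - \sum b_i = 6 - \sum b_i$ and $D^2 = 4 - \sum b_i^2$ can be read off from Theorem \ref{cubic}. First I would compute, using Riemann-Roch \eqref{RR}, the Euler characteristic $\chi(\mathcal O_X(D+tH)) = \frac{(D+tH)(D+tH-K_X)}{2}+1$ as an explicit quadratic polynomial in $t$, and locate the interval of integers $t$ on which $\chi(\mathcal O_X(D+tH)) < 0$. On that interval $h^1(\mathcal O_X(D+tH)) \geq -\chi(\mathcal O_X(D+tH)) > 0$ automatically (since $h^0,h^2\geq 0$... more precisely $h^1 = h^0 + h^2 - \chi \geq -\chi$), so these twists contribute to the nonvanishing. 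A direct computation should show that in family (1) this forces $h^1 \neq 0$ for $-(5a+3) \leq t \leq a-2$ (giving $6a+2$ values), and analogously shifted ranges in families (2)--(6); this is the source of the claimed count and the claimed list of nonvanishing cohomologies.

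Next I would establish the matching vanishing statements at the two ends of the range. At the right end: I would show that for the smallest twist $t_0$ exceeding the asserted range (e.g. $t_0 = a-1$ in case (1)), $\mathcal O_X(D + t_0 H)$ is $0$-regular, or more directly that $h^1(\mathcal O_X(D+t_0H))=0$; then invoke Lemma \ref{general}(2) — noting $\deg\big((D+t_0H)|_H\big) \geq 0$ — to propagate the vanishing $h^1(\mathcal O_X(D+tH))=0$ to all $t \geq t_0$. For this base case I expect that $D + t_0H$ is nef and big (checkable via Theorem \ref{N}, since $D+t_0H = (D + (t_0+1)H) - H$ and one compares against the line classes $e_i$, $l-e_i-e_j$, $l - \sum_{k\ne j}e_k$), so Kawamata--Viehweg gives $h^1(\mathcal O_X(D+t_0H - H)) = 0$, which is even a little stronger than needed. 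At the left end, by Serre duality $h^1(\mathcal O_X(D+tH)) = h^1(\mathcal O_X(-D - tH + K_X)) = h^1(\mathcal O_X((-t-1)H - D))$, and for the largest twist $t_1$ below the asserted range I would show $h^1(\mathcal O_X((-t_1-1)H - D)) = 0$ — again by exhibiting $(-t_1-1)H - D$ or $(-t_1)H-D$ as nef and big via Theorem \ref{N} and applying Kawamata--Viehweg, or equivalently by showing the relevant complete linear system is empty and using Riemann-Roch. Then Lemma \ref{general}(1) (with $\deg((\cdot)|_H) < 0$) propagates this vanishing to all twists further to the left.

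The remaining point is to confirm that the nonvanishing range computed from $\chi < 0$ is exactly the full set of nonvanishing twists — i.e. that at the two boundary twists of the range $\chi$ is indeed negative while just outside it the cohomology genuinely vanishes (not merely that $\chi \geq 0$). The vanishing just outside is handled by the previous paragraph; for the boundary twists themselves I would check directly from the quadratic that $\chi(\mathcal O_X(D + tH)) < 0$ at $t = -(5a+3)$ and $t = a-2$ in case (1), and similarly in the other five cases — these are finite, routine polynomial evaluations. I also need to verify $\mathcal O_X(D)$ is initialized, i.e. $h^0(\mathcal O_X(D-H)) = 0$: since $D - H = -l + \sum(b_i - 1)e_i$ has negative degree as a curve (or: $|D-H|$ would contain no effective divisor because its class pairs negatively with $H$ once $D.H < 3$, and one checks $D.H < 3$ in all six cases when $a\geq 1$; for $a=0$ the small cases are handled separately, e.g. $D = 2l$ is the Ulrich-type divisor already appearing in Proposition \ref{P37}), this is immediate. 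The main obstacle I anticipate is purely bookkeeping: getting the arithmetic of the six quadratics and their shifted nonvanishing intervals exactly right, and making the nef-and-big verifications via Theorem \ref{N} cleanly for each family rather than case-by-case; the structural argument (Riemann--Roch for the nonvanishing, Lemma \ref{general} plus Kawamata--Viehweg for the two-sided vanishing) is the same in all six cases.
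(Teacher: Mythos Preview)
Your overall architecture (Riemann--Roch for the nonvanishing interval, Kawamata--Viehweg plus Lemma \ref{general} for the two-sided vanishing) matches the paper's, but there is a genuine gap at the boundary nonvanishing step. The claim that $\chi(\mathcal O_X(D+tH))<0$ on the \emph{entire} asserted range is false at the two endpoints once $a$ is not tiny. For example, in case (1) one computes at $t=a-2$ that $\chi(\mathcal O_X(D+(a-2)H))=\tfrac{9a^{2}-15a-6}{2}$, which is $\geq 0$ for all $a\geq 2$; by Serre duality the same value occurs at the left endpoint $t=-(5a+3)$. (In cases (2)--(6) the right endpoint already has $\chi\geq 0$ for $a\geq 1$.) So Riemann--Roch alone cannot force $h^{1}\neq 0$ there; at these twists both $h^{0}$ and $h^{2}$ are large and nearly cancel.

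The paper deals with the two boundary nonvanishings by separate, sharper arguments that you are missing. For the left endpoint it produces the line $L=2l-\sum_{i=1}^{5}e_{i}$ and checks $((5a+2)H-D)\cdot L=-2$; restricting to $L$ gives $h^{0}((5a+2)H-D)=h^{0}((5a+2)H-D-L)$ while $\chi$ jumps by $+1$, which (after verifying the relevant $h^{2}$ vanish) forces $h^{1}\geq 1$, hence $h^{1}(\mathcal O_X(D-(5a+3)H))\neq 0$ by Serre duality. For the right endpoint it computes $h^{0}(\mathcal O_X(D+(a-2)H))$ exactly, by successively peeling off the $e_{i}$ (each restricts to a negative line bundle on $e_{i}\cong\mathbb P^{1}$) until one is left with $\pi^{*}\mathcal O_{\mathbb P^{2}}(3a-4)$; this yields $h^{0}=\binom{3a-2}{2}$, which exceeds $\chi$ by $6$. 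Your $\chi<0$ idea does cover the interior twists, but Lemma \ref{general} already makes that step automatic once the two endpoints are known to be nonzero, so the real content is exactly the part your proposal does not supply.

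A smaller slip: for the right-side vanishing you propose to show $D+t_{0}H$ (with $t_{0}=a-1$) is nef and big so that Kawamata--Viehweg yields $h^{1}(\mathcal O_X(D+(t_{0}-1)H))=0$; but $D+(t_{0}-1)H=D+(a-2)H$ is precisely the twist you need to be \emph{nonzero}. The correct move is one step further: $D+aH=(3a+2)l$ is nef and big, so KV gives $h^{1}(\mathcal O_X(D+(a-1)H))=0$. On the left side, the paper does exactly what you suggest, checking via Theorem \ref{N} that $(5a+4)H-D$ is nef and big.
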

\begin{proof}
	The line bundles in all the six cases are initialized can be seen in the following way. Since all these divisors are effective, we only need to show $H^0(X, \mathcal O_X(D-H))=0$. In all these examples the divisor $D-H$ will be of the form $-l + \sum_{i=1}^{6}a_{i}e_{i}$ for $a_{i} \ge 0$.\\
	
	\textbf{\underline{Case1}:} Let us assume $a_{i} =0$ for all $i$. We have $H^{0}(X, \mathcal O_X(D-H)) = H^{0}(X, \mathcal O_X(-l)) = H^0(X, \pi^*\mathcal{O}_{\mathbb{P}^2}(-1))$. Using projection formula for the blow up map $\pi:X \rightarrow \mathbb{P}^{2}$, we have $R^{i}\pi_{*}(\pi^*\mathcal{O}_{\mathbb{P}^2}(-1)) = \mathcal{O}_{\mathbb{P}^2}(-1) \otimes R^{i}\pi_{*}\mathcal O_X =0$ for all $i >0$. Thus,  $H^0(X, \pi^*\mathcal{O}_{\mathbb{P}^2}(-1))= H^0(X, \pi_*\pi^*\mathcal{O}_{\mathbb{P}^2}(-1))=H^{0}(\mathbb{P}^2, \mathcal{O}_{\mathbb{P}^{2}}(-1)) =0$. \\
	
	\textbf{\underline{Inductive argument}:} Let us assume $a_{i} >0$ for some $i$. We consider the following S.E.S
	\begin{equation}
		\label{EqRHS}
		0 \rightarrow \mathcal{O}_{X}(D-H-e_i) \rightarrow \mathcal{O}_{X}(D-H) \rightarrow \mathcal{O}_{\mathbb{P}^{1}}(-a_{i}) \rightarrow 0
	\end{equation}
	Since $a_i>0$, we will have $H^0(\mathbb{P}^1, \mathcal{O}_{\mathbb{P}^{1}}(-a_{i}))=0$. Taking the long exact sequence for the above S.E.S we get
	\begin{equation}
		\label{EqRHS2}
		H^{0}(\mathcal O_X(D-H)) = H^0(X, \mathcal{O}_{X}(D-H-e_i))=H^{0}(\mathcal{O}_{X}(-l + (a_{i}-1)e_{i}+\sum_{j \ne i}a_{j}e_{j}))
	\end{equation}
	Thus, using the above, we can reduce the values of $a_i$ to $0$. Thus, without loss of generality, we can assume that $a_{i}=0$ for all $i$. Hence, the initializedness of the divisors follows from the case $1$.\\
	
	\textbf{\underline{The last nonvanishing cohomology on LHS}}: Let $D$ be a divisor on $X$ such that there exists a line $L$ on $X$ with $D \cdot L \le -2$. We consider the following S.E.S
	$$0 \rightarrow \mathcal{O}_X(D-L) \rightarrow \mathcal{O}_X(D) \rightarrow \mathcal{O}_L(D) \rightarrow 0$$
	Since $D \cdot L <0$, we will have $H^0(L, \mathcal{O}_L(D))=0$. This implies $H^0(X, \mathcal O_X(D-L)) \cong H^0(X, \mathcal O_X(D))$.\\
	
	
	 Note that $$\chi(\mathcal O_X(D-L))=\frac{(D-L)(D-L+H)}{2}+1=\frac{D(D+H)}{2}+1-(D \cdot L +1)=\chi(\mathcal O_X(D))-(D \cdot L +1).$$
	Since $D \cdot L \le -2$, we will have  
	\begin{equation}
		\label{LHSeq}
		\chi(\mathcal O_X(D-L)) \ge \chi(\mathcal O_X(D)) +1
	\end{equation}
	
	We will assume $H^2(X, \mathcal O_X(D-L))=0$, and $H^2(X, \mathcal O_X(D))=0$. Since we already have $H^0(X, \mathcal O_X(D-L)) \cong H^0(X, \mathcal O_X(D))$, from \eqref{LHSeq} we get $h^1(X, \mathcal O_X(D)) \ge 1$.\\
	
	We will check the following three assumptions 
	\begin{itemize}
		\item[(1)] $D \cdot L \le -2$
		\item[(2)]$H^2(\mathcal O_X(D))=0$
		\item[(3)]$H^2(\mathcal O_X(D-L))=0$
	\end{itemize}
	for the divisor on extreme LHS in each of the six cases to prove the nonvanishing of the cohomology.
	\begin{itemize}
		\item[(1)] $(5a+2)H-D=(15a+4)l-(6a+2)\sum_{i=1}^{6}e_i$. For the line $L=2l-\sum_{i=1}^{5}e_i$, we have $((5a+2)H-D) \cdot L=2(15a+4)-5(6a+2)=-2$.\\
		
		Also $H^2(X, \mathcal O_X((5a+2)H-D))=H^0(X, \mathcal O_X(D-(5a+3)H))=0$ since by initializedness, we have $H^0(X, \mathcal O_X(D-H))=0$. Also $H^2(X, \mathcal O_X((5a+2)H-D-L))=H^0(X, \mathcal O_X(D+L-(5a+3)H))=0$ as $ (D+L-(5a+3)H) \cdot H =2\cdot 3 + 6a+1-3(5a+3) <0$.\\
		
		Thus, we will have $h^1(X, \mathcal O_X((5a+2)H-D)) \ge 1$. By Serre duality, we have $h^1(X, \mathcal O_X(D-(5a+3)H))= h^1(X, \mathcal O_X((5a+2)H-D)) \ge 1$. \\
		
		The divisor $(5a+4)H-D(=2l+a\sum_{i=1}^{6}e_{i}) = (15a+10)H - (6a+4)\sum_{i=1}^{6}e_{i}$ can be checked to be nef by Theorem \eqref{N}. Since $((5a+4)H-D) \cdot ((5a+4)H-D) >0$, the divisor $(5a+4)H-D$ is big. Hence, by Kawamata-Viehweg vanishing, we get $H^{1}(X, \mathcal O_X(D-(5a+4)H))=0$.\\
		
		\item[(2)] $(5a+3)H-D=(15a+7)l-(6a+4)e_{1}-(6a+3)\sum_{i=2}^{6}e_{i}$. Thus, $((5a+3)H-D) \cdot L(=2l-\sum_{i=1}^{5}e_{i})=-2$. \\
		
		Also $D-(5a+4)H$ and $(D+L-(5a+4)H)$ intersects with $H$ negatively. Thus, $H^2(X, \mathcal O_X((5a+3)H-D))=H^0(X, \mathcal O_X(D-(5a+4)H))=0$ and  $H^2(X, \mathcal O_X((5a+3)H-D-L))=H^0(X, \mathcal O_X(D+L-(5a+4)H))=0$.\\
		
		Hence, $h^1(X, \mathcal O_X((5a+3)H-D)) \ge 1$. By Serre duality, we have $h^1(X, \mathcal O_X(D-(5a+4)H)) \ge 1$.\\
		
		The divisor $(5a+5)H-D$ can be checked to be both nef and big by similar arguments as in $(1)$. Thus by Kawamata-Viehweg vanishing, $H^{1}(X, \mathcal O_X(D-(5a+5)H))=0$.\\
		
		\item[(3)] $(5a+4)H-D=(15a+10)l-(6a+5)e_{1}-(6a+5)e_{2}-(6a+4)\sum_{i=3}^{6}e_{i}$. Thus, $((5a+4)H-D) \cdot L(=2l-\sum_{i=1}^{5}e_{i})=-2$. \\
		
		Also $D-(5a+5)H$ and $(D+L-(5a+5)H)$ intersects with $H$ negatively. Thus, $H^2(X, \mathcal O_X((5a+4)H-D))=H^0(X, \mathcal O_X(D-(5a+5)H))=0$ and  $H^2(X, \mathcal O_X((5a+4)H-D-L))=H^0(X, \mathcal O_X(D+L-(5a+5)H))=0$. Hence, by Serre duality $h^1(X, \mathcal O_X(D-(5a+5)H)) \ge 1$. \\
		
		The divisor $(5a+6)H-D$ is nef and big. Thus, by Kawamata-Viehweg vanishing, $H^{1}(X, \mathcal O_X(D-(5a+6)H))=0$.\\
		
		\item[(4)] $(5a+5)H-D=(15a+13)l-(6a+6)e_{1}-(6a+6)e_{2}-(6a+6)e_{3}-(6a+5)\sum_{i=4}^{6}e_{i}$. Thus, $((5a+5)H-D) \cdot L(=2l-\sum_{i=1}^{5}e_{i})=-2$. \\
		
		$D-(5a+6)H$ and $(D+L-(5a+6)H)$ intersects with $H$ negatively. Thus, $H^2(\mathcal O_X((5a+5)H-D))=H^0(\mathcal O_X(D-(5a+6)H))=0$ and $H^2(\mathcal O_X((5a+5)H-D-L))=H^0(\mathcal O_X(D+L-(5a+6)H))=0$. Hence, by Serre duality, $h^1(X, \mathcal O_X(D-(5a+6)H)) \ge 1$.\\
		
		The divisor $(5a+7)H-D$ is nef and big. Thus, by Kawamata-Viehweg vanishing, $H^{1}(\mathcal O_X(D-(5a+7)H))=0$.\\
		
		\item[(5)] $(5a+6)H-D=(15a+16)l-(6a+7)e_{1}-(6a+7)e_{2}-(6a+7)e_{3}-(6a+7)e_{4}-(6a+6)\sum_{i=5}^{6}e_{i}$. Thus, $((5a+6)H-D) \cdot L(=2l-\sum_{i=1}^{5}e_{i})=-2$. \\
		
		 $D-(5a+7)H$ and $(D+L-(5a+7)H)$ intersects with $H$ negatively. Thus, $H^2(\mathcal O_X((5a+6)H-D))=H^0(\mathcal O_X(D-(5a+7)H))=0$ and $H^2(\mathcal O_X((5a+6)H-D-L))=H^0(\mathcal O_X(D+L-(5a+7)H))=0$. Hence, by Serre duality, $h^1(X, \mathcal O_X(D-(5a+7)H)) \ge 1$.\\
		
		The divisor $(5a+8)H-D$ is nef and big. Thus, by Kawamata-Viehweg vanishing, $H^{1}(\mathcal O_X(D-(5a+8)H))=0$.\\
		
		\item[(6)] $(5a+7)H-D=(15a+19)l-(6a+8)e_{1}-(6a+8)e_{2}-(6a+8)e_{3}-(6a+8)e_{4}-(6a+8)e_{5}-(6a+7)e_{6}$. Thus, $((5a+7)H-D) \cdot L(=2l-\sum_{i=1}^{5}e_{i})=-2$. \\
		
		 $D-(5a+8)H$ and $(D+L-(5a+8)H)$ intersects with $H$ negatively. Thus, $H^2(\mathcal O_X((5a+7)H-D))=H^0(\mathcal O_X(D-(5a+8)H))=0$ and $H^2(\mathcal O_X((5a+7)H-D-L))=H^0(\mathcal O_X(D+L-(5a+8)H))=0$. Hence, by Serre duality, $h^1(X, \mathcal O_X(D-(5a+8)H)) \ge 1$.\\
		
		The divisor $(5a+9)H-D$ is nef and big. Thus, by Kawamata-Viehweg vanishing, $H^{1}(\mathcal O_X(D-(5a+9)H))=0$.\
	\end{itemize}

	\textbf{\underline{The last nonvanishing on the RHS}}: Let $D'$ be the divisor $D+(a-2)H$ in $(1)$ and $D+(a-1)H$ for $(2)$, $(3)$, $\cdots$, $(6)$. In each of these cases we will compute $H^{0}(X, \mathcal O_X(D'))$ and show that $h^{0}(X, \mathcal O_X(D')) > \chi(\mathcal O_X(D'))$. Thus, we will have $h^{1}(X, \mathcal O_X(D')) \ne 0$. Further, we will show that $h^{0}(X, \mathcal O_X(D'+H)) = \chi(\mathcal O_X(D'+H))$ and $H^{2}(X, \mathcal O_X(D'+H))=0$. This will imply $H^{1}(X, \mathcal O_X(D'+H)) = 0$. We will check these computations in each of the six cases.\\
	
	\item [(1)] $D+(a-2)H=(3(a-2)+2)l + 2\sum_{i=1}^{6}e_{i}=(3a-4) l + 2\sum_{i=1}^{6}e_{i}$. Using the arguments used in the \textbf{\underline{Inductive argument}} \eqref{EqRHS} and \eqref{EqRHS2}, it can proved that $H^{0}(X, \mathcal O_X(D+(a-2)H))=H^{0}(X, \pi^*\mathcal{O}_{\mathbb{P}^2}(3a-4))$. By projection formula, we have $R^i\pi_*\pi^*\mathcal{O}_{\mathbb{P}^2}(3a-4)=\mathcal{O}_{\mathbb{P}^2}(3a-4) \otimes R^i \pi_* \mathcal{O}_X=0$ for all $i>0$. Thus, we have $H^{0}(X, \pi^*\mathcal{O}_{\mathbb{P}^2}(3a-4))=H^{0}(X, \pi_*\pi^*\mathcal{O}_{\mathbb{P}^2}(3a-4))=H^{0}(\mathbb{P}^{2}, \mathcal{O}_{\mathbb{P}^{2}}(3a-4))$. The dimension of $H^{0}(\mathbb{P}^{2}, \mathcal{O}_{\mathbb{P}^{2}}(3a-4))$ is $\left( \begin{array}{c} 3a-4+2 \\ 2 \end{array} \right) =\frac{(3a-2)(3a-3)}{2}=\frac{9a^2-15a+6}{2}$.\\
	
	On the other hand, $\chi(\mathcal O_X(D+(a-2)H))=\frac{((3a-4) l + 2\sum_{i=1}^{6}e_{i})((3a-1) l + \sum_{i=1}^{6}e_{i})}{2}+1=\frac{9a^2-15a-6}{2}$. This implies that $h^1(\mathcal O_X(D+(a-2)H)) \ge 6$.\\
	
	$D+(a-1)H=(3a-1) l + \sum_{i=1}^{6}e_{i}$. Thus using S.E.S's similar to \eqref{EqRHS} and \eqref{EqRHS2}, we get  $h^{0}(X, \mathcal O_X(D+(a-1)H))=h^{0}(\mathbb{P}^{2}, \mathcal{O}_{\mathbb{P}^{2}}(3a-1))=\left( \begin{array}{c} 3a-1+2 \\ 2 \end{array} \right) =\frac{(3a+1)(3a)}{2}$.\\
	
	On the other hand, $\chi(\mathcal O_X(D+(a-1)H))=\frac{((3a-1) l + \sum_{i=1}^{6}e_{i})((3a+2) l )}{2}+1=\frac{(3a-1)(3a+2)+2}{2}=\frac{9a^2+3a}{2}=\frac{3a(3a+1)}{2}$. Also $H^2(X, \mathcal O_X(D+(a-1)H))=H^0(X, \mathcal O_X(-D-aH))=0$ as $-D-aH$ intersects $H$ negatively. This implies $H^1(X, \mathcal O_X(D+(a-1)H))=0$.\\
	
	\item [(2)]  $D+(a-1)H=(3(a-1)+2)l + 2e_{1}+\sum_{i=2}^{6}e_{i}=(3a-1) l + 2e_{1}+\sum_{i=2}^{6}e_{i}$. Thus, we get $h^{0}(X, \mathcal O_X(D+(a-1)H))=\text{dim}H^{0}(\mathbb{P}^{2}, \mathcal{O}_{\mathbb{P}^{2}}(3a-1))=\left( \begin{array}{c} 3a-1+2 \\ 2 \end{array} \right) =\frac{(3a+1)(3a)}{2}=\frac{9a^2+3a}{2}$.\\
	
	On the other hand, $\chi(\mathcal O_X(D+(a-1)H))=\frac{((3a-1) l + 2e_{1}+\sum_{i=2}^{6}e_{i})((3a+2) l + e_{1}}{2}+1=\frac{(3a-1)(3a+2)-2}{2}+1$. This implies $h^1(X, \mathcal O_X(D+(a-1)H)) \ge 1$.\\
	
	$D+aH=(3a+2)l + e_{1}$. Thus, we get $h^{0}(X, \mathcal O_X(D+aH))=h^{0}(\mathbb{P}^{2}, \mathcal{O}_{\mathbb{P}^{2}}(3a+2))=\left( \begin{array}{c} 3a+2+2 \\ 2 \end{array} \right) =\frac{(3a+4)(3a+3)}{2}$.\\
	
	On the other hand, $\chi(\mathcal O_X(D+aH))=\frac{((3a+2) l + e_{1})((3a+5) l -\sum_{i=2}^{6}e_{i})}{2}+1=\frac{(3a+5)(3a+2)}{2}+1=\frac{9a^2+21a+12}{2}$. Also $H^2(X, \mathcal O_X(D+aH))=H^0(X, \mathcal O_X(-D-(a+1)H))=0$. This implies $H^1(X, \mathcal O_X(D+aH))=0$.\\
	
	\item [(3)] Let $D=2l+(a+1)e_{1}+(a+1)e_{2}+\sum_{i=3}^{6}e_{i}$. Then $D+(a-1)H=(3a-1) l + 2e_{1}+2e_{2}+\sum_{i=3}^{6}e_{i}$. Thus, we get $h^{0}(X, \mathcal O_X(D+(a-1)H))=h^{0}(\mathbb{P}^{2}, \mathcal{O}_{\mathbb{P}^{2}}(3a-1))=\left( \begin{array}{c} 3a-1+2 \\ 2 \end{array} \right) =\frac{(3a+1)(3a)}{2}=\frac{9a^2+3a}{2}$.\\
	
	On the other hand, $\chi(\mathcal O_X(D+(a-1)H))=\frac{((3a-1) l + 2e_{1}+2e_{2}+\sum_{i=3}^{6}e_{i})((3a+2) l + e_{1}+e_{2}}{2}+1=\frac{(3a-1)(3a+2)-2-2}{2}+1=\frac{9a^{2}+3a-4}{2}$. This implies $h^1(X, \mathcal O_X(D+(a-1)H)) \ge 2$.\\
	
	$D+aH=(3a+2)l + e_{1}+e_{2}$. Thus, we get $h^{0}(X, \mathcal O_X(D+aH))=h^{0}(\mathbb{P}^{2}, \mathcal{O}_{\mathbb{P}^{2}}(3a+2))=\left( \begin{array}{c} 3a+2+2 \\ 2 \end{array} \right) =\frac{(3a+4)(3a+3)}{2}$.\\
	
	On the other hand, $\chi(\mathcal O_X(D+aH))=\frac{((3a+2) l + e_{1}+e_{2})((3a+5) l -\sum_{i=3}^{6}e_{i})}{2}+1=\frac{(3a+5)(3a+2)}{2}+1=\frac{9a^2+21a+12}{2}$. We have $H^2(\mathcal O_X(D+aH))=H^0(\mathcal O_X(-D-(a+1)H))=0$. This implies $H^1(\mathcal O_X(D+aH))=0$.\\
	
	\item [(4)] Let $D=2l+(a+1)e_{1}+(a+1)e_{2}+(a+1)e_{3}+a\sum_{i=4}^{6}e_{i}$. Then  $D+(a-1)H=(3a-1) l + 2e_{1}+2e_{2}+2e_{3}+\sum_{i=4}^{6}e_{i}$. Thus, we have $h^{0}(X, \mathcal O_X(D+(a-1)H))=h^{0}(\mathbb{P}^{2}, \mathcal{O}_{\mathbb{P}^{2}}(3a-1))=\left( \begin{array}{c} 3a-1+2 \\ 2 \end{array} \right) =\frac{(3a+1)(3a)}{2}=\frac{9a^2+3a}{2}$.\\
	
	On the other hand, $\chi(\mathcal O_X(D+(a-1)H))=\frac{((3a-1) l + 2e_{1}+2e_{2}+2e_{3}+\sum_{i=4}^{6}e_{i})((3a+2) l + e_{1}+e_{2}+e_{3})}{2}+1=\frac{(3a-1)(3a+2)-2-2-2}{2}+1=\frac{9a^{2}+3a-6}{2}$. This implies that $h^1(X, \mathcal O_X(D+(a-1)H)) \ge 3$.\\
	
	$D+aH=(3a+2)l + e_{1}+e_{2}$. Thus, we have $h^{0}(X, \mathcal O_X(D+aH))=h^{0}(\mathbb{P}^{2}, \mathcal{O}_{\mathbb{P}^{2}}(3a+2))=\left( \begin{array}{c} 3a+2+2 \\ 2 \end{array} \right) =\frac{(3a+4)(3a+3)}{2}$.\\
	
	On the other hand, $\chi(\mathcal O_X(D+aH))=\frac{((3a+2) l + e_{1}+e_{2})((3a+5) l -\sum_{i=3}^{6}e_{i})}{2}+1=\frac{(3a+5)(3a+2)}{2}+1=\frac{9a^2+21a+12}{2}$. We also have $H^2(\mathcal O_X(D+aH))=H^0(\mathcal O_X(-D-(a+1)H))=0$. This implies $H^1(\mathcal O_X(D+aH))=0$.\\
	
	\item [(5)] Let $D=2l+(a+1)e_{1}+(a+1)e_{2}+(a+1)e_{3}+(a+1)e_{4}+a\sum_{i=5}^{6}e_{i}$. Then $D+(a-1)H=(3a-1) l + 2e_{1}+2e_{2}+2e_{3}+2e_{4}+\sum_{i=5}^{6}e_{i}$. Thus, $h^{0}(X, \mathcal O_X(D+(a-1)H))=h^{0}(\mathbb{P}^{2}, \mathcal{O}_{\mathbb{P}^{2}}(3a-1))=\left( \begin{array}{c} 3a-1+2 \\ 2 \end{array} \right) =\frac{(3a+1)(3a)}{2}=\frac{9a^2+3a}{2}$.\\
	
	On the other hand, $\chi(\mathcal O_X(D+(a-1)H))=\frac{((3a-1) l + 2e_{1}+2e_{2}+2e_{3}+2e_{4}+\sum_{i=5}^{6}e_{i})((3a+2) l + e_{1}+e_{2}+e_{3}+e_{4})}{2}+1=\frac{(3a-1)(3a+2)-2-2-2-2}{2}+1=\frac{9a^{2}+3a-8}{2}$. This implies $h^1(X, \mathcal O_X(D+(a-1)H)) \ge 4$.\\
	
	$D+aH=(3a+2)l + e_{1}+e_{2}+e_{3}+e_{4}$. Thus, $h^{0}(X, \mathcal O_X(D+aH))=h^{0}(\mathbb{P}^{2}, \mathcal{O}_{\mathbb{P}^{2}}(3a+2))=\left( \begin{array}{c} 3a+2+2 \\ 2 \end{array} \right) =\frac{(3a+4)(3a+3)}{2}$.\\
	
	On the other hand, $\chi(\mathcal O_X(D+aH))=\frac{((3a+2) l + e_{1}+e_{2}+e_{3}+e_{4})((3a+5) l -\sum_{i=5}^{6}e_{i})}{2}+1=\frac{(3a+5)(3a+2)}{2}+1=\frac{9a^2+21a+12}{2}$. Also we have  $H^2(X, \mathcal O_X(D+aH))=H^0(X, \mathcal O_X(-D-(a+1)H))=0$. This implies $H^1(X, \mathcal O_X(D+aH))=0$.\\
	
	\item [(6)] Let $D=2l+(a+1)e_{1}+(a+1)e_{2}+(a+1)e_{3}+(a+1)e_{4}+(a+1)e_{5}+ae_{6}$. Then $D+(a-1)H=(3a-1) l + 2e_{1}+2e_{2}+2e_{3}+2e_{4}+2e_{5}+e_{6}$. Thus, we have $h^{0}(X, \mathcal O_X(D+(a-1)H))=h^{0}(\mathbb{P}^{2}, \mathcal{O}_{\mathbb{P}^{2}}(3a-1))=\left( \begin{array}{c} 3a-1+2 \\ 2 \end{array} \right) =\frac{(3a+1)(3a)}{2}=\frac{9a^2+3a}{2}$.\\
	
	On the other hand, $\chi(\mathcal O_X(D+(a-1)H))=\frac{((3a-1) l + 2e_{1}+2e_{2}+2e_{3}+2e_{4}+2e_{5}+e_{6})((3a+2) l + e_{1}+e_{2}+e_{3}+e_{4}+e_{5})}{2}+1=\frac{(3a-1)(3a+2)-2-2-2-2-2}{2}+1=\frac{9a^{2}+3a-10}{2}$. This implies $h^1(\mathcal O_X(D+(a-1)H)) \ge 5$.\\
	
	$D+aH=(3a+2)l + e_{1}+e_{2}+e_{3}+e_{4}+e_{5}$. Thus, we have  $h^{0}(\mathcal O_X(D+aH))=h^{0}(\mathcal{O}_{\mathbb{P}^{2}}(3a+2))=\left( \begin{array}{c} 3a+2+2 \\ 2 \end{array} \right) =\frac{(3a+4)(3a+3)}{2}$.\\
	
	On the other hand, $\chi(\mathcal O_X(D+aH))=\frac{((3a+2) l + e_{1}+e_{2}+e_{3}+e_{4})((3a+5) l -e_{6})}{2}+1=\frac{(3a+5)(3a+2)}{2}+1=\frac{9a^2+21a+12}{2}$. Also $H^2(\mathcal O_X(D+aH))=H^0(\mathcal O_X(-D-(a+1)H))=0$. This implies $H^1(\mathcal O_X(D+aH))=0$.\\
	
	\textbf{\underline{Nonvanishing of the cohomology of the intermediate twists}:} This follows as a corollary of Lemma \eqref{general}. For each of the six cases of the Theorem \eqref{Ext}, we have identified  the last nonvanishing terms on both the left and the right hand side. Let us denote them as $H^1(\mathcal O_X(D+m_1H)) \ne 0$ and $H^1(\mathcal O_X(D+m_2H)) \ne 0$, where $m_1 < m_2$. Let $m$ be an integer such that $m_1 < m <m_2$. If $H^1(\mathcal O_X(D+mH))=0$, then by Lemma \eqref{general}, depending on whether $\text{deg}(\mathcal{O}_X(D+mH)|_{H}) < 0$ or $\ge 0$, we will have either $H^1(X, \mathcal O_X(D+mH-nH))=0$ or $H^1(X, \mathcal O_X(D+mH+nH))=0$ for all $n >0$. This will imply that  either $H^1(\mathcal O_X(D+m_1H)) = 0$ or $H^1(\mathcal O_X(D+m_2H)) = 0$, which is a contradiction.\

	

	

	


	
\end{proof}

\begin{remark}
	\normalfont Let $D$ be a non-zero effective divisor on a nonsingular cubic surface $X$ such that $D$ is big but not nef. Thus, $D \cdot L <0$ for a line $L$ in $X$. Further, assume that $\mathcal O_X(D)$ is a $\ell$-away ACM line bundle. Then can we determine the value $m$ such that $\mathcal O_X(D-L)$ is $m$-away ACM? Is it also possible to express $m$ as a function of $\ell$, $D^2$, and $D \cdot L$?
\end{remark}


\section{Existence of $\ell$-away ACM line bundles on higher  degree  hypersurfaces}\label{hyp}



In this final section, we prove the last main theorem of this paper i.e. the existence of smooth hypersurfaces of any degree $d \geq 3$ admitting $\ell$-away ACM line bundles.\

\begin{theorem}\label{T41}
Let $\ell \geq 1$ be an integer. Then for any $d>\ell$, there exists a smooth hypersurface $X^{(d)} \subset \mathbb P^3$ of degree $d$  admitting  initialized and $\ell$-away ACM line bundles.

\end{theorem}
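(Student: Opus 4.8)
The plan is to trade the statement for a question about Hartshorne--Rao modules of space curves. If $D$ is any curve on a smooth surface $X^{(d)}\subset\mathbb P^3$ of degree $d$, then $\mathcal I_{X^{(d)}}\cong\mathcal O_{\mathbb P^3}(-d)$ and $\mathcal I_D/\mathcal I_{X^{(d)}}\cong\mathcal O_{X^{(d)}}(-D)$, so for every $t$ there is a short exact sequence of $\mathcal O_{\mathbb P^3}$-modules
\begin{equation*}
0\longrightarrow\mathcal O_{\mathbb P^3}(t-d)\longrightarrow\mathcal I_D(t)\longrightarrow\mathcal O_{X^{(d)}}(tH-D)\longrightarrow 0 ,
\end{equation*}
and since $H^1(\mathbb P^3,\mathcal O_{\mathbb P^3}(t-d))=H^2(\mathbb P^3,\mathcal O_{\mathbb P^3}(t-d))=0$ we get $H^1\big(X^{(d)},\mathcal O_{X^{(d)}}(tH-D)\big)\cong H^1\big(\mathbb P^3,\mathcal I_D(t)\big)$ for all $t$. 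As $X^{(d)}$ is a surface, this says $\mathcal O_{X^{(d)}}(tH-D)$ is $\ell$-away aCM (for \emph{every} $t$), where $\ell=\#\{t'\in\mathbb Z:\,H^1(\mathbb P^3,\mathcal I_D(t'))\neq 0\}$ is the number of nonzero graded pieces of the Rao module of $D\subset\mathbb P^3$; moreover $h^0(\mathcal O_{X^{(d)}}(tH-D))\neq 0$ for $t\gg 0$ and $=0$ for $t\ll 0$, so twisting down we may fix $t=t_0$ with $\mathcal O_{X^{(d)}}(t_0H-D)$ in addition initialized. Hence it is enough to produce, for each $d>\ell\ge 1$, a smooth surface $X^{(d)}$ of degree $d$ carrying a curve $D$ whose Rao module is nonzero in exactly $\ell$ (necessarily consecutive, by Lemma~\ref{general}) degrees.

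For the construction I would take $D=L_1\cup\cdots\cup L_a$ a union of $a$ general pairwise disjoint lines, with the integer $a$ chosen so that $\binom{\ell+2}{3}/\ell<a\le\binom{\ell+3}{3}/(\ell+1)$; such an $a$ exists because the two bounds differ by $(\ell+2)/3\ge 1$. By the theorem of Hartshorne--Hirschowitz on lines in general position in $\mathbb P^3$, the maps $H^0(\mathbb P^3,\mathcal O_{\mathbb P^3}(t))\to H^0(\mathcal O_D(t))=k^{\,a(t+1)}$ have maximal rank for every $t$, so $H^1(\mathbb P^3,\mathcal I_D(t))\neq 0$ exactly when $a(t+1)>\binom{t+3}{3}$; since $a(t+1)-\binom{t+3}{3}$ is concave for $t\ge 0$ and positive at $t=0$ (as $a\ge 2$), the choice of $a$ forces this set to be precisely $\{0,1,\dots,\ell-1\}$, so $D$ has Rao module supported in $\ell$ degrees. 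Next, $D$ lies on a smooth surface of degree $d$: from maximal rank, $h^0(\mathbb P^3,\mathcal I_D(d))>0$ is equivalent to $a<\binom{d+3}{3}/(d+1)$, which holds because $a\le\binom{\ell+3}{3}/(\ell+1)<\binom{d+3}{3}/(d+1)$ once $d\ge\ell+1$; and the general surface of degree $d$ through $D$ is smooth away from $D$ by Bertini and smooth along $D$ because $\mathcal I_D$ is $(\ell+1)$-regular (its Rao module ending in degree $\ell-1$, $D$ being a nodal-free reduced curve of genus $1-a$), hence $\mathcal I_D(d)$ is globally generated as $d\ge\ell+1$, by Castelnuovo--Mumford regularity (Theorem~\ref{CM}). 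With such an $X^{(d)}$, the line bundle $\mathcal O_{X^{(d)}}(t_0H-D)$ is initialized and $\ell$-away aCM, its nonvanishing intermediate cohomology occurring on the consecutive block of twists $\{0,\dots,\ell-1\}$ shifted by $t_0$.

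The step I expect to be the real work is the \emph{exact} placement of the Rao module of $D$: one genuinely needs general position to invoke maximal rank, guaranteeing simultaneously that $H^1(\mathbb P^3,\mathcal I_D(t))$ vanishes for all $t\ge\ell$ and is nonzero for all $0\le t\le\ell-1$. The hypothesis $d>\ell$ enters twice here: it is exactly what makes $h^0(\mathbb P^3,\mathcal I_D(d))>0$, so that a degree-$d$ surface through $D$ exists at all, and it is exactly what makes $\mathcal I_D(d)$ globally generated (regularity $\ell+1$), so that the surface can be chosen smooth along $D$. One also has to observe that the scheme structure on $D$ induced by the divisor $D$ on the smooth surface $X^{(d)}$ coincides with the reduced union of the $L_i$, so that the displayed sequence is the correct one; this is automatic since $X^{(d)}$ is smooth and $D$ is reduced. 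Other choices of $D$ (general disjoint conics or rational normal cubics, or a curve with prescribed Rao module via Rao's theorem) would serve as well, but disjoint lines keep both the numerics and the general-position input cleanest.
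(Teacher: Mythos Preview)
Your argument is correct, and the overarching strategy coincides with the paper's: both reduce to finding a curve $D\subset\mathbb P^3$ whose Hartshorne--Rao module is nonzero in exactly $\ell$ degrees, then use Castelnuovo--Mumford regularity of $\mathcal I_D$ to place $D$ on a smooth degree-$d$ surface $X^{(d)}$, whence the isomorphism $H^1(\mathcal O_{X^{(d)}}(tH-D))\cong H^1(\mathcal I_D(t))$ transfers the $\ell$-away property.

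The genuine difference lies in the choice of $D$. The paper takes $D$ to be a reduced member of $|(\ell+1)h_i|$ on a smooth quadric $X^{(2)}\cong\mathbb P^1\times\mathbb P^1$, i.e.\ $\ell+1$ lines in a single ruling; the Rao computation is then just K\"unneth, and the two auxiliary vanishings needed for regularity are read off directly. You instead take $a$ \emph{general} disjoint lines with $a\in\big(\tfrac{(\ell+1)(\ell+2)}{6},\tfrac{(\ell+2)(\ell+3)}{6}\big]$ and invoke the Hartshorne--Hirschowitz maximal-rank theorem to pin down the Rao module. Your route is heavier---it imports a substantial theorem where the paper gets by with K\"unneth on a product of projective lines---but it is more robust: the same template would work with other families of curves in general position. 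Note also that the paper's configuration is genuinely special (for $\ell\ge 4$ the paper uses $\ell+1$ lines, fewer than any $a$ allowed by your bounds, precisely because lines in a ruling fail maximal rank), so the two curves are not the same once $\ell$ grows. Your smoothness step along $D$ is fine since $D$ is a smooth local complete intersection and $\mathcal I_D(d)$ is globally generated; the paper cites \cite{HMR} for exactly this implication.
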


\begin{proof}
We divide the proof of this theorem into two steps:

\textbf{\underline{Step-1}}: We first prove the following claim which can also be thought of as an analogous version of [\cite{Watanabe}, Lemma $5.1$], [\cite{Debu}, Lemma $4.2$],   in the non-ACM situation. Our claim is as follows:

\textbf{\underline{Claim}} : Let $D$ be a reduced curve on a smooth quadric hypersurface $X^{(2)} \subset \mathbb P^3$ such that $\mathcal O_{X^{(2)}}(D)$ is a $\ell$-away ACM line bundle on $X^{(2)}$. Let $d>2$ be an integer such that :\

$(i)$ $h^0(\mathcal O_{X^{(2)}}(D)(1-d))=0$,\

$(ii)$ $h^1(\mathcal O_{X^{(2)}}(D)(-1-d))=0$.\

Then there exists a smooth hypersurface $X^{(d)} \subset \mathbb P^3$ of degree $d$ containing $D$ such that $\mathcal O_{X^{(d)}}(D)$ is an initialized and $\ell$-away ACM line bundle on $X^{(d)}$.\

\textbf{\underline{Proof of the Claim}} : Let $D$ be a reduced curve on a smooth quadric hypersurface $X^{(2)} \subset \mathbb P^3$ such that $\mathcal O_{X^{(2)}}(D)$ is a $\ell$-away ACM line bundle on $X^{(2)}$. Let $d>2$ be an integer such that conditions $(i)$ and $(ii)$ are satisfied. If we can show that $\mathcal I_D(d)$ is globally generated, then by [\cite{HMR} Proposition $5.1$ and Remark $5.2$]\footnote{Also see [\cite{Watanabe2}, Theorem $4.1$, Theorem $4.2$.]},  there exists a smooth hypersurface $X^{(d)} \subset \mathbb P^3$ of degree $d$ containing $D$. Thus by Castelnuovo-Mumford regularity criteria, it is enough to show that $\mathcal I_D(d)$ is $0$-regular i.e. $h^1(\mathcal I_D(d-1))=0, h^2(\mathcal I_D(d-2))=0$ and $h^3(\mathcal I_D(d-3))=0$. This can be achieved as follows: Since $D \subset X^{(2)}$, one can consider the following S.E.S
\begin{align}\label{41}
	0 \to \mathcal O_{\mathbb P^3}(-2) \to \mathcal I_D \to \mathcal O_{X^{(2)}}(-D) \to 0.
\end{align}
Tensorizing the S.E.S \ref{41} by $\mathcal O_{\mathbb P^3}(d-1)$, taking cohomology and using assumption $(ii)$, one obtains $h^1(\mathcal I_D(d-1))=0$. Tensorizing the S.E.S \ref{41} by $\mathcal O_{\mathbb P^3}(d-2)$, taking cohomology and using assumption $(i)$, one obtains $h^2(\mathcal I_D(d-2))=0$. Finally, Tensorizing the S.E.S \ref{41} by $\mathcal O_{\mathbb P^3}(d-3)$ and taking cohomology, one obtains $h^3(\mathcal I_D(d-3))=0$.\

Next, we  show that $\mathcal O_{X^{(d)}}(D)$ is $\ell$-away ACM. By assumption, $\mathcal O_{X^{(2)}}(D)$ is $\ell$-away ACM on $X^{(2)}$ and hence so is $\mathcal O_{X^{(2)}}(-D)$. Tensorizing the S.E.S \ref{41} by $\mathcal O_{\mathbb P^3}(m)$ and taking cohomology, we see that $h^1(\mathcal I_D(m))= h^1( \mathcal O_{X^{(2)}}(-D)(m)) \neq 0$ for exactly $\ell$ many integers $m$. Since $D \subset X^{(d)}$, one can consider the following S.E.S
\begin{align}\label{42}
	0 \to \mathcal O_{\mathbb P^3}(-d) \to \mathcal I_D \to \mathcal O_{X^{(d)}}(-D) \to 0.
\end{align}
Tensorizing the S.E.S \ref{42} by $\mathcal O_{\mathbb P^3}(m)$ and taking cohomology,  we see that $h^1( \mathcal O_{X^{(d)}}(-D)(m))=h^1(\mathcal I_D(m)) \neq0$ for exactly $\ell$ many integers $m$. This means $\mathcal O_{X^{(d)}}(-D)$ is $\ell$-away ACM on $X^{(d)}$ and hence so is $\mathcal O_{X^{(d)}}(D)$.\

Next, we  show that $\mathcal O_{X^{(d)}}(D)$ is initialized. Towards this, we first show that  $\mathcal O_{X^{(d)}}(D)$  is effective. Note that by Serre duality it is equivalent to show that $h^2(\mathcal O_{X^{(d)}}(-D)(d-4) ) \neq 0$. Tensorizing the S.E.S \ref{42} by $\mathcal O_{\mathbb P^3}(d-4)$ and taking cohomology, we see that $h^2(\mathcal O_{X^{(d)}}(-D)(d-4) ) + h^3(\mathcal I_D(d-4)) \geq 1$. Tensorizing the S.E.S \ref{41} $\mathcal O_{\mathbb P^3}(d-4)$ and taking cohomology, we have $h^3(\mathcal I_D(d-4))=0$ and we are done.\

Now we are only left to show that $h^0(\mathcal O_{X^{(d)}}(D)(-1) ) = 0$. Note that by Serre duality, this is equivalent to show that $h^2(\mathcal O_{X^{(d)}}(-D)(d-3) ) = 0$. Tensorizing the S.E.S \ref{42} by $\mathcal O_{\mathbb P^3}(d-3)$ and taking cohomology, we see that $h^2(\mathcal O_{X^{(d)}}(-D)(d-3)) = h^2(\mathcal I_D(d-3))$. Then tensorizing the S.E.S \ref{41} $\mathcal O_{\mathbb P^3}(d-3)$, taking cohomology and using the assumption $(i)$, we have $h^2(\mathcal I_D(d-3))=0$ and we are through.\

\textbf{\underline{Step-2}}: Now that we have established the claim in step-$1$, we aim to find a reduced curve $D$ on a smooth quadric hypersurface $X^{(2)} \subset \mathbb P^3$ such that $\mathcal O_{X^{(2)}}(D)$ is a $\ell$-away ACM line bundle on $X^{(2)}$ and satisfies the conditions of the above claim. Identify $X^{(2)} \cong \mathbb P^1 \times \mathbb P^1$ with the fundamental divisor $h=h_1+h_2$ as in [\cite{GG}, section $4$]. Then by [\cite{GG}, Theorem $4.1$], for each $\ell \geq 1$, $\mathcal O_{X^{(2)}}((l+1)h_1), \mathcal O_{X^{(2)}}((l+1)h_2)$ are the only initialized and $\ell$-away ACM line bundles on $X^{(2)}$. Let $D \in |\mathcal O_{X^{(2)}}((\ell+1)h_i)|, i \in \{1,2\}$ be a reduced curve. Then $\mathcal O_{X^{(2)}}(D)$ is initialized and $\ell$-away ACM on $X^{(2)}$.  Note that if $d>2$ and $ d > \ell$, then by K\"{u}nneth's theorem, $D$ satisfies the conditions $(i)$ and $(ii)$ of the claim. Therefore, for a given $\ell \geq 1$, if $d>2$ and $d>\ell$, then  there exists a smooth hypersurface $X^{(d)} \subset \mathbb P^3$ of degree $d$ containing a divisor $D$ such that $\mathcal O_{X^{(d)}}(D)$ is an initialized and $\ell$-away ACM line bundle on $X^{(d)}$.\

\end{proof}

\end{document}